\numberwithin{equation}{section}
     \newtheorem{thm}{Theorem}[section]
     \newtheorem{cor}[thm]{Corollary}
     \newtheorem{prop}[thm]{Proposition}
     \newtheorem{lem}[thm]{Lemma}
\theoremstyle{definition}
      \newtheorem{defn}{Definition}[section]
     \newtheorem{exmp}{Example}[section]
\theoremstyle{remark}
     \newtheorem{rem}{Remark}[section]
\newcommand{\R}{\mathbb{R}}
\newcommand{\Z}{\mathbb{Z}}
\newcommand{\cL}{\mathcal{L}}
\newcommand{\cW}{\mathcal{W}}
\newcommand{\supp}{\operatorname{supp}}
\newcommand{\loc}{\mathrm{loc}}
\newcommand{\comp}{\mathrm{comp}}
\newcommand{\BMO}{\mathrm{BMO}}
\newcommand{\CMO}{\mathrm{CMO}}
\newcommand{\CBMO}{\mathrm{CBMO}}
\newcommand{\Lip}{\mathrm{Lip}}
\newcommand{\dB}{\dot{B}}
\newcommand{\tI}{\tilde{I}}
\newcommand{\tT}{\tilde{T}}
\newcommand{\tp}{\tilde{p}}
\newcommand{\tw}{\tilde{w}}
\newcommand{\iTheta}{{\it\Theta}_*}
\newcommand{\dmn}{U}
\newcommand{\Bwu}{B_{w}^{u}(E)}
\newcommand{\dBwu}{\dot{B}_{w}^{u}(E)}
\newcommand{\EQ}{\{E(Q_r)\}}
\newcommand{\ls}{\lesssim}
\newcommand{\sqc}[1]{\{#1_j\}_{j=1}^{\infty}}    
\newcommand{\msckw}{%
\footnotetext{\hspace{-4mm} 
2000 {\it Mathematics Subject Classification}. 
Primary 42B35, 46B70; Secondary 46E30, 46E35, 42B20, 42B25.
\endgraf
{\it Key words and phrases}. 
interpolation,
Morrey spaces, Campanato space, BMO, $B^p$-space, CMO,
Hardy-Littlewood maximal operator,
singular integral operator,
fractional integral operator, 
\endgraf
Eiichi Nakai (Corresponding author):
Department of Mathematics,
Ibaraki University,
Mito, Ibaraki 310-8512, Japan \ \
E-mail: enakai@mx.ibaraki.ac.jp \ Tel: (+081)29-228-8346
\endgraf
Takuya Sobukawa:
Global Education Center,
Waseda University,
Nishi-Waseda, Shinjuku-ku, Tokyo 169-8050, Japan \ \
E-mail: sobu@waseda.jp 
\endgraf
}}
\title{$B_w^u$-function spaces and their interpolation \msckw}
\author{Eiichi Nakai and Takuya Sobukawa}
\date{October 20, 2014}
\begin{document}

\baselineskip=18pt

\maketitle

\begin{abstract}
We introduce $B_w^u$-function spaces which unify 
Lebesgue, Morrey-Campanato, Lipschitz, $B^p$, $\CMO$, 
local Morrey-type spaces,
etc.,
and investigate the interpolation property of $B_w^u$-function spaces.
We also apply it to the boundedness of linear and sublinear operators,
for example, 
the Hardy-Littlewood maximal and fractional maximal operators,
singular and fractional integral operators with rough kernel,
the Littlewood-Paley operator, Marcinkiewicz operator,
and so on.
\end{abstract}

\section{Introduction}\label{s:intro}

The purpose of this paper 
is to introduce $B_w^u$-function spaces which unify many function spaces,
Lebesgue, Morrey-Campanato, Lipschitz, $B^p$, $\CMO$, 
local Morrey-type spaces,
etc.
We investigate the interpolation property of $B_w^u$-function spaces
and apply it to the boundedness of linear and sublinear operators,
for example, the Hardy-Littlewood maximal operator,
singular and fractional integral operators,
and so on,
which contains previous results and extends them to $B_w^u$-function spaces.

Let $\R^n$ be the $n$-dimensional Euclidean space.
We denote by $Q_r$ 
the open cube centered at the origin and sidelength $2r$, or 
the open ball centered at the origin and of radius $r$,
that is, 
\begin{equation*}
 Q_r
 =\left\{y=(y_1,y_2,\cdots,y_n)\in\R^n:\max_{1\le i\le n} |y_i|<r
  \right\}
 \quad\text{or}\quad
 Q_r=\{y\in\R^n:|y|<r\}.
\end{equation*}

For each $r\in(0,\infty)$, 
let $E(Q_r)$ be a function space on $Q_r$ with quasi-norm $\|\cdot\|_{E(Q_r)}$.
Let $E_{Q}(\R^n)$ be the set of all measurable functions $f$ on $\R^n$
such that $f|_{Q_r}\in E(Q_r)$ for all $r>0$.
We assume the following {\it restriction property}\,:
\begin{multline}\label{restriction}
 f|_{Q_r}\in E(Q_r) \ \text{and} \ 0<t<r<\infty
\\ \Rightarrow 
 f|_{Q_t}\in E(Q_t) \ \text{and} \ \|f\|_{E(Q_t)}\le C_E\|f\|_{E(Q_r)},
\end{multline}
where $C_E$ is a positive constant independent of $r$, $t$ and $f$.
For example, $E=L^p$, $\Lip_{\alpha}$, $\BMO$, etc.
Then, for a weight function $w:(0,\infty)\to(0,\infty)$ 
and an exponent $u\in(0,\infty]$, 
we define function spaces $\Bwu=B_{w}^{u}(E)(\R^n)$ and $\dBwu=\dB_{w}^{u}(E)(\R^n)$ 
as the sets of all functions $f\in E_Q(\R^n)$ such that
$\|f\|_{B_{w}^{u}(E)}<\infty$ and $\|f\|_{\dB_{w}^{u}(E)}<\infty$, 
respectively,
where
\begin{align*}
 \|f\|_{B_{w}^{u}(E)}&=\|w(r)\|f\|_{E(Q_r)}\|_{L^{u}([1,\infty),dr/r)},
\\
 \|f\|_{\dB_{w}^{u}(E)}&=\|w(r)\|f\|_{E(Q_r)}\|_{L^{u}((0,\infty),dr/r)}.
\end{align*}
In the above 
we abbreviated $\|f|_{Q_r}\|_{E(Q_r)}$ to $\|f\|_{E(Q_r)}$.

In this paper we always assume that $w$ has some decreasingness condition.
Note that, 
if $w(r)\to\infty$ as $r\to\infty$, then $\Bwu=\dBwu=\{0\}$.
In particular, if $w(r)=r^{-\sigma}$, $\sigma\ge0$ and $u=\infty$, we denote 
$B_{w}^{u}(E)(\R^n)$ and $\dB_{w}^{u}(E)(\R^n)$ 
by
$B_{\sigma}(E)(\R^n)$ and $\dB_{\sigma}(E)(\R^n)$, respectively,
which were introduced recently by
Komori-Furuya, Matsuoka, Nakai and Sawano~\cite{KoMaNaSa2013RMC}.
These $B_{\sigma}$-function spaces unify several function spaces,
see the following Examples~\ref{exmp:Bp}--\ref{exmp:BsMC}.
Moreover, if $E=L^p$, then
$\dB_{w}^{u}(L^p)(\R^n)$ is the local Morrey-type space
introduced by Burenkov and Guliyev~\cite{BuGu2004},
see Example~\ref{exmp:locMt}.

\begin{exmp}\label{exmp:Bp}
Beurling \cite{Beurling1964} introduced the space $B^p(\R^n)$ 
together with its predual $A^p(\R^n)$ so-called the Beurling algebra.
Later, to extend 
Wiener's ideas \cite{Wiener1930,Wiener1932} 
which describe the behavior of functions at infinity, 
Feichtinger \cite{Feicht1987} gave an equivalent norm on $B^p(\R^n)$, 
which is a special case of norms to describe 
non-homogeneous Herz spaces $K^{\alpha}_{p,r}(\R^n)$ 
introduced in \cite{Herz1968}. 
The function space $B^p(\R^n)$ and its homogeneous version $\dB^p(\R^n)$ 
are characterized by the following norms, respectively:
\begin{equation*}
 \|f\|_{B^p}=\sup_{r\ge1}
  \left(\frac1{|Q_r|}\int_{Q_r}|f(x)|^p\,dx\right)^{1/p} 
\ \text{and} \ 
 \|f\|_{\dB^p}=\sup_{r>0}
  \left(\frac1{|Q_r|}\int_{Q_r}|f(x)|^p\,dx\right)^{1/p},
\end{equation*}
where $|Q_r|$ is the Lebesgue measure of $Q_r$.
In this case 
$B^p(\R^n)=B_{\sigma}(L^p)(\R^n)$
and $\dB^p(\R^n)=\dB_{\sigma}(L^p)(\R^n)$ with $\sigma=n/p$.
\end{exmp}

\begin{exmp}\label{exmp:CMO}
Chen and Lau \cite{ChenLau1989} and Garc\'{\i}a-Cuerva \cite{Gar1989} 
introduced the central mean oscillation space $\CMO^{p}(\R^n)$
with the norm
\begin{equation*}
 \|f\|_{\CMO^{p}}=\sup_{r\ge1}
  \left(\frac1{|Q_r|}\int_{Q_r}|f(x)-f_{Q_r}|^p\,dx\right)^{1/p}, 
\end{equation*}
and 
Lu and Yang \cite{LuYang1992Studia,LuYang1995Approx}
introduced the central bounded mean oscillation space $\CBMO^{p}(\R^n)$
with the norm
\begin{equation*}
 \|f\|_{\CBMO^{p}}=\sup_{r>0}
  \left(\frac1{|Q_r|}\int_{Q_r}|f(x)-f_{Q_r}|^p\,dx\right)^{1/p},
\end{equation*}
where $f_{Q_r}$ is the mean value of $f$ on $Q_r$.
Then $\CMO^{p}(\R^n)$ and $\CBMO^{p}(\R^n)$ are expressed by 
$B_{\sigma}(E)(\R^n)$ and $\dB_{\sigma}(E)(\R^n)$, respectively, 
with $E=L^p$ (modulo constants), 
$
 \|f\|_{E(Q_r)}=\|f-f_{Q_r}\|_{L^p(Q_r)}
$
and $\sigma=n/p$.
\end{exmp}

\begin{exmp}\label{exmp:Bpl}
Garc\'{\i}a-Cuerva and Herrero \cite{GarHer1994} and
Alvarez, Guzm\'an-Partida and Lakey \cite{AlvGuzLak2000}
introduced 
the non-homogeneous central Morrey space $B^{p,\lambda}(\R^n)$,
the central Morrey space $\dB^{p,\lambda}(\R^n)$,
the $\lambda$-central mean oscillation space $\CMO^{p,\lambda}(\R^n)$ 
and the $\lambda$-central bounded mean oscillation space $\CBMO^{p,\lambda}(\R^n)$
as an extension of $B^p(\R^n)$, $\dB^p(\R^n)$, $\CMO^p(\R^n)$ and $\CBMO^p(\R^n)$,
respectively, with the following norms:
\begin{align*}
 \|f\|_{B^{p,\lambda}}&=\sup_{r\ge1}
  \frac1{r^{\lambda}}\left(\frac1{|Q_r|}\int_{Q_r}|f(x)|^p\,dx\right)^{1/p}, 
 \\
 \|f\|_{\dB^{p,\lambda}}&=\sup_{r>0}
  \frac1{r^{\lambda}}\left(\frac1{|Q_r|}\int_{Q_r}|f(x)|^p\,dx\right)^{1/p}, 
 \\
 \|f\|_{\CMO^{p,\lambda}}&=\sup_{r\ge1}
  \frac1{r^{\lambda}}\left(\frac1{|Q_r|}\int_{Q_r}|f(x)-f_{Q_r}|^p\,dx\right)^{1/p}
 \ \text{and}
 \\
 \|f\|_{\CBMO^{p,\lambda}}&=\sup_{r>0}
  \frac1{r^{\lambda}}\left(\frac1{|Q_r|}\int_{Q_r}|f(x)-f_{Q_r}|^p\,dx\right)^{1/p}. 
\end{align*}
Then these spaces 
are expressed by $B_{\sigma}(E)(\R^n)$ and $\dB_{\sigma}(E)(\R^n)$
with $E=L^p$ (or $E=L^p$ (modulo constants)) 
and $\sigma=n/p+\lambda$.
\end{exmp}

\begin{exmp}\label{exmp:BsMC}
If
$E=L_{p,\lambda}$ (Morrey space) or $\cL_{p,\lambda}$ (Campanato space),
then the function spaces
$B_{\sigma}(L_{p,\lambda})(\R^n)$, $\dB_{\sigma}(L_{p,\lambda})(\R^n)$,
$B_{\sigma}(\cL_{p,\lambda})(\R^n)$ and $\dB_{\sigma}(\cL_{p,\lambda})(\R^n)$
unify the function spaces in above examples 
and the usual Morrey-Campanato and Lipschitz spaces.
Actually, 
if $\lambda=-n/p$, then $L_{p,\lambda}=L^p$.
If $\sigma=0$, then
$B_{0}(L_{p,\lambda})(\R^n)=\dB_{0}(L_{p,\lambda})(\R^n)
=L_{p,\lambda}(\R^n)$ and
$B_{0}(\cL_{p,\lambda})(\R^n)=\dB_{0}(\cL_{p,\lambda})(\R^n)
=\cL_{p,\lambda}(\R^n)$.
If $\lambda=0$, 
then $\cL_{p,\lambda}(\R^n)=\BMO(\R^n)$ for all $p\in[1,\infty)$
(John and Nirenberg~\cite{JohnNirenberg1961}).
If $\lambda=\alpha\in(0,1]$, 
then $\cL_{p,\lambda}(\R^n)=\Lip_{\alpha}(\R^n)$ for all $p\in[1,\infty)$ 
(Campanato~\cite{Campanato1963}, Meyers~\cite{Meyers1964}, Spanne~\cite{Spanne1965}).
$B_{\sigma}$-Morrey-Campanato spaces were investigated in 
\cite{KomoriMatsu2010Maratea,KoMaNaSa2013RMC,KoMaNaSa2013JFSA,MatsuNakai2011}.
For the definitions of $L_{p,\lambda}$ and $\cL_{p,\lambda}$, 
see Subsection~\ref{ss:MCL}.
\end{exmp}

\begin{exmp}\label{exmp:locMt}
Burenkov and Guliyev~\cite{BuGu2004}
introduced local Morrey-type space $LM_{p\theta,w}(\R^n)$
with the (quasi-)norm
\begin{equation*}
 \|f\|_{LM_{p\theta,w}}
 =
 \|w(r)\|f\|_{L^p(Q_r)}\|_{L^{\theta}(0,\infty)},
\end{equation*}
and investigated the boundedness of 
the Hardy-Littlewood maximal operator.
$LM_{p\theta,\tw}(\R^n)$ is expressed by $\dB_{w}^{u}(E)(\R^n)$
with $E=L^p$ and $\tw(r)=w(r)/r$.
For recent progress of local Morrey-type spaces, 
see \cite{Burenkov2012,Burenkov2013}.
See also \cite{BuDaNu2013,BuNu2009} for interpolation spaces 
for local Morrey-type spaces.
\end{exmp}

In this paper we investigate the interpolation property of $B_w^u$-function spaces
\begin{align*}
 (\dB_{w_0}^{u_0}(E)(\R^n),\dB_{w_1}^{u_1}(E)(\R^n))_{\theta,u}
&=\dB_{w}^{u}(E)(\R^n).
\end{align*}
Moreover, we give the interpolation property with $w=w_0\,\Theta(w_1/w_0)$
for some pseudoconcave function $\Theta$
(Theorem~\ref{thm:IP}).
To do this we assume that,
for any $f\in E_Q(\R^n)$
and for any $r>0$, 
there exists a decomposition $f=f_0^r+f_1^r$ such that
\begin{equation}\label{decomposition0}
 \|f_0^r\|_{E(Q_t)}\le
\begin{cases}
 C_E\|f\|_{E(Q_t)} & (0<t<r), \\
 C_E\|f\|_{E(Q_{ar})} & (r\le t<\infty),
\end{cases}
\end{equation}
and
\begin{equation}\label{decomposition1}
 \|f_1^r\|_{E(Q_t)}\le
\begin{cases}
 0 & (0<t<cr), \\
 C_E\|f\|_{E(Q_{bt})} & (cr\le t<\infty),
\end{cases}
\end{equation}
where $C_E,a,b,c$ are positive constants independent of $r$, $t$ and $f$.
We call the {\it decomposition property} such property.
For example, Lebesgue, Orlicz, Lorentz and Morrey spaces 
have the decomposition property.
Actually, $f=f\chi_r+f(1-\chi_r)$ is the desired decomposition,
where $\chi_{r}$ is the characteristic function of $Q_r$.
Moreover, we prove that Campanato and Lipschitz spaces also have 
the decomposition property
(Proposition~\ref{prop:decomp}).

As applications of the interpolation property,
we also give the boundedness of linear and sublinear operators.
It is known that the Hardy-Littlewood maximal operator,
fractional maximal operators, singular and fractional integral operators
are bounded on $B_{\sigma}$-Morrey-Campanato spaces, 
see \cite{KomoriMatsu2010Maratea,KoMaNaSa2013RMC,KoMaNaSa2013JFSA,MatsuNakai2011}.
Using these boundedness, we get the boundedness of these operators on
$B_w^u(L_{p,\lambda})$,$\dB_w^u(L_{p,\lambda})$,
$B_w^u(\cL_{p,\lambda})$ and $\dB_w^u(\cL_{p,\lambda})$,
which are also generalization of the results 
on the local Morrey-type spaces $LM_{pu,w}(\R^n)$.

We give notation and definitions in Section~\ref{s:def} 
to state main results in Section~\ref{s:main}.
We prove them in Section~\ref{s:proof}
and give applications for the boundedness of linear and sublinear operators
in Section~\ref{s:bdd}.

\section{Notation and definitions}\label{s:def}

In this section 
we give several notation and definitions
to state main result.

A function $w:(0,\infty)\to(0,\infty)$ is said to be 
almost increasing (almost decreasing)
if there exists a constant $C>0$ such that 
\begin{equation}\label{almost} 
     w(r)\le Cw(s) \quad (w(r)\ge Cw(s)) 
     \quad\text{for}\quad r\le s.
\end{equation} 
A function $w:(0,\infty)\to(0,\infty)$ is said to  
satisfy the doubling condition
if there exists a constant $C>0$ such that 
\begin{equation}\label{double} 
     C^{-1}\le\frac{w(r)}{w(s)}\le C 
     \quad\text{for}\quad \frac{1}{2}\le\frac{r}{s}\le 2.
\end{equation} 
For functions $w_1, w_2:(0,\infty)\to(0,\infty)$,
we write $w_1\sim w_2$ 
if there exists a constant $C>0$ such that
\begin{equation}\label{equiv} 
     C^{-1}\le\frac{w_1(r)}{w_2(r)}\le C 
     \quad\text{for}\quad r>0.
\end{equation} 

Note that, if $w_1\sim w_2$, then 
$B_{w_1}^u(E)=B_{w_2}^u(E)$ and 
$\dB_{w_1}^u(E)=\dB_{w_2}^u(E)$
with equivalent norms.
Note also that, if $w$ satisfies the doubling condition,
then,
for any $\eta>0$,
$\|w(r)\|f\|_{E(Q_r)}\|_{L^{u}([\eta,\infty),dr/r)}$
and
$\|w(r)\|f\|_{E(Q_r)}\|_{L^{u}([1,\infty),dr/r)}$
are equivalent each other,
by the restriction property of $\EQ$. 

We denote by $\cW^{u}$, $u\in(0,\infty]$, the set of 
all almost decreasing functions $w:(0,\infty)\to(0,\infty)$
such that $w$ satisfies the doubling condition and
$w\in L^u([1,\infty),dr/r)$.
Note that, if $w\notin L^u([1,\infty),dr/r)$, then
$\Bwu=\dBwu=\{0\}$.
We also denote by $\cW^{*}$ the set of 
all almost decreasing functions $w:(0,\infty)\to(0,\infty)$
such that
$w$ satisfies the doubling condition and 
\begin{equation}\label{int*}
 \int_r^{\infty}w(t)\,\frac{dt}t\le Cw(r),
 \quad r\in(0,\infty),
\end{equation}
where $C$ is a positive constant independent of $r$.
If $w$ satisfies the doubling condition,
then
\begin{equation*}
 w(r)\le C\int_r^{\infty}w(t)\,\frac{dt}t,
 \quad r\in(0,\infty),
\end{equation*}
for some positive constant $C$ independent of $r$,
that is,
the condition \eqref{int*} implies that $w(r)\sim\int_0^rw(t)\,dt/t$.
Then
the condition \eqref{int*} is equivalent that
there exists a positive constant $\epsilon$ such that
$w(r)r^{\epsilon}$ is almost decreasing,
see \cite[Lemma~7.1]{Nakai2008AMSin}.
Therefore, we have the relation
\begin{equation*}
 \cW^*\subset\cW^{u_1}\subset\cW^{u_2}\subset\cW^{\infty},
 \quad 0<u_1<u_2<\infty.
\end{equation*}
Moreover, if $w$ satisfies the doubling condition, 
then there exists a positive constant $\nu$ such that
$w(r)r^{\nu}$ is almost increasing.
Actually, 
take $\nu$ such that $C\le2^{\nu}$, 
here $C$ is the doubling constant in \eqref{double}.
Then, for $r\le s$, 
choosing an integer $k$ such that $2^{k-1}r\le s<2^{k}r$,
we have
\begin{equation*}
 w(r)r^{\nu}
 \le
 C^{k}w(s)r^{\nu}
 \le
 2^{\nu k}w(s)(s/2^{k-1})^{\nu}
 =
 2^{\nu}w(s)s^{\nu}.
\end{equation*}

We say that a function $\Theta:(0,\infty)\to(0,\infty)$
is pseudoconcave if there exists a concave function 
$\tilde\Theta:(0,\infty)\to(0,\infty)$
such that $\Theta\sim\tilde\Theta$.
All pseudoconcave functions satisfy the doubling condition.
Let $\iTheta$ be the set of all functions $\Theta:(0,\infty)\to(0,\infty)$
such that, 
for some constants $C\in(0,\infty)$ and $\epsilon,\epsilon'\in(0,1)$, 
\begin{equation*}
 \frac{\Theta(tr)}{\Theta(r)}
 \le C
 \max(t^{\epsilon},t^{\epsilon'})
 \quad\text{for all}\ r,t\in(0,\infty).
\end{equation*}
Then all functions $\Theta\in\iTheta$ are pseudoconcave,
see \cite{Peetre1968}.
Note that
$\Theta\in\iTheta$
if and only if 
there exist constants $\epsilon,\epsilon'\in(0,1)$ 
such that
$\Theta(r)r^{-\epsilon}$ is almost increasing and that
$\Theta(r)r^{-\epsilon'}$ is almost decreasing.
In this case $\epsilon\le\epsilon'$.

We consider a couple $(A_0,A_1)=(\dB_{w_0}^{u_0}(E),\dB_{w_1}^{u_1}(E))$ or
$(B_{w_0}^{u_0}(E),B_{w_1}^{u_1}(E))$.
For $f\in A_0+A_1$,
let 
\begin{equation*}
 K(r,f;A_0,A_1)
 =
 \inf_{f=f_0+f_1}\left(\|f_0\|_{A_0}+r\|f_0\|_{A_1}\right)
 \quad (0<r<\infty),
\end{equation*}
where the infimum is taken over all decompositions $f=f_0+f_1$ in $A_0+A_1$.
For a pseudoconcave function $\Theta$ and $u\in(0,\infty]$, let
\begin{equation*}
 (A_0,A_1,\Theta)_{u}
 =
 \left\{f\in E_Q(\R^n):
 \|\Theta(r^{-1})K(r,f;A_0,A_1)\|_{L^u((0,\infty),dr/r)}<\infty
 \right\}.
\end{equation*}
We also consider the following:
\begin{equation*}
 (A_0,A_1,\Theta)_{u,\,[1,\infty)}
 =
 \left\{f\in E_Q(\R^n):
 \|\Theta(r^{-1})K(r,f;A_0,A_1)\|_{L^u([1,\infty),dr/r)}<\infty
 \right\}.
\end{equation*}
In particular, for $\Theta(r)=r^{\theta}$, $\theta\in(0,1)$, we denote 
$(A_0,A_1,\Theta)_{u}$ and
$(A_0,A_1,\Theta)_{u,\,[1,\infty)}$
by $(A_0,A_1)_{\theta,u}$ and
$(A_0,A_1)_{\theta,u,\,[1,\infty)}$,
respectively.


\section{Main results}\label{s:main}

In this section we investigate the interpolation properties of
$\dBwu=\dB_{w}^{u}(E)(\R^n)$ and $\Bwu=B_{w}^{u}(E)(\R^n)$,
using the restriction and decomposition properties 
\eqref{restriction}, \eqref{decomposition0} and \eqref{decomposition1}
of $\big\{(E(Q_r),\|\cdot\|_{E(Q_r)})\big\}_{0<r<\infty}$.

\subsection{Interpolation}\label{ss:Interpo}

The main theorem is the following:

\begin{thm}\label{thm:IP}
Assume that a family $\big\{(E(Q_r),\|\cdot\|_{E(Q_r)})\big\}_{0<r<\infty}$ 
has the restriction and decomposition properties.
Let $u_0,u_1,u\in(0,\infty]$, 
$w_0,w_1\in\cW^{\infty}$,
$\Theta\in\iTheta$
and 
\begin{equation*}
 w=w_0\,\Theta(w_1/w_0).
\end{equation*}
For each $i=0,1$,
if $\min(u_i,u)<\infty$, then we assume that $w_i\in\cW^*$.
Assume also that,
for some positive constant $\epsilon$,
$(w_0(r)/w_1(r))r^{-\epsilon}$ is almost increasing,
or,
$(w_1(r)/w_0(r))r^{-\epsilon}$ is almost increasing.
Then
\begin{equation*}
 (\dB_{w_0}^{u_0}(E)(\R^n),\dB_{w_1}^{u_1}(E)(\R^n),\Theta)_{u}
 =
 \dB_{w}^{u}(E)(\R^n),
\end{equation*}
and
\begin{equation*}
 (B_{w_0}^{u_0}(E)(\R^n),B_{w_1}^{u_1}(E)(\R^n),\Theta)_{u,\,[1,\infty)}
 =
 B_{w}^{u}(E)(\R^n).
\end{equation*}
\end{thm}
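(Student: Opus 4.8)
The plan is to transfer the whole problem to the scalar function $g(r)=\|f\|_{E(Q_r)}$, which is almost increasing by the restriction property \eqref{restriction} and through which both quasinorms factor, since $\|f\|_{\dB_{w_i}^{u_i}(E)}=\|w_i\,g\|_{L^{u_i}((0,\infty),dr/r)}$. I would prove the homogeneous identity in full and then obtain the non–homogeneous one by running the same argument with every integral restricted to $[1,\infty)$, the doubling condition absorbing the truncations (as in the remark after \eqref{equiv}). The two inclusions are handled separately, and the almost–monotonicity hypothesis on $w_0/w_1$ enters both through the substitution $s=w_0(\rho)/w_1(\rho)$, which it turns into a legitimate (up to constants) change of variables between the $K$–functional parameter $s$ and a splitting radius $\rho$.

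For the inclusion $(A_0,A_1,\Theta)_u\subset\dB_w^u(E)$ I would first record the pointwise recovery $\|h\|_{E(Q_r)}\lesssim \|h\|_{A_i}/w_i(r)$ for $h\in A_i$: for $u_i=\infty$ this is immediate, and for $u_i<\infty$ it follows by integrating over $[r,2r]$ and using that $\|h\|_{E(Q_t)}$ is almost increasing and $w_i$ doubling. Given any decomposition $f=f_0+f_1$, the quasi–triangle inequality for $E(Q_r)$ together with this bound yields
\[
 w(r)\,g(r)\lesssim \frac{w(r)}{w_0(r)}\|f_0\|_{A_0}+\frac{w(r)}{w_1(r)}\|f_1\|_{A_1}
 =\Theta\!\big(s^{-1}\big)\Big(\|f_0\|_{A_0}+s\,\|f_1\|_{A_1}\Big),
\]
where $s=w_0(r)/w_1(r)$ and I have used $w=w_0\,\Theta(w_1/w_0)$ so that $w/w_0=\Theta(s^{-1})$ and $w/w_1=s\,\Theta(s^{-1})$. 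Taking the infimum over decompositions gives $w(r)g(r)\lesssim\Theta(s^{-1})K(s,f)$; raising to the power $u$, integrating against $dr/r$, and substituting $s=w_0(r)/w_1(r)$ produces $\|f\|_{\dB_w^u(E)}\lesssim\|\Theta(s^{-1})K(s,f)\|_{L^u(ds/s)}$. This direction uses only the restriction property, the doubling of the $w_i$, and the pseudoconcavity (hence doubling) of $\Theta$.

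The opposite inclusion is where the decomposition property does its work. Fixing $s$ and letting $\rho=\rho(s)$ solve $w_0(\rho)/w_1(\rho)\sim s$, I would apply \eqref{decomposition0}–\eqref{decomposition1} at radius $\rho$ to split $f=f_0^\rho+f_1^\rho$. Estimating the two quasinorms, using $w_i\in\cW^*$ to sum the tail $\int_\rho^\infty w_0^{u_0}\,dt/t\lesssim w_0(\rho)^{u_0}$ and letting the bounded shifts $a\rho,bt,c\rho$ be absorbed by the doubling of the $w_i$ after a change of variables, gives
\[
 K(s,f)\lesssim\Big(\int_0^{\rho}(w_0 g)^{u_0}\tfrac{dt}{t}\Big)^{1/u_0}
 +w_0(\rho)\,g(a\rho)
 +s\Big(\int_{\rho}^{\infty}(w_1 g)^{u_1}\tfrac{dt}{t}\Big)^{1/u_1}.
\]
Multiplying by $\Theta(s^{-1})$ and integrating in $ds/s$, the middle term becomes $\sim\|w\,g\|_{L^u}=\|f\|_{\dB_w^u(E)}$ directly, via $\Theta(s^{-1})w_0(\rho)=w(\rho)$ and the doubling of $w$, while the two integral terms are to be controlled by $\|w\,g\|_{L^u}$ through a pair of Hardy–type inequalities.

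The step I expect to be the genuine obstacle is exactly these Hardy inequalities: after the substitution $s=w_0(\rho)/w_1(\rho)$ one must show that the averaging operator $\rho\mapsto\Theta(s(\rho)^{-1})\big(\int_0^\rho(w_0g)^{u_0}\,dt/t\big)^{1/u_0}$ and the dual tail operator built from $w_1$ are bounded on $L^u(dr/r)$ with target weight $w$. This is precisely where the full strength of $\Theta\in\iTheta$ is consumed: the almost–monotonicity of $\Theta(r)r^{-\epsilon}$ and of $\Theta(r)r^{-\epsilon'}$ with $0<\epsilon\le\epsilon'<1$ supplies the upper and lower power gaps that render both Hardy operators bounded, and the membership $w_i\in\cW^*$ (assumed whenever $\min(u_i,u)<\infty$) guarantees convergence of the inner integrals and tails. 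Combining the two inclusions then gives $\|f\|_{(A_0,A_1,\Theta)_u}\sim\|f\|_{\dB_w^u(E)}$, and the non–homogeneous statement follows verbatim with all integrals over $[1,\infty)$.
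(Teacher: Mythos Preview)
Your proposal is correct and follows essentially the same route as the paper's proof: the pointwise bound $w(r)\|f\|_{E(Q_r)}\lesssim\Theta(w_*(r)^{-1})K(w_*(r),f)$ for the forward inclusion, and the decomposition $f=f_0^\rho+f_1^\rho$ at the radius solving $w_*(\rho)\sim s$ followed by weighted Hardy inequalities for the reverse one. The paper packages the change of variables $s\leftrightarrow w_*(\rho)$ as a separate lemma (Lemma~\ref{lem:phi}), carries out the Hardy step via Muckenhoupt's criterion (Theorem~\ref{thm:Muck}, Lemma~\ref{lem:w*}), and---one point you leave implicit---first reduces to $\max(u_0,u_1)\le u$ by replacing $u_i$ with $\min(u_i,u)$ and using the monotone embedding $\dB_{w_i}^{\min(u_i,u)}(E)\subset\dB_{w_i}^{u_i}(E)$; this is what makes the Hardy exponents $u/u_i\ge1$ and is needed before your final paragraph goes through.
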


\begin{rem}
The function $w=w_0\,\Theta(w_1/w_0)$ in Theorem~\ref{thm:IP}
is in $\cW^{\infty}$,
since
the function $R(r,s)=r\,\Theta(s/r)$ is almost increasing
with respect to both $r$ and $s$.
For properties of pseudoconcave functions, see \cite{GusPee1977}.
If $(w_0(r)/w_1(r))r^{-\epsilon}$ is almost increasing,
then 
$w_1(r)r^{\epsilon}$ is almost decreasing,
that is,
$w_1\in\cW^*$.
Similarly, if $(w_1(r)/w_0(r))r^{-\epsilon}$ is almost increasing,
then 
$w_0\in\cW^*$.
\end{rem}

Take $u_0=u_1=\infty$, $w_0(r)=r^{-\sigma_0}$, $w_1(r)=r^{-\sigma_1}$
in Theorem~\ref{thm:IP}.
Then we have the following:

\begin{cor}\label{cor:IP1}
Assume that a family $\big\{(E(Q_r),\|\cdot\|_{E(Q_r)})\big\}_{0<r<\infty}$ 
has the restriction and decomposition properties.
Let $u\in(0,\infty]$, 
$\sigma_0,\sigma_1\in[0,\infty)$ with $\sigma_0\ne\sigma_1$,
$\Theta\in\iTheta$
and 
\begin{equation}\label{w Theta}
 w(r)=r^{-\sigma_0}\,\Theta(r^{\sigma_0-\sigma_1}).
\end{equation}
If $u<\infty$, we assume that $\sigma_0,\sigma_1\in(0,\infty)$.
Then
\begin{equation*}
 (\dB_{\sigma_0}(E)(\R^n),\dB_{\sigma_1}(E)(\R^n),\Theta)_{u}
 =
 \dB_{w}^{u}(E)(\R^n),
\end{equation*}
and
\begin{equation*}
 (B_{\sigma_0}(E)(\R^n),B_{\sigma_1}(E)(\R^n),\Theta)_{u,\,[1,\infty)}
 =
 B_{w}^{u}(E)(\R^n).
\end{equation*}
\end{cor}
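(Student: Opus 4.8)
The plan is to deduce the corollary directly from Theorem~\ref{thm:IP} by making the indicated specialization $u_0=u_1=\infty$, $w_0(r)=r^{-\sigma_0}$, $w_1(r)=r^{-\sigma_1}$. With these choices the spaces match up tautologically: by the convention fixed in the introduction, $\dB_{\sigma_0}(E)=\dB_{w_0}^{u_0}(E)$ and $\dB_{\sigma_1}(E)=\dB_{w_1}^{u_1}(E)$, and likewise $B_{\sigma_i}(E)=B_{w_i}^{\infty}(E)$ for the non-homogeneous spaces. The whole task therefore reduces to checking that, under the hypotheses of the corollary, every hypothesis of Theorem~\ref{thm:IP} is satisfied; the two conclusions then follow at once.

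First I would verify the membership $w_0,w_1\in\cW^{\infty}$. Since $\sigma_i\ge0$, each $w_i(r)=r^{-\sigma_i}$ is (almost) decreasing, satisfies the doubling condition, and is bounded on $[1,\infty)$, hence belongs to $L^{\infty}([1,\infty),dr/r)$; thus $w_i\in\cW^{\infty}$. Next I would confirm that the weight generated by the theorem is exactly the one in \eqref{w Theta}: computing $w_1/w_0=r^{\sigma_0-\sigma_1}$ gives $w=w_0\,\Theta(w_1/w_0)=r^{-\sigma_0}\Theta(r^{\sigma_0-\sigma_1})$, as required.

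The remaining two conditions are precisely where the corollary's extra assumptions enter. For the $\cW^*$ requirement, note that with $u_0=u_1=\infty$ one has $\min(u_i,u)=u$, so this condition is active only when $u<\infty$; in that case the assumption $\sigma_0,\sigma_1\in(0,\infty)$ is used, for when $\sigma_i>0$ one computes $\int_r^{\infty}t^{-\sigma_i}\,dt/t=r^{-\sigma_i}/\sigma_i$, so \eqref{int*} holds and $w_i\in\cW^*$. For the almost-monotonicity alternative, the assumption $\sigma_0\ne\sigma_1$ is what is needed: since $w_0/w_1=r^{\sigma_1-\sigma_0}$ and $w_1/w_0=r^{\sigma_0-\sigma_1}$, taking $\epsilon=|\sigma_0-\sigma_1|>0$ makes exactly one of $(w_0/w_1)r^{-\epsilon}$, $(w_1/w_0)r^{-\epsilon}$ equal to the constant $1$, which is almost increasing. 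With all hypotheses verified, Theorem~\ref{thm:IP} gives both identities. Because this is a pure specialization there is no genuine obstacle; the only care required is bookkeeping, namely recognizing that the $\cW^*$ condition is vacuous when $u=\infty$ (so that $\sigma_i=0$ is then permitted) and recording the correct $\epsilon$ from $\sigma_0\ne\sigma_1$ in the monotonicity clause.
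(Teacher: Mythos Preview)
Your proof is correct and follows exactly the approach indicated in the paper, which simply states that the corollary is obtained from Theorem~\ref{thm:IP} by taking $u_0=u_1=\infty$, $w_0(r)=r^{-\sigma_0}$, $w_1(r)=r^{-\sigma_1}$. You have merely made explicit the routine verifications of the hypotheses that the paper leaves to the reader.
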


\begin{rem}\label{rem:w Theta}
For any $w\in\cW^*$, there exist $\sigma_0,\sigma_1\in[0,\infty)$
and $\Theta\in\iTheta$
such that \eqref{w Theta} holds.
Actually, 
since $w(r)r^{\nu}$ is almost increasing and 
$w(r)r^{\eta}$ is almost decreasing 
for some positive constants $\nu$ and $\eta$ with $\nu>\eta$,
choosing $\sigma_0,\sigma_1\in[0,\infty)$ and $\epsilon,\epsilon'\in(0,1)$ 
such that
\begin{equation}\label{sig1}
 \sigma_0>\sigma_1, \quad
 \epsilon<\epsilon', \quad
 \sigma_0-(\sigma_0-\sigma_1)\epsilon=\nu, \quad
 \sigma_0-(\sigma_0-\sigma_1)\epsilon'=\eta,
\end{equation}
and setting $\Theta$ as
\begin{equation*}
 \Theta(r^{\sigma_0-\sigma_1})
 =
 w(r)r^{\sigma_0},
\end{equation*}
we have
\begin{equation}\label{sig2}
 \Theta(r^{\sigma_0-\sigma_1})r^{(\sigma_0-\sigma_1)(-\epsilon)}
 =
 w(r)r^{\nu},
 \quad
 \Theta(r^{\sigma_0-\sigma_1})r^{(\sigma_0-\sigma_1)(-\epsilon')}
 =
 w(r)r^{\eta}.
\end{equation}
These show that
$\Theta(r)r^{-\epsilon}$ is almost increasing and
$\Theta(r)r^{-\epsilon'}$ is almost decreasing,
that is $\Theta\in\iTheta$.

Conversely, for any $\Theta\in\iTheta$ and $\sigma_0,\sigma_1\in[0,\infty)$
with $\sigma_0>\sigma_1$,
the function $w$ defined by \eqref{w Theta} is in $\cW^*$
by the relations~\eqref{sig1} and \eqref{sig2}.
\end{rem}

\begin{exmp}\label{exmp:w1}
Let $\sigma_0,\sigma_1\in[0,\infty)$, $\sigma_0>\sigma_1$,
$w_0(r)=r^{\sigma_0}$, $w_1(r)=r^{\sigma_1}$, 
$\alpha,\beta\in(0,1)$, 
and let
\begin{equation*}
 w=w_0\,\Theta(w_1/w_0), \quad
 \Theta(r)=\max(r^{\alpha},r^{\beta}).
\end{equation*}
Then 
\begin{equation*}
 w(r)=\max(r^{-(\sigma_0+\alpha(\sigma_1-\sigma_0))},r^{-(\sigma_0+\beta(\sigma_1-\sigma_0))}),
\end{equation*}
and $\Theta\in\iTheta$, since
\begin{equation*}
 \frac{\Theta(tr)}{\Theta(r)}\le\max(t^{\alpha},t^{\beta})
 \quad\text{for all }r,t\in(0,\infty).
\end{equation*}
\end{exmp}

\begin{exmp}\label{exmp:w2}
Let $\cL$ be the set of 
all continuous functions $\ell:(0,\infty)\to(0,\infty)$ 
for which there exists a constant $c\ge1$ such that 
\begin{equation} \label{logdoubling}
  c^{-1}\le\frac{\ell(s)}{\ell(r)}\le c
  \quad\text{whenever}\quad
  \frac12\le\frac{\log s}{\log r}\le2.
\end{equation}
If $\ell\in\cL$, then, for all $\alpha>0$, 
there exists a constant $c_{\alpha}\ge1$ such that 
\begin{equation} \label{ell-alpha}
  c_{\alpha}^{-1}\ell(r)\le\ell(r^{\alpha})\le c_{\alpha}\ell(r)
  \quad\text{for}\quad
  0<r<\infty.
\end{equation}
For other properties on functions $\ell \in \mathcal{L}$,
see \cite[Section~7]{MiNaOhSh2010JMSJ}.
For example, the following function $\ell_{\beta_1,\beta_2}$ is in $\cL$:
\begin{equation*}
\ell_{\beta_1,\beta_2}(r)
=
\begin{cases}
\left(\log \frac{1}{r}\right)^{-\beta_1}&(0<r<e^{-1}),\\
1&(e^{-1} \le r \le e),\\
(\log r)^{\beta_2}&(e<r),
\end{cases}
\quad \beta_1,\beta_2\in(-\infty,\infty).
\end{equation*}
Let $\sigma_0,\sigma_1\in[0,\infty)$, $\sigma_0>\sigma_1$,
$w_0(r)=r^{-\sigma_0}$, $w_1(r)=r^{-\sigma_1}$, 
$\theta\in(0,1)$, 
and let
\begin{equation*}
 w=w_0\,\Theta(w_1/w_0), \quad
 \Theta(r)=r^{\theta}\ell(r), \quad \ell\in\cL.
\end{equation*}
Then $\Theta\in\iTheta$ and
\begin{equation*}
 w(r)\sim r^{-\sigma}\ell(r), \quad
 \sigma=(1-\theta)\sigma_0+\theta\sigma_1.
\end{equation*}
We can take $\ell_{\beta_1,\beta_2}$ as $\ell$.
\end{exmp}

Take $u=\infty$ and $\Theta(r)=r^{\theta}$ in Corollary~\ref{cor:IP1},
Then we have the following:

\begin{cor}\label{cor:IP2}
Assume that a family $\big\{(E(Q_r),\|\cdot\|_{E(Q_r)})\big\}_{0<r<\infty}$ 
has the restriction and decomposition properties.
Let  
$\sigma_0,\sigma_1\in[0,\infty)$ with $\sigma_0\ne\sigma_1$,
$\theta\in(0,1)$
and 
\begin{equation*}
 \sigma=(1-\theta)\sigma_0+\theta\sigma_1.
\end{equation*}
Then
\begin{equation*}
 (\dB_{\sigma_0}(E)(\R^n),\dB_{\sigma_1}(E)(\R^n))_{\theta,\infty}
 =
 \dB_{\sigma}(E)(\R^n),
\end{equation*}
and
\begin{equation*}
 (B_{\sigma_0}(E)(\R^n),B_{\sigma_1}(E)(\R^n))_{\theta,\infty,\,[1,\infty)}
 =
 B_{\sigma}(E)(\R^n).
\end{equation*}
\end{cor}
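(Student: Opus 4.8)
The plan is to obtain Corollary~\ref{cor:IP2} as the special case $u=\infty$, $\Theta(r)=r^{\theta}$ of Corollary~\ref{cor:IP1}, so that the proof reduces to a handful of routine verifications. First I would check that $\Theta(r)=r^{\theta}$ lies in $\iTheta$. By the characterization recorded in Section~\ref{s:def}, $\Theta\in\iTheta$ precisely when there are $\epsilon,\epsilon'\in(0,1)$ with $\Theta(r)r^{-\epsilon}$ almost increasing and $\Theta(r)r^{-\epsilon'}$ almost decreasing; since $\theta\in(0,1)$, choosing $0<\epsilon<\theta<\epsilon'<1$ makes $r^{\theta-\epsilon}$ increasing and $r^{\theta-\epsilon'}$ decreasing, so indeed $\Theta\in\iTheta$. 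This is exactly the step where the hypothesis $\theta\in(0,1)$ is used.

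Next I would compute the weight produced by \eqref{w Theta} and identify the resulting spaces. One has
\begin{equation*}
 w(r)=r^{-\sigma_0}\,\Theta(r^{\sigma_0-\sigma_1})
 =r^{-\sigma_0}\,(r^{\sigma_0-\sigma_1})^{\theta}
 =r^{-((1-\theta)\sigma_0+\theta\sigma_1)}
 =r^{-\sigma},
\end{equation*}
with $\sigma=(1-\theta)\sigma_0+\theta\sigma_1$ as in the statement. For $u=\infty$ and $w(r)=r^{-\sigma}$ the conventions of Section~\ref{s:intro} give $\dB_{w}^{\infty}(E)=\dB_{\sigma}(E)$ and $B_{w}^{\infty}(E)=B_{\sigma}(E)$, while for $\Theta(r)=r^{\theta}$ the conventions at the end of Section~\ref{s:def} identify $(A_0,A_1,\Theta)_{\infty}$ with $(A_0,A_1)_{\theta,\infty}$ and $(A_0,A_1,\Theta)_{\infty,[1,\infty)}$ with $(A_0,A_1)_{\theta,\infty,[1,\infty)}$. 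Since $u=\infty$, the side condition in Corollary~\ref{cor:IP1} requiring $\sigma_0,\sigma_1\in(0,\infty)$ when $u<\infty$ is vacuous, so the hypotheses $\sigma_0,\sigma_1\in[0,\infty)$, $\sigma_0\ne\sigma_1$ of Corollary~\ref{cor:IP2} suffice to invoke it. Substituting these identifications into the two conclusions of Corollary~\ref{cor:IP1} yields precisely the two asserted equalities.

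I expect no genuine obstacle here: the result is a pure specialization. The only points demanding a moment's care are confirming the membership $r^{\theta}\in\iTheta$ (which is where $\theta\in(0,1)$ enters) and keeping the two notational reductions—one on the exponent side, one on the interpolation-functor side—consistent, so that the interpolated exponent emerges correctly as $\sigma=(1-\theta)\sigma_0+\theta\sigma_1$.
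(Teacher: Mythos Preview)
Your proposal is correct and follows exactly the paper's own route: the paper states Corollary~\ref{cor:IP2} immediately after the sentence ``Take $u=\infty$ and $\Theta(r)=r^{\theta}$ in Corollary~\ref{cor:IP1},'' so the intended proof is precisely the specialization you describe. Your additional verifications (that $r^{\theta}\in\iTheta$, the computation of $w$, and that the side condition on $\sigma_0,\sigma_1$ is vacuous when $u=\infty$) are all correct and simply make explicit what the paper leaves to the reader.
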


Let $E=L^p$. 
Then, using Corollaries~\ref{cor:IP1} and \ref{cor:IP2} 
we have the following:

\begin{exmp}\label{exmp:BsLp}
Take $\sigma_0=\sigma\in(0,\infty)$, $\sigma_1=0$
and $\tau=(1-\theta)\sigma$ with $\theta\in(0,1)$
in Corollary~\ref{cor:IP2}.
Then, since $B_0(L^p)(\R^n)=L^p(\R^n)$,
\begin{equation*}
 (\dB_{\sigma}(L^p)(\R^n),L^p(\R^n))_{\theta,\infty}
 =
 \dB_{\tau}(L^p)(\R^n),
\end{equation*}
and
\begin{equation*}
 (B_{\sigma_1}(L^p)(\R^n),L^p(\R^n))_{\theta,\infty,\,[1,\infty)}
 =
 B_{\tau}(L^p)(\R^n).
\end{equation*}
\end{exmp}

\begin{exmp}\label{exmp:BwLp}
Take $u=\infty$, $\sigma_0=\sigma\in(0,\infty)$, $\sigma_1=0$,
$w(r)=r^{-\sigma}\Theta(r^{\sigma})$ with $w\in\cW^*$ and $\Theta\in\iTheta$,
in Corollary~\ref{cor:IP1}.
Then 
\begin{equation*}
 (\dB_{\sigma}(L^p)(\R^n),L^p(\R^n),\Theta)_{\infty}
 =
 \dB_{w}^{\infty}(L^p)(\R^n),
\end{equation*}
and
\begin{equation*}
 (B_{\sigma_1}(L^p)(\R^n),L^p(\R^n),\Theta)_{\infty,\,[1,\infty)}
 =
 B_{w}^{\infty}(L^p)(\R^n).
\end{equation*}
\end{exmp}

\begin{exmp}\label{exmp:locM}
Take $u\in(0,\infty)$, $\sigma_0,\sigma_1\in(0,\infty)$, 
$w(r)=r^{-\sigma_0}\Theta(r^{\sigma_0-\sigma_1})$ with $w\in\cW^*$ and $\Theta\in\iTheta$,
in Corollary~\ref{cor:IP1}.
Then 
\begin{equation*}
 (\dB_{\sigma_0}(L^p)(\R^n),\dB_{\sigma_1}(L^p)(\R^n),\Theta)_{u}
 =
 \dB_{w}^{u}(L^p)(\R^n),
\end{equation*}
and
\begin{equation*}
 (B_{\sigma_0}(L^p)(\R^n),B_{\sigma_1}(L^p)(\R^n),\Theta)_{u,\,[1,\infty)}
 =
 B_{w}^{u}(L^p)(\R^n).
\end{equation*}
In this case $\dB_{w}^{u}(L^p)(\R^n)$ is 
the local Morrey-type space $LM_{pu,\tw}(\R^n)$ 
with $\tw(r)=w(r)/r$.
\end{exmp}

\subsection{Morrey, Campanato and Lipschitz spaces}\label{ss:MCL}
In this subsection, we consider
Morrey, Campanato and Lipschitz spaces
as concrete examples of the function space $E$ 
which does not satisfy the lattice condition \eqref{lattice}.
Let
\begin{equation*}
 Q(x,r)=x+Q_r=\{x+y:y\in Q_r\}.
\end{equation*}
For a measurable set $G \subset \R^n$, 
we denote by $|G|$ and $\chi_{G}$ 
the Lebesgue measure of $G$ and the characteristic function of $G$, 
respectively.
We also abbreviate $\chi_{Q_r}$ to $\chi_r$.

For a function $f\in L^1_{\loc}(\R^n)$ and a measurable set $G \subset \R^n$ 
with $|G|>0$, let 
\begin{equation}\label{mean}
  f_{G}=\frac1{|G|}\int_{G} f(y)\,dy.
\end{equation}
For a measurable function $f$ on $\R^n$, a measurable set $G \subset \R^n$ 
with $|G|>0$ and $t\in[0,\infty)$, let 
\begin{equation}\label{f>t}
  m(G,f,t)=|\{y\in G: |f(y)|>t\}|.
\end{equation}

We recall the definitions of 
Morrey, weak Morrey, Campanato and Lipschitz spaces below.
These function spaces have the restriction properties.
The first two have also the support property~\eqref{support}
and the lattice property~\eqref{lattice}, and then 
the decomposition property.
The last two also have the decomposition property by
Theorem~\ref{thm:Lip} and Proposition~\ref{prop:decomp}.
Therefore,
we can take these function spaces as $E$ in
Theorem~\ref{thm:IP} and Corollaries~\ref{cor:IP1} and \ref{cor:IP2}.

\begin{defn}\label{defn:MC}
Let $\dmn=\R^n$ or $\dmn=Q_r$ with $r>0$.
For $p\in[1,\infty)$, $\lambda\in\R$ and $\alpha\in(0,1]$, 
let
$L_{p,\lambda}(\dmn)$, $WL_{p,\lambda}(\dmn)$, $\cL_{p,\lambda}(\dmn)$ and
$\Lip_{\alpha}(\dmn)$ be the sets of all functions $f$ such that the 
following functionals are finite, respectively: 
\begin{align*}
\|f\|_{L_{p,\lambda}(\dmn)}
  &=\sup_{Q(x,s)\subset\dmn}\frac{1}{s^{\lambda}}
    \left(\frac1{|Q(x,s)|}\int_{Q(x,s)}|f(y)|^p\,dy\right)^{1/p}, \\
\|f\|_{WL_{p,\lambda}(\dmn)}
  &=\sup_{Q(x,s)\subset\dmn}\frac{1}{s^{\lambda}}
    \left(\frac{\sup_{t>0} t^p\,m(Q(x,s),f,t)}{|Q(x,s)|}\right)^{1/p}, \\
\|f\|_{\cL_{p,\lambda}(\dmn)}
  &=\sup_{Q(x,s)\subset\dmn}\frac{1}{s^{\lambda}}
   \left(\frac1{|Q(x,s)|}\int_{Q(x,s)}|f(y)-f_{Q(x,s)}|^p\,dy\right)^{1/p}, \\
\intertext{and}
\|f\|_{\Lip_{\alpha}(\dmn)}
  &=\sup_{x,y\in\dmn,\,x\ne y}\frac{|f(x)-f(y)|}{|x-y|^{\alpha}}.
\end{align*}
\end{defn}

Then $L_{p,\lambda}(\dmn)$ is a Banach space
and $WL_{p,\lambda}(\dmn)$ is a complete quasi-normed space.
In this paper 
we regard $\cL_{p,\lambda}(\dmn)$ and $\Lip_{\alpha}(\dmn)$ 
as spaces of functions modulo constant functions. 
Then $\cL_{p,\lambda}(\R^n)$ and $\Lip_{\alpha}(\R^n)$
are Banach spaces equipped with the norms 
$\|f\|_{\cL_{p,\lambda}}$ and $\|f\|_{\Lip_{\alpha}}$, respectively.

By the definition, 
if $\lambda=-n/p$, then 
$L_{p,-n/p}(\dmn)=L^p(\dmn)$ and
$WL_{p,-n/p}(\dmn)=WL^p(\dmn)$, the weak $L^p$ space.
If $p=1$ and $\lambda=0$, 
then $\cL_{1,0}(\dmn)$ is the usual $\BMO(\dmn)$.

\begin{rem}\label{rem:Bs(E)}
We note that $B_{\sigma}(L_{p,\lambda})(\R^n)$ unifies $L_{p,\lambda}(\R^n)$ and $B^{p,\lambda}(\R^n)$
and that  $B_{\sigma}(\cL_{p,\lambda})(\R^n)$ unifies $\cL_{p,\lambda}(\R^n)$ and $\CMO^{p,\lambda}(\R^n)$. 
Actually, we have the following relations:
\begin{gather} 
  B_{0}(L_{p,\lambda})(\R^n)=L_{p,\lambda}(\R^n),
  \quad
  B_{0}(\cL_{p,\lambda})(\R^n)=\cL_{p,\lambda}(\R^n), \label{b0} \\
  B_{\lambda+n/p}(L_{p,-n/p})(\R^n)=B^{p,\lambda}(\R^n),
  \quad
  B_{\lambda+n/p}(\cL_{p,-n/p})(\R^n)=\CMO^{p,\lambda}(\R^n).  \label{bl}
\end{gather}
In the above relations, the first three follow immediately from their definitions,
and the last one follows from Theorem~\ref{thm:Morrey} below.
We also have the same properties for the function spaces
$\dB_{\sigma}(L_{p,\lambda})(\R^n)$ and 
$\dB_{\sigma}(\cL_{p,\lambda})(\R^n)$. 
\end{rem}

Here 
we state two known theorems which give the relations 
among Morrey, Campanato and Lipschitz spaces. 
For the proofs of Theorems~\ref{thm:Lip} and \ref{thm:Morrey} below, 
see \cite{Campanato1963,Meyers1964,Spanne1965} and \cite{Mizuhara1995,Nakai2006Studia}, respectively.
For other relations among function spaces in Remark~\ref{rem:Bs(E)},
see \cite[Proposition~1]{KoMaNaSa2013RMC}.

\begin{thm}\label{thm:Lip}
If $p\in[1,\infty)$ and $\lambda=\alpha\in(0,1]$, 
then, for each $r>0$, $\cL_{p,\lambda}(Q_r)=\Lip_{\alpha}(Q_r)$ modulo null-functions
and there exists a positive constant $C$, dependent only on $n$ and $\lambda$, such that
$$
  C^{-1}\|f\|_{\cL_{p,\lambda}(Q_r)}\le\|f\|_{\Lip_{\alpha}(Q_r)}\le C\|f\|_{\cL_{p,\lambda}(Q_r)}.
$$
The same conclusion holds on $\R^n$.
\end{thm}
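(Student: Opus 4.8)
The plan is to prove the two-sided norm equivalence by treating the two inclusions separately, first reducing to a single domain by scaling. Writing $\dmn=Q_r$ (cube or ball), the dilation $f(y)\mapsto f(ry)$ multiplies both $\|\cdot\|_{\cL_{p,\lambda}(Q_r)}$ and $\|\cdot\|_{\Lip_{\alpha}(Q_r)}$ by the same factor $r^{\lambda}=r^{\alpha}$ (this is exactly where the hypothesis $\lambda=\alpha$ enters), so I may assume $r=1$; the constant produced will then depend only on $n$ and the fixed shape (ball or cube), i.e.\ only on $n$ and $\lambda$. The case $\dmn=\R^n$ is handled by the same argument, with the boundary difficulties discussed below simply absent.

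The inclusion $\Lip_{\alpha}(\dmn)\subset\cL_{p,\lambda}(\dmn)$ is the straightforward oscillation estimate: for $Q(x,s)\subset\dmn$ and $y\in Q(x,s)$,
\[
 |f(y)-f_{Q(x,s)}|
 \le\frac{1}{|Q(x,s)|}\int_{Q(x,s)}|f(y)-f(z)|\,dz
 \le\|f\|_{\Lip_{\alpha}(\dmn)}(\kappa_n s)^{\alpha},
\]
where $\kappa_n s$ bounds the diameter of $Q(x,s)$. Taking the $L^p$-average and dividing by $s^{\lambda}=s^{\alpha}$ gives $\|f\|_{\cL_{p,\lambda}(\dmn)}\le\kappa_n^{\alpha}\|f\|_{\Lip_{\alpha}(\dmn)}$. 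For the reverse inclusion the core computation is the telescoping estimate over nested balls/cubes $Q_k=Q(x,s2^{-k})$: by Hölder's inequality and $|Q_k|=2^n|Q_{k+1}|$,
\[
 |f_{Q_{k+1}}-f_{Q_k}|
 \le\left(\frac{1}{|Q_{k+1}|}\int_{Q_{k+1}}|f-f_{Q_k}|^p\,dy\right)^{1/p}
 \le 2^{n/p}(s2^{-k})^{\lambda}\|f\|_{\cL_{p,\lambda}(\dmn)}.
\]
Since $\lambda>0$, summing this geometric series shows $f_{Q(x,s)}$ is Cauchy as $s\to0$; its limit $\tilde f(x)$ agrees with $f$ at every Lebesgue point and satisfies $|\tilde f(x)-f_{Q(x,s)}|\ls s^{\lambda}\|f\|_{\cL_{p,\lambda}(\dmn)}$ whenever $Q(x,s)\subset\dmn$. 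The same estimate, applied to two balls $Q\subset Q^*$ of comparable radii contained in $\dmn$, yields $|f_{Q}-f_{Q^*}|\ls(\mathrm{rad}\,Q^*)^{\lambda}\|f\|_{\cL_{p,\lambda}(\dmn)}$.

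With these building blocks I would derive the Hölder bound $|\tilde f(x)-\tilde f(y)|\ls|x-y|^{\lambda}\|f\|_{\cL_{p,\lambda}(\dmn)}$, where $\rho=|x-y|$. When $Q(x,2\rho)\subset\dmn$ this is immediate: insert the common cube $Q^*=Q(x,2\rho)$, which contains both $Q(x,\rho)$ and $Q(y,\rho)$, and combine the three estimates above. This already settles the interior case and the entire statement on $\R^n$.

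The hard part will be the case where $x,y$ lie near $\partial\dmn$, since then no cube of radius $\sim\rho$ containing both points fits inside $\dmn$ (for a ball, two nearby points at distance $\delta\ll\rho$ from the sphere admit only inscribed balls of radius $\sim\delta$ along their chord). Here I would exploit the convexity of $\dmn$: starting at $x$, chain through balls whose radii are comparable to the distance to $\partial\dmn$, moving radially inward to an interior point $x'$ at distance $\sim\rho$ from $x$ carrying a contained ball of radius $\sim\rho$. Because $\lambda>0$, the telescoped cost of this excursion is $\ls\sum_k(2^k\delta)^{\lambda}\ls\rho^{\lambda}$, dominated by its largest scale. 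Performing the symmetric excursion at $y$ and joining $x'$ to $y'$ by a single interior ball of radius $\sim\rho$ completes the chain. The delicate points are checking that each inward chain remains inside the convex $\dmn$ with all balls genuinely contained in $\dmn$, and that the finitely many implied constants collapse into one depending only on $n$ and $\lambda$; the scaling reduction to $r=1$ guarantees the final constant is uniform in $r$.
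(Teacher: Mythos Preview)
The paper does not give its own proof of this theorem; it simply cites the classical references Campanato~\cite{Campanato1963}, Meyers~\cite{Meyers1964} and Spanne~\cite{Spanne1965} (see the sentence immediately preceding the statement). Your argument is precisely the classical one found in those references: the trivial inclusion $\Lip_{\alpha}\subset\cL_{p,\lambda}$ via the diameter bound, and the converse via the dyadic telescoping estimate $|f_{Q_{k+1}}-f_{Q_k}|\ls 2^{-k\lambda}$ summed as a geometric series, followed by a chain argument near the boundary. The scaling reduction to $r=1$ is a nice way to make the $r$-uniformity of the constant transparent.

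One remark on the boundary step. For the \emph{cube} $Q_1$ the situation is simpler than you suggest: given $x,y\in Q_1$ with $|x-y|=\rho\le 1$, the cube $Q(z,\rho)$ with $z_i=\max(-1+\rho,\min(1-\rho,\tfrac{x_i+y_i}{2}))$ is contained in $Q_1$ and contains both $x$ and $y$, so a single common cube always exists and no chaining is needed. The genuine difficulty you describe (two nearby points at distance $\delta\ll\rho$ from the boundary admitting only inscribed balls of radius $\sim\delta$) arises for the \emph{ball} $Q_1$, and there your radial-excursion chain is exactly the right device; it is the John-domain/Whitney-chain argument used in the cited papers, and convexity of the ball makes the containment verifications routine. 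So your proof is correct, and in the cube case can be shortened.
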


\begin{thm}\label{thm:Morrey}
If $p\in[1,\infty)$ and $\lambda\in[-n/p,0)$, 
then, for each $r>0$, $\cL_{p,\lambda}(Q_r)\cong L_{p,\lambda}(Q_r)$.
More precisely, the map $f\mapsto f-f_{Q_r}$ is bijective and bicontinuous 
from $\cL_{p,\lambda}(Q_r)$ to $L_{p,\lambda}(Q_r)$, 
that is,
there exists a positive constant $C$, dependent only on $n$ and $\lambda$, such that
$$
 C^{-1}\|f\|_{\cL_{p,\lambda}(Q_r)}
 \le
 \|f-f_{Q_r}\|_{L_{p,\lambda}(Q_r)}
 \le
 C\|f\|_{\cL_{p,\lambda}(Q_r)}.
$$
The same conclusion holds on $\R^n$ by using $\lim_{r\to\infty}f_{Q_r}$ instead of $f_{Q_r}$.
\end{thm}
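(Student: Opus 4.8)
The plan is to prove the norm equivalence
\[
 C^{-1}\|f\|_{\cL_{p,\lambda}(Q_r)}
 \le\|f-f_{Q_r}\|_{L_{p,\lambda}(Q_r)}
 \le C\|f\|_{\cL_{p,\lambda}(Q_r)};
\]
bicontinuity is then immediate and bijectivity is formal. All norms below are taken on $Q_r$, and I write $g=f-f_{Q_r}$. Since the Campanato functional is unchanged under the addition of a constant, $\|f\|_{\cL_{p,\lambda}}=\|g\|_{\cL_{p,\lambda}}$, so it suffices to compare $\|g\|_{\cL_{p,\lambda}}$ and $\|g\|_{L_{p,\lambda}}$. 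The lower bound (the inclusion $L_{p,\lambda}\hookrightarrow\cL_{p,\lambda}$) is routine: for every $Q(x,s)\subset Q_r$ the triangle inequality in $L^p\!\big(Q(x,s),dy/|Q(x,s)|\big)$ together with the Jensen bound $|g_{Q(x,s)}|\le\big(\tfrac1{|Q(x,s)|}\int_{Q(x,s)}|g|^p\big)^{1/p}$ gives $\big(\tfrac1{|Q(x,s)|}\int_{Q(x,s)}|g-g_{Q(x,s)}|^p\big)^{1/p}\le2\big(\tfrac1{|Q(x,s)|}\int_{Q(x,s)}|g|^p\big)^{1/p}$; dividing by $s^{\lambda}$ and taking the supremum yields $\|f\|_{\cL_{p,\lambda}}\le2\|g\|_{L_{p,\lambda}}$.

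The substance is the upper bound. The engine is the elementary estimate for nested cubes $Q_1=Q(x_1,s_1)\subset Q_2=Q(x_2,s_2)$,
\[
 |f_{Q_1}-f_{Q_2}|
 \le\Big(\tfrac1{|Q_1|}\int_{Q_1}|f-f_{Q_2}|^p\,dy\Big)^{1/p}
 \le\Big(\tfrac{|Q_2|}{|Q_1|}\Big)^{1/p}s_2^{\lambda}\|f\|_{\cL_{p,\lambda}},
\]
obtained from Jensen's inequality and enlarging the domain from $Q_1$ to $Q_2$. Given $Q(x,s)\subset Q_r=Q(0,r)$, so that $|x|+s\le r$ with $|\cdot|$ the gauge defining $Q_r$, I would connect it to $Q_r$ through the family $P(t)=Q\big((1-t)x,\,(1-t)s+tr\big)$, $t\in[0,1]$, which runs from $Q(x,s)$ to $Q_r$ with radius $\rho(t)=(1-t)s+tr$ increasing. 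A direct check shows every $P(t)$ lies in $Q_r$, since $|(1-t)x|+\rho(t)=(1-t)(|x|+s)+tr\le r$, and that the family is nested, $P(t)\subset P(t')$ for $t<t'$, because $|x|\le r-s$. Choosing $t_0=0<\dots<t_M=1$ with $\rho(t_j)=2^{j}s$ for $j\le M-1$ makes consecutive cubes nested with radius ratio $\le2$, so telescoping and the building block give
\[
 |f_{Q(x,s)}-f_{Q_r}|
 \le2^{n/p}\|f\|_{\cL_{p,\lambda}}\sum_{j=0}^{M-1}\rho(t_{j+1})^{\lambda}
 \le C\,s^{\lambda}\|f\|_{\cL_{p,\lambda}},
\]
the geometric series converging precisely because $\lambda<0$ and being dominated by its smallest–radius term $s^{\lambda}$. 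One further triangle inequality,
\[
 \Big(\tfrac1{|Q(x,s)|}\int_{Q(x,s)}|f-f_{Q_r}|^p\Big)^{1/p}
 \le\Big(\tfrac1{|Q(x,s)|}\int_{Q(x,s)}|f-f_{Q(x,s)}|^p\Big)^{1/p}
 +|f_{Q(x,s)}-f_{Q_r}|
 \le C\,s^{\lambda}\|f\|_{\cL_{p,\lambda}},
\]
then yields $\|f-f_{Q_r}\|_{L_{p,\lambda}}\le C\|f\|_{\cL_{p,\lambda}}$ after dividing by $s^{\lambda}$ and taking the supremum over $Q(x,s)\subset Q_r$.

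For the map $f\mapsto f-f_{Q_r}$, well–definedness modulo constants and boundedness are contained in the equivalence; it is injective because two functions with the same image differ by a constant, and the easy inclusion $L_{p,\lambda}\subset\cL_{p,\lambda}$ provides the inverse, giving the asserted isomorphism. The statement on $\R^n$ reduces to this: the building block with $\lambda<0$ shows that $\{f_{Q_R}\}_{R>0}$ is Cauchy as $R\to\infty$, so $f_{\infty}=\lim_{R\to\infty}f_{Q_R}$ exists; since the chain estimate $|f_{Q(x,s)}-f_{Q_R}|\le C\,s^{\lambda}\|f\|_{\cL_{p,\lambda}}$ is uniform in $R$, letting $R\to\infty$ gives $|f_{Q(x,s)}-f_{\infty}|\le C\,s^{\lambda}\|f\|_{\cL_{p,\lambda}}$, and the computation above goes through verbatim with $f_{\infty}$ in place of $f_{Q_r}$.

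The main obstacle is precisely this chain construction near the boundary: a subcube $Q(x,s)$ may abut $\partial Q_r$, so one cannot inflate it about its own center and remain inside $Q_r$. Interpolating both center and radius toward those of $Q_r$ resolves this, keeping every intermediate cube inside $Q_r$ while the radius still grows geometrically—which is exactly what turns the telescoped sum into a convergent geometric series under the hypothesis $\lambda<0$.
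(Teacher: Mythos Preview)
The paper does not actually prove this theorem; it merely cites \cite{Mizuhara1995,Nakai2006Studia} for the proof. Your argument is correct and is essentially the classical chain-of-cubes argument underlying those references: the easy inclusion $L_{p,\lambda}\hookrightarrow\cL_{p,\lambda}$ via the triangle inequality, and the converse via a telescoping bound $|f_{Q(x,s)}-f_{Q_r}|\lesssim s^{\lambda}\|f\|_{\cL_{p,\lambda}}$ obtained from a geometric chain of nested cubes, the series converging because $\lambda<0$.

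Your handling of the boundary difficulty---interpolating both center and radius via $P(t)=Q\big((1-t)x,(1-t)s+tr\big)$ so that every intermediate cube stays inside $Q_r$ while the radii still double---is a clean way to carry out the chain on a bounded domain; many expositions instead use a reflection or a comparison with cubes centered at the origin, but the effect is the same. One cosmetic remark: the theorem asserts that the constant $C$ depends only on $n$ and $\lambda$, whereas your building block produces $2^{n/p}$. This is harmless since $2^{n/p}\le 2^{n}$ for $p\ge1$, but if you want the sharper $p$-free form you can replace the H\"older step by the $L^1$ estimate $|f_{Q_1}-f_{Q_2}|\le\frac{|Q_2|}{|Q_1|}\cdot\frac{1}{|Q_2|}\int_{Q_2}|f-f_{Q_2}|$, which gives $2^{n}$ directly.
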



Now we consider the decomposition property.
Recall that
$E_{Q}(\R^n)$ is the set of all measurable functions $f$ on $\R^n$
such that $f|_{Q_r}\in E(Q_r)$ for all $r>0$.
If the family $\EQ$ has the restriction property
and the following two conditions,
then it has the decomposition property.
\begin{gather}\label{support}
 f\in E(Q_t), \ 0<r<t<\infty \ \text{and} \ \supp f \subset Q_r
 \ \Rightarrow \
 \|f\|_{E(Q_t)}\le C_E\|f\|_{E(Q_r)},
\\
\begin{split}
 g\in E(Q_r) \ \text{and} \ |f(x)|\le|g(x)| &\ \text{for a.e.\,$x\in Q_r$}
\\
 &\Rightarrow \
 f\in E(Q_r) \ \text{and} \ \|f\|_{E(Q_r)}\le C_E\|g\|_{E(Q_r)}.
\end{split}\label{lattice}
\end{gather}
Actually, 
for $f\in E_Q(\R^n)$, letting
\begin{equation*}
 f_0^r=f\chi_{r}, 
 \quad
 f_1^r=f-f_0^r, 
\end{equation*}
we have the desired decomposition with $a=b=c=1$,
where $\chi_{r}$ is the characteristic function of $Q_r$.
Lebesgue, Orlicz and Lorentz spaces satisfy these conditions.
Moreover, Morrey and weak Morrey spaces also satisfy them.

Next we prove the decomposition property of Campanato spaces.
For $r>0$, let 
\begin{equation}\label{h}
  h_r(x)=h(x/r),
  \quad
  h(x)=\begin{cases}1,&|x|\le1,\\0,&|x|\ge2,\end{cases}
  \quad
  \|h\|_{\Lip_1(\R^n)}\le1.
\end{equation}

\begin{prop}\label{prop:decomp}
Let $p\in[1,\infty)$ and $\lambda\in[-n/p,1]$.
Then the family $\{\cL_{p,\lambda}(Q_r)\}$ has the decomposition property.
More precisely, 
for any $f\in(\cL_{p,\lambda})_Q(\R^n)$ and for any $r>0$,
let
\begin{equation*}
 f_0^r=(f-f_{Q_{2r}})h_{r}, \quad
 f_1^r=f-(f-f_{Q_{2r}})h_{r}.  
\end{equation*}
Then $f=f_0^r+f_1^r$,
\begin{equation*}
 \|f_0^r\|_{\cL_{p,\lambda}(Q_t)}\le
\begin{cases}
 C\|f\|_{\cL_{p,\lambda}(Q_t)} & (0<t<r) \\
 C\|f\|_{\cL_{p,\lambda}(Q_{3r})} & (r\le t<\infty),
\end{cases}
\end{equation*}
and
\begin{equation*}
 \|f_1^r\|_{\cL_{p,\lambda}(Q_t)}\le
\begin{cases}
 0 & (0<t<r) \\
 C\|f\|_{\cL_{p,\lambda}(Q_{3t})} & (r\le t<\infty),
\end{cases}
\end{equation*}
where $C$ is a positive constant independent of $r$, $t$ and $f$.
\end{prop}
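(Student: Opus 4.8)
The plan is to verify the three assertions in turn, isolating the only genuinely hard estimate. First, $f=f_0^r+f_1^r$ is immediate, and since $h_r\equiv1$ on $Q_r$ and $\supp h_r\subset Q_{2r}$, the case $0<t<r$ is free: there $h_r\equiv1$ on $Q_t$, so $f_0^r=f-f_{Q_{2r}}$ and $f_1^r=f_{Q_{2r}}$, whence $\|f_0^r\|_{\cL_{p,\lambda}(Q_t)}=\|f\|_{\cL_{p,\lambda}(Q_t)}$ and $\|f_1^r\|_{\cL_{p,\lambda}(Q_t)}=0$, because the Campanato functional ignores additive constants. Moreover, once the bound for $f_0^r$ on $Q_t$ with $t\ge r$ is in hand, the corresponding bound for $f_1^r$ follows by subadditivity of the seminorm together with the restriction property:
\[
 \|f_1^r\|_{\cL_{p,\lambda}(Q_t)}
 \le \|f\|_{\cL_{p,\lambda}(Q_t)}+\|f_0^r\|_{\cL_{p,\lambda}(Q_t)}
 \le C\|f\|_{\cL_{p,\lambda}(Q_{3t})}+C\|f\|_{\cL_{p,\lambda}(Q_{3r})}
 \le C\|f\|_{\cL_{p,\lambda}(Q_{3t})},
\]
using $Q_t\subset Q_{3t}$ and $Q_{3r}\subset Q_{3t}$. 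Thus everything reduces to the single estimate $\|f_0^r\|_{\cL_{p,\lambda}(Q_t)}\le C\|f\|_{\cL_{p,\lambda}(Q_{3r})}$ for $t\ge r$.

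To prove that, write $g=f-f_{Q_{2r}}$, so $f_0^r=gh_r$, and fix a subcube $Q(x,s)\subset Q_t$. If $Q(x,s)\cap Q_{2r}=\emptyset$ then $f_0^r\equiv0$ there and there is nothing to do; otherwise I split on the scale $s$. For the large subcubes $s\ge r/2$ I bound the oscillation by twice the $L^p$-average, use $|f_0^r|\le|g|$ pointwise and the one-ball bound $\big(\tfrac1{|Q_{2r}|}\int_{Q_{2r}}|g|^p\big)^{1/p}\le(2r)^\lambda\|f\|_{\cL_{p,\lambda}(Q_{2r})}$, and collect the powers of $s,r$ into a factor $(2r/s)^{\lambda+n/p}$; this stays bounded because $\lambda+n/p\ge0$, yielding $\le C\|f\|_{\cL_{p,\lambda}(Q_{2r})}\le C\|f\|_{\cL_{p,\lambda}(Q_{3r})}$ by restriction.

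The heart of the matter is the small subcubes $s<r/2$, where one checks $Q(x,s)\subset Q_{3r}$. Here I use the product splitting $gh_r-g_{Q(x,s)}h_r(x)=(g-g_{Q(x,s)})h_r+g_{Q(x,s)}(h_r-h_r(x))$ and estimate both pieces against the constant $g_{Q(x,s)}h_r(x)$. The first piece is controlled by $\|h_r\|_\infty\le1$ times the oscillation of $g$, which returns exactly $\|f\|_{\cL_{p,\lambda}(Q_{3r})}$ (as $g$ equals $f$ modulo a constant on $Q(x,s)\subset Q_{3r}$). The second piece equals $|g_{Q(x,s)}|\,\|h_r-h_r(x)\|_{L^p(Q(x,s))}$, and the Lipschitz bound $\|h_r\|_{\Lip_1}\le1/r$ gives the gain $\|h_r-h_r(x)\|_{L^\infty(Q(x,s))}\lesssim s/r$. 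I expect the control of $|g_{Q(x,s)}|=|f_{Q(x,s)}-f_{Q_{2r}}|$ to be the main obstacle: the crude two-ball estimate loses a factor $(r/s)^{n/p}$ that the Lipschitz gain cannot always absorb, so I instead telescope $f_{Q(x,s)}-f_{Q_{2r}}$ along concentric dyadic balls $Q(x,2^ks)$ up to radius $3r-|x|\gtrsim r$ (all of which stay inside $Q_{3r}$), comparing the top ball to $Q_{2r}$ in one further step. This produces the scale-dependent bound $|g_{Q(x,s)}|\lesssim r^\lambda\|f\|_{\cL_{p,\lambda}(Q_{3r})}$ for $\lambda>0$, $\lesssim s^\lambda\|f\|_{\cL_{p,\lambda}(Q_{3r})}$ for $\lambda<0$, and $\lesssim\log(er/s)\,\|f\|_{\cL_{p,\lambda}(Q_{3r})}$ for $\lambda=0$. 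After multiplying by the Lipschitz gain and the normalizing factor $s^{-\lambda}$, the three cases reduce to the boundedness of $(s/r)^{1-\lambda}$, $s/r$, and $(s/r)\log(er/s)$ on $0<s<r$, all of which hold since $\lambda\le1$. The delicate endpoint is $\lambda=0$ (the $\BMO$ case), where the logarithm generated by the chaining is defeated precisely by the linear Lipschitz gain.

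Taking the supremum over all subcubes $Q(x,s)\subset Q_t$ then gives $\|f_0^r\|_{\cL_{p,\lambda}(Q_t)}\le C\|f\|_{\cL_{p,\lambda}(Q_{3r})}$ uniformly in $t\ge r$, which is the required bound for $f_0^r$; the constant $3$ emerges from elementary bookkeeping, namely that every subcube meeting $\supp f_0^r$ at scale $s<r/2$ and every concentric chain used above lies in $Q_{3r}$ (with any harmless enlargement reabsorbed by the restriction property). Combined with the first paragraph, this settles all four estimates.
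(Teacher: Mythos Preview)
Your proof is correct and follows exactly the paper's structure: the same case split $t<r$ versus $t\ge r$, the same observation that on $Q_t$ with $t<r$ one has $f_0^r=f-f_{Q_{2r}}$ and $f_1^r=f_{Q_{2r}}$, and the same derivation of the $f_1^r$ bound from the $f_0^r$ bound via subadditivity and restriction. The only difference is that for the key estimate $\|f_0^r\|_{\cL_{p,\lambda}(Q_t)}\le C\|f\|_{\cL_{p,\lambda}(Q_{3r})}$ when $t\ge r$, the paper simply invokes \cite[Lemma~3.5]{MatsuNakai2011}, whereas you supply a full self-contained argument (scale dichotomy on subcubes, product splitting $gh_r-g_{Q(x,s)}h_r(x)$, Lipschitz gain $s/r$ from $h_r$, and dyadic chaining to control $|f_{Q(x,s)}-f_{Q_{2r}}|$); this is precisely the standard argument behind that lemma, so your proof is the paper's proof with the citation unpacked.
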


\begin{proof}
If $0<t<r$, then 
$f_0^r=f-f_{Q_{2r}}$,  
$f_1^r=f_{Q_{2r}}$
and 
\begin{equation*}
 \|f_0^r\|_{\cL_{p,\lambda(Q_t)}}
 =
 \|f\|_{\cL_{p,\lambda(Q_t)}},
 \quad
 \|f_1^r\|_{\cL_{p,\lambda(Q_t)}}
 =
 0.
\end{equation*}
If $r\le t<\infty$, then, 
by the same argument as \cite[Lemma~3.5]{MatsuNakai2011} we have
\begin{equation*}
 \|f_0^r\|_{\cL_{p,\lambda(Q_t)}}
 \le C
 \|f\|_{\cL_{p,\lambda}(Q_{3r})},
\end{equation*}
and
\begin{equation*}
 \|f_1^r\|_{\cL_{p,\lambda(Q_t)}}
 \le 
 \|f\|_{\cL_{p,\lambda(Q_t)}}+\|f_0^r\|_{\cL_{p,\lambda(Q_t)}}
 \le 
 \|f\|_{\cL_{p,\lambda}(Q_{t})}
 +C\|f\|_{\cL_{p,\lambda}(Q_{3r})}
 \le 
 C\|f\|_{\cL_{p,\lambda}(Q_{3t})}.
\end{equation*}
Then we have the conclusion.
\end{proof}

By Theorem~\ref{thm:Lip} we have the following:

\begin{cor}\label{cor:decomp}
Let $\alpha\in(0,1]$.
Then the family $\{\Lip_{\alpha}(Q_r)\}$ has the decomposition property.
\end{cor}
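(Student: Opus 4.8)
The plan is to deduce the decomposition property for $\{\Lip_{\alpha}(Q_r)\}$ directly from that of the Campanato family $\{\cL_{p,\lambda}(Q_r)\}$ established in Proposition~\ref{prop:decomp}, by transporting the decomposition through the norm equivalence of Theorem~\ref{thm:Lip}. First I would fix any $p\in[1,\infty)$ and set $\lambda=\alpha$. Since $\alpha\in(0,1]\subset[-n/p,1]$, Proposition~\ref{prop:decomp} applies and furnishes, for every $f\in(\cL_{p,\alpha})_Q(\R^n)$ and every $r>0$, the explicit splitting $f=f_0^r+f_1^r$ together with the Campanato-norm estimates stated there, with constants $a=b=3$ and $c=1$. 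By the equivalence of Theorem~\ref{thm:Lip}, $(\Lip_{\alpha})_Q(\R^n)=(\cL_{p,\alpha})_Q(\R^n)$, so the very same splitting is available for any $f$ lying in the Lipschitz scale.

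Next I would invoke Theorem~\ref{thm:Lip}, which gives, for each $t>0$, the two-sided bound
\[
 C^{-1}\|g\|_{\cL_{p,\alpha}(Q_t)}\le\|g\|_{\Lip_{\alpha}(Q_t)}\le C\|g\|_{\cL_{p,\alpha}(Q_t)},
\]
where $C$ depends only on $n$ and $\alpha$. The essential feature is that this constant is \emph{independent of} $t$, so the equivalence is uniform over the whole family. Applying the upper bound to $g=f_0^r$ and $g=f_1^r$, then the Campanato estimates of Proposition~\ref{prop:decomp}, and finally the lower bound to re-express the right-hand side back in the $\Lip_{\alpha}$-norm, I obtain, for instance in the range $r\le t<\infty$,
\[
 \|f_0^r\|_{\Lip_{\alpha}(Q_t)}\le C\|f_0^r\|_{\cL_{p,\alpha}(Q_t)}\le CC'\|f\|_{\cL_{p,\alpha}(Q_{3r})}\le C^2C'\|f\|_{\Lip_{\alpha}(Q_{3r})},
\]
and analogously for the remaining ranges, where $C'$ is the constant of Proposition~\ref{prop:decomp}. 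Since every constant appearing here is independent of $r$, $t$ and $f$, these are exactly the bounds \eqref{decomposition0} and \eqref{decomposition1} with $C_E=C^2C'$, $a=b=3$ and $c=1$, which is the decomposition property for $\{\Lip_{\alpha}(Q_r)\}$.

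The only point requiring care, rather than being a genuine obstacle, is bookkeeping around the two conventions: both $\cL_{p,\alpha}(Q_t)$ and $\Lip_{\alpha}(Q_t)$ are taken modulo constant functions, and Theorem~\ref{thm:Lip} identifies them as such. Thus the splitting $f=f_0^r+f_1^r$ produced for the Campanato scale is simultaneously a splitting for the Lipschitz scale, and the constants that are invisible to the Campanato seminorm are equally invisible to the Lipschitz seminorm, so no modification of the decomposition is needed. The uniformity of the constant $C$ in the parameter $t$ is what makes the transfer clean, and it is already built into the statement of Theorem~\ref{thm:Lip}.
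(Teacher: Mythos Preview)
Your argument is correct and is exactly the approach the paper takes: the corollary is stated immediately after Proposition~\ref{prop:decomp} with the one-line justification ``By Theorem~\ref{thm:Lip},'' and you have simply written out the details of that transfer via the uniform norm equivalence. Your observation that the constant in Theorem~\ref{thm:Lip} is independent of the radius is precisely the point that makes the passage work.
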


Therefore, it turned out that we can take 
$L_{p,\lambda}$, $WL_{p,\lambda}$, $\cL_{p,\lambda}$, $\BMO$ 
and $\Lip_{\alpha}$
instead of $L^p$ in Examples~\ref{exmp:BsLp}, \ref{exmp:BwLp} and 
\ref{exmp:locM}.
Actually, we have the following:

\begin{exmp}\label{exmp:MC1}
Take $\sigma_0=\sigma\in(0,\infty)$, $\sigma_1=0$
and $\tau=(1-\theta)\sigma$ with $\theta\in(0,1)$
in Corollary~\ref{cor:IP2}.
Then
\begin{equation*}
 (\dB_{\sigma}(E)(\R^n),E(\R^n))_{\theta,\infty}
 =
 \dB_{\tau}(E)(\R^n),
\end{equation*}
and
\begin{equation*}
 (B_{\sigma_1}(E)(\R^n),E(\R^n))_{\theta,\infty,\,[1,\infty)}
 =
 B_{\tau}(E)(\R^n),
\end{equation*}
where 
$E=L_{p,\lambda}$, $WL_{p,\lambda}$, $\cL_{p,\mu}$, $\BMO$, 
or $\Lip_{\alpha}$,
with $p\in[1,\infty)$, $\lambda\in[-n/p,0]$, $\mu\in[-n/p,1]$ 
and $\alpha\in(0,1]$.
\end{exmp}

\begin{exmp}\label{exmp:MC2}
Take $u=\infty$, $\sigma_0=\sigma\in(0,\infty)$, $\sigma_1=0$,
$w(r)=r^{-\sigma}\Theta(r^{\sigma})$ with $w\in\cW^*$ 
and $\Theta\in\iTheta$,
in Corollary~\ref{cor:IP1}.
Then 
\begin{equation*}
 (\dB_{\sigma}(E)(\R^n),E(\R^n),\Theta)_{\infty}
 =
 \dB_{w}^{\infty}(E)(\R^n),
\end{equation*}
and
\begin{equation*}
 (B_{\sigma_1}(E)(\R^n),E(\R^n),\Theta)_{\infty,\,[1,\infty)}
 =
 B_{w}^{\infty}(E)(\R^n),
\end{equation*}
where 
$E=L_{p,\lambda}$, $WL_{p,\lambda}$, $\cL_{p,\mu}$, $\BMO$, 
or $\Lip_{\alpha}$,
with $p\in[1,\infty)$, $\lambda\in[-n/p,0]$, $\mu\in[-n/p,1]$ and $\alpha\in(0,1]$.
\end{exmp}

\begin{exmp}\label{exmp:MC3}
Take $u\in(0,\infty)$, $\sigma_0,\sigma_1\in(0,\infty)$, 
$w(r)=r^{-\sigma_0}\Theta(r^{\sigma_0-\sigma_1})$ with $w\in\cW^*$ and $\Theta\in\iTheta$,
in Corollary~\ref{cor:IP1}.
Then 
\begin{equation*}
 (\dB_{\sigma_0}(E)(\R^n),\dB_{\sigma_1}(E)(\R^n),\Theta)_{u}
 =
 \dB_{w}^{u}(E)(\R^n),
\end{equation*}
and
\begin{equation*}
 (B_{\sigma_0}(E)(\R^n),B_{\sigma_1}(E)(\R^n),\Theta)_{u,\,[1,\infty)}
 =
 B_{w}^{u}(E)(\R^n),
\end{equation*}
where 
$E=L_{p,\lambda}$, $WL_{p,\lambda}$, $\cL_{p,\mu}$, $\BMO$, 
or $\Lip_{\alpha}$,
with $p\in[1,\infty)$, $\lambda\in[-n/p,0]$, $\mu\in[-n/p,1]$ and $\alpha\in(0,1]$.
\end{exmp}

\begin{exmp}\label{exmp:MC4}
Let $p\in[1,\infty)$, $\lambda_0,\lambda_1\in[-n/p,\infty)$, $\theta\in(0,1)$ and
$\lambda=(1-\theta)\lambda_0+\theta\lambda_1$.
Then
\begin{align*}
 (\dB^{p,\lambda_0}(\R^n),\dB^{p,\lambda_1}(\R^n))_{\theta,\infty}
 &=
 \dB^{p,\lambda}(\R^n),
\\
 (\CBMO^{p,\lambda_0}(\R^n),\CBMO^{p,\lambda_1}(\R^n))_{\theta,\infty}
 &=
 \CBMO^{p,\lambda}(\R^n),
\end{align*}
and
\begin{align*}
 (B^{p,\lambda_0}(\R^n),B^{p,\lambda_1}(\R^n))_{\theta,\infty,\,[1,\infty)}
 &=
 B^{p,\lambda}(\R^n),
\\
 (\CMO^{p,\lambda_0}(\R^n),\CMO^{p,\lambda_1}(\R^n))_{\theta,\infty,\,[1,\infty)}
 &=
 \CMO^{p,\lambda}(\R^n).
\end{align*}
\end{exmp}

\section{Proof of the main theorem}\label{s:proof}
To prove the main theorem 
we need several lemmas.
We also use a weighted Hardy's inequality 
by Muckenhoupt~\cite{Muckenhoupt1972}.

\begin{lem}\label{lem:incl}
Let $0<u_0<u_1\le\infty$
and $w:(0,\infty)\to(0,\infty)$.
If $w$ satisfies the doubling condition,
then
\begin{equation*}
 B_{w}^{u_0}(E)(\R^n)
 \subset
 B_{w}^{u_1}(E)(\R^n)
 \quad\text{and}\quad
 \dB_{w}^{u_0}(E)(\R^n)
 \subset
 \dB_{w}^{u_1}(E)(\R^n)
\end{equation*}
with
\begin{equation*}
 \|f\|_{B_{w}^{u_1}(E)}
 \le C
 \|f\|_{B_{w}^{u_0}(E)}
 \quad\text{and}\quad
 \|f\|_{\dB_{w}^{u_1}(E)}
 \le C
 \|f\|_{\dB_{w}^{u_0}(E)},
\end{equation*}
respectively, where $C$ is independent of $f$.
\end{lem}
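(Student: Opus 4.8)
The plan is to reduce both inclusions to a single one-variable statement. Writing $I=(0,\infty)$ in the homogeneous case and $I=[1,\infty)$ in the inhomogeneous case, and setting $g(r)=w(r)\|f\|_{E(Q_r)}$, the definitions give $\|f\|_{\dB_w^u(E)}=\|g\|_{L^u(I,dr/r)}$ and $\|f\|_{B_w^u(E)}=\|g\|_{L^u(I,dr/r)}$. Thus it suffices to prove, with a constant $C$ independent of $f$, that $\|g\|_{L^{u_1}(I,dr/r)}\le C\|g\|_{L^{u_0}(I,dr/r)}$; we may of course assume the right-hand side is finite. The main obstacle is conceptual rather than computational: since $dr/r$ has infinite mass on $I$, the naive inclusion $L^{u_0}\subset L^{u_1}$ is false, so the entire content lies in exploiting the regularity of $g$ inherited from the doubling of $w$ and the restriction property of $\EQ$.

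First I would record a one-sided local doubling estimate: for $0<r\le r'\le 2r$ one has $g(r)\le C_0\,g(r')$, where $C_0$ depends only on the doubling constant of $w$ and on $C_E$. Indeed, \eqref{double} gives $w(r)\le Cw(r')$ because $1/2\le r/r'\le 1$, while \eqref{restriction} gives $\|f\|_{E(Q_r)}\le C_E\|f\|_{E(Q_{r'})}$ because $r\le r'$; multiplying yields the claim. The essential point is that this controls $g(r)$ by values of $g$ to the \emph{right} of $r$, so that the averaging interval $[r,2r]$ used below remains inside $I$ in both cases.

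Next I would turn this into a pointwise bound. For $r\in I$ and $r'\in[r,2r]$ we have $g(r')\ge C_0^{-1}g(r)$, while $\int_r^{2r}dr'/r'=\log 2$, so
\begin{equation*}
 C_0^{-u_0}(\log 2)\,g(r)^{u_0}
 \le
 \int_r^{2r}g(r')^{u_0}\,\frac{dr'}{r'}
 \le
 \|g\|_{L^{u_0}(I,dr/r)}^{u_0}.
\end{equation*}
Taking the supremum over $r\in I$ gives $\|g\|_{L^{\infty}(I,dr/r)}\le C_1\|g\|_{L^{u_0}(I,dr/r)}$, which already settles the case $u_1=\infty$.

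Finally, for $u_0<u_1<\infty$ I would interpolate between $L^{u_0}$ and $L^{\infty}$: writing $g^{u_1}=g^{u_1-u_0}g^{u_0}$ and bounding the first factor by $\|g\|_{L^{\infty}(I,dr/r)}^{u_1-u_0}$ gives
\begin{equation*}
 \|g\|_{L^{u_1}(I,dr/r)}
 \le
 \|g\|_{L^{\infty}(I,dr/r)}^{1-u_0/u_1}\,\|g\|_{L^{u_0}(I,dr/r)}^{u_0/u_1}
 \le
 C_1^{1-u_0/u_1}\|g\|_{L^{u_0}(I,dr/r)},
\end{equation*}
which is the desired estimate. The homogeneous and inhomogeneous statements follow from the same computation with $I=(0,\infty)$ and $I=[1,\infty)$. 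The only genuinely delicate point is the direction of the local doubling: because $\|f\|_{E(Q_r)}$ is merely almost increasing, the averaging must be performed over $[r,2r]$ rather than over a symmetric interval, and it is precisely this choice that keeps the argument valid on $[1,\infty)$, where no room to the left of $r$ is available near $r=1$.
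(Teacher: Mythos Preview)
Your proof is correct. Both your argument and the paper's rely on the one-sided local regularity $g(r)\le C_0\,g(r')$ for $r\le r'\le 2r$, obtained from the doubling of $w$ and the restriction property~\eqref{restriction}, but the two proofs package this differently. The paper discretizes: it splits $(0,\infty)$ (resp.\ $[1,\infty)$) into dyadic intervals $[2^{j-1},2^j)$, uses the local regularity to replace the $L^{u}$-norm on each piece by the single sample $w(2^j)\|f\|_{E(Q_{2^j})}$, invokes the sequence-space inclusion $\ell^{u_0}\hookrightarrow\ell^{u_1}$, and then reverses the discretization. Your route stays continuous: the same local regularity is used to bound $\|g\|_{L^\infty}$ by $\|g\|_{L^{u_0}}$ via averaging over $[r,2r]$, and then the elementary interpolation $\|g\|_{L^{u_1}}\le\|g\|_{L^\infty}^{1-u_0/u_1}\|g\|_{L^{u_0}}^{u_0/u_1}$ finishes. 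Your observation that the averaging interval must lie to the \emph{right} of $r$ (so that it remains in $[1,\infty)$ in the inhomogeneous case) is exactly the same issue the paper handles by sampling at the right endpoint $2^j$ and comparing with the next interval $[2^j,2^{j+1})$. The two arguments are of comparable length and difficulty; yours avoids the passage through $\ell^u$ at the cost of a two-step argument, while the paper's makes the underlying mechanism ($\ell^{u_0}\subset\ell^{u_1}$ for sequences) more visible.
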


\begin{proof}
Let $f\in\dB_{w}^{u_1}(E)$.
\begin{align*}
 \|f\|_{\dB_{w}^{u_1}(E)}
 &=\|w(r)\|f\|_{E(Q_r)}\|_{L^{u_1}((0,\infty),dr/r)}
\\
 &=
 \left\|
 \bigg\{\|w(r)\|f\|_{E(Q_r)}\|_{L^{u_1}([2^{j-1},2^j),dr/r)}
 \bigg\}_{j\in\Z}\right\|_{\ell^{u_1}}
\\
 &\ls
 \left\|\bigg\{w(2^j)\|f\|_{E(Q_{2^j})}\bigg\}_{j\in\Z}\right\|_{\ell^{u_1}}
\\
 &\le
 \left\|\bigg\{w(2^j)\|f\|_{E(Q_{2^j})}\bigg\}_{j\in\Z}\right\|_{\ell^{u_0}}
\\
 &\ls
 \left\|\bigg\{
 \|w(r)\|f\|_{E(Q_r)}\|_{L^{u_0}([2^j,2^{j+1}),dr/r)}
 \bigg\}_{j\in\Z}\right\|_{\ell^{u_0}}
\\
 &=\|w(r)\|f\|_{E(Q_r)}\|_{L^{u_0}((0,\infty),dr/r)}
 =\|f\|_{\dB_{w}^{u_0}(E)}.
\end{align*}
For $f\in B_{w}^{u_1}(E)$, take $j\ge1$ instead of $j\in\Z$ in the above
calculation.
\end{proof}

\begin{lem}\label{lem:phi}
Let functions $\phi,G:(0,\infty)\to(0,\infty)$ satisfy the doubling condition,
$\epsilon>0$ and $u\in(0,\infty]$. 
Assume that $\phi(r)r^{-\epsilon}$ is almost increasing or 
$\phi(r)r^{\epsilon}$ is almost decreasing.
Then 
\begin{equation*}
 C^{-1}\|G\|_{L^u((0,\infty),dr/r)}
 \le
 \|G\circ\phi\|_{L^u((0,\infty),dr/r)}
 \le
 C\|G\|_{L^u((0,\infty),dr/r)},
\end{equation*}
and
\begin{equation*}
 C^{-1}\|G\|_{L^u([1,\infty),dr/r)}
 \le
 \|G\circ\phi\|_{L^u([1,\infty),dr/r)}
 \le
 C\|G\|_{L^u([1,\infty),dr/r)},
\end{equation*}
where $C$ is a positive constant depending only on $\epsilon$, $u$ and
the doubling constants of $\phi$ and $G$.
\end{lem}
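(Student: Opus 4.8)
The plan is to reduce both sides to discrete $\ell^u$-sums over a dyadic grid and then compare the two grids by a purely combinatorial argument; a direct change of variables $s=\phi(r)$ is unavailable because the hypotheses make $\phi$ only quasi-monotone, not literally monotone.

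First I would discretize. Writing the integral over $(0,\infty)$ as a sum over the dyadic rings $[2^j,2^{j+1})$ and using the doubling of $\phi$ to get $\phi(r)\sim\phi(2^j)$ on each ring, followed by the doubling of $G$ (iterated finitely many times to absorb the doubling constant $C_\phi$) to get $G(\phi(r))\sim G(\phi(2^j))$, I obtain
\begin{equation*}
 \|G\circ\phi\|_{L^u((0,\infty),dr/r)}^u\sim\sum_{j\in\Z}|G(\phi(2^j))|^u,
\end{equation*}
and likewise $\|G\|_{L^u((0,\infty),dr/r)}^u\sim\sum_{k\in\Z}|G(2^k)|^u$, with suprema replacing sums when $u=\infty$. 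It therefore suffices to prove $\sum_j|G(\phi(2^j))|^u\sim\sum_k|G(2^k)|^u$.

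Next, set $a_j=\phi(2^j)$ and $b_j=\log_2 a_j$, and consider first the case where $\phi(r)r^{-\epsilon}$ is almost increasing; the case $\phi(r)r^{\epsilon}$ almost decreasing is handled by the identical argument with $\{b_j\}$ net-decreasing in place of net-increasing. The doubling of $\phi$ gives $|b_{j+1}-b_j|\le\log_2 C_\phi$, so the increments of $\{b_j\}$ are bounded, while the almost increasing condition gives $b_k-b_j\ge\epsilon(k-j)-\mathrm{const}$ for $k\ge j$, so $\{b_j\}$ has net growth rate at least $\epsilon$ and tends to $\pm\infty$ as $j\to\pm\infty$. From these two facts I extract the combinatorial core: (i) each unit level $[k,k+1)$ contains at most $M$ of the $b_j$, where $M$ depends only on $\epsilon$ and the doubling constants; and (ii) $\{b_j\}$ is $\log_2 C_\phi$-dense in $\R$, so for every $k$ there is an index $j(k)$ with $b_{j(k)}\in[k-\log_2 C_\phi,k]$, and $k\mapsto j(k)$ has bounded multiplicity. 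Property (i), together with $G(a_j)\sim G(2^{\lfloor b_j\rfloor})$ from the doubling of $G$, yields $\sum_j|G(a_j)|^u\le M\sum_k|G(2^k)|^u$; property (ii), together with $G(a_{j(k)})\sim G(2^k)$, yields the reverse inequality. This settles the $(0,\infty)$ statement.

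Finally, for the $[1,\infty)$ statement I would run exactly the same discretization and combinatorics restricted to indices $j\ge0$ and $k\ge0$. The only extra work is at the endpoint $r=1$: the dense set $\{b_j:j\ge0\}$ covers $[\lfloor\log_2\phi(1)\rfloor,\infty)$ with bounded gaps and bounded multiplicity, so the discrete argument identifies $\|G\circ\phi\|_{L^u([1,\infty))}$ with $\|G\|_{L^u([\phi(1),\infty))}$, and it then remains to compare the latter with $\|G\|_{L^u([1,\infty))}$. I expect this endpoint comparison to be the main obstacle, since it is precisely where the non-monotonicity of $\phi$ interacts with the boundary and where the constant must stay independent of $f$; one controls the boundary layer between the half-lines using the doubling of $G$ together with $\phi(1)\sim1$. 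By contrast, the interior grid-comparison is routine once the bounded-gap and bounded-multiplicity properties of $\{b_j\}$ are established.
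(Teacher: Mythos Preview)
Your discretization approach to the $(0,\infty)$ statement is correct and genuinely different from the paper's. The paper does not discretize: it replaces $\phi$ by a smooth strictly monotone bijection via two integrations, $\phi_1(r)=\int_0^r\phi(t)\,\frac{dt}{t}$ and $\phi_2(r)=\int_0^r\phi_1(t)\,\frac{dt}{t}$, obtaining $\phi_2\sim\phi$ with $\phi_2$ a differentiable bijection of $(0,\infty)$ onto itself and $\phi_2'(r)/\phi_2(r)\sim 1/r$; then the honest change of variables $s=\phi_2(r)$ gives the result in one line. Your dyadic grid comparison reaches the same conclusion by exhibiting directly the bounded-gap and bounded-multiplicity structure of $\{\log_2\phi(2^j)\}$; it is more hands-on but entirely elementary and avoids the smoothing trick.

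For the $[1,\infty)$ case your appeal to ``$\phi(1)\sim1$'' is unjustified: nothing in the hypotheses pins down $\phi(1)$, and in fact the $[1,\infty)$ constant cannot be made independent of $\phi(1)$ (take $\phi(r)=r/N$, $G(r)=\min(1,1/r)$, $u=1$; the ratio of the two norms is $1+\log N$). The paper's device is to renormalize, setting $\phi_3(r)=\phi_2(r)/\phi_2(1)$ so that $\phi_3(1)=1$, and then passing from $G\circ\phi_2$ to $G\circ\phi_3$ by iterating the doubling of $G$ roughly $|\log_2\phi_2(1)|$ times; the resulting constant therefore depends on $\phi(1)$ as well as on the listed data. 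Your boundary-layer argument works in exactly the same way once you replace ``$\phi(1)\sim1$'' by ``$\phi(1)$ is a fixed number and we iterate the doubling of $G$ accordingly''. This extra dependence is harmless in the application, where $\phi=w_0/w_1$ is a fixed function. Finally, your phrase ``independent of $f$'' is a slip: there is no $f$ in this lemma.
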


\begin{proof}
If $\phi$ satisfies the doubling condition and 
$\phi(r)r^{-\epsilon}$ is almost increasing,
then $\phi(r)\sim\int_0^r\phi(t)\,dt/t$.
Let $\phi_1(r)=\int_0^r\phi(t)\,dt/t$. 
Then $\phi_1$ is continuous and $\phi\sim\phi_1$, 
that is, $\phi_1$ satisfies the doubling condition and 
$\phi_1(r)r^{-\epsilon}$ is almost increasing.
Let $\phi_2(r)=\int_0^r\phi_1(t)\,dt/t$.
Then $\phi_2$ is differentiable, strictly increasing and $\phi\sim\phi_2$.
In this case $\phi_2(r)r^{-\epsilon}$ is almost increasing,
and then
$\lim_{r\to0}\phi_2(r)=0$ and
$\lim_{r\to\infty}\phi_2(r)=\infty$.
Therefore, $\phi_2$ is bijective from $(0,\infty)$ to itself.
Moreover,
\begin{equation*}
 \frac{\phi_2'(r)}{\phi_2(r)}
 =\frac{\phi_1(r)/r}{\phi_2(r)}
 \sim\frac{1}{r}.
\end{equation*}
Using the doubling condition of $G$,
we have
\begin{align*}
 \|G\circ\phi\|_{L^u((0,\infty),dr/r)}
 &\sim
 \|G\circ\phi_2\|_{L^u((0,\infty),dr/r)}
\\
 &\sim
 \|G\circ\phi_2\|_{L^u((0,\infty),(\phi_2'(r)/\phi_2(r))dr)}
\\
 &=
 \|G\|_{L^u((0,\infty),dr/r)}.
\end{align*}
Further, let $\phi_3(r)=\phi_2(r)/\phi_2(1)$.
Then $\phi_3(1)=1$ and $\phi_3$ has the same properties as $\phi_2$.
Hence, using $\phi_3$, we have
\begin{equation*}
 \|G\circ\phi\|_{L^u([1,\infty),dr/r)}
 \sim
 \|G\|_{L^u([1,\infty),dr/r)}.
\end{equation*}

If $\phi(r)r^{\epsilon}$ is almost decreasing,
letting $\phi_1(r)=\int_r^{\infty}\phi(t)\,dt/t$
and $\phi_2(r)=\int_r^{\infty}\phi_1(t)\,dt/t$,
we see that
$\phi_2$ is differentiable and bijective from $(0,\infty)$ to itself,
and
\begin{equation*}
 \lim_{r\to0}\phi_2(r)=\infty, \quad \lim_{r\to\infty}\phi_2(r)=0, \quad
 -\frac{\phi_2'(r)}{\phi_2(r)}
 =\frac{\phi_1(r)/r}{\phi_2(r)}
 \sim\frac{1}{r}.
\end{equation*}
In this case, we also have the same conclusion.
\end{proof}

\begin{thm}[{Muckenhoupt \cite{Muckenhoupt1972}}]\label{thm:Muck}
Let $p\in[1,\infty]$. 
Let $F^*(r)=\int_0^rf(t)\,dt$ and $F_*(r)=\int_r^{\infty}f(t)\,dt$.
Then
\begin{equation*}
 \|UF^*\|_{L^p(0,\infty)}\le C\|Vf\|_{L^p(0,\infty)}
\end{equation*}
if and only if
\begin{equation*}
 \sup_{r>0}
 \left(\int_r^{\infty}|U(t)|^p\,dt\right)^{1/p}
 \left(\int_0^r|V(t)|^{-p'}\,dt\right)^{1/p'}<\infty.
\end{equation*}
Also,
\begin{equation*}
 \|UF_*\|_{L^p(0,\infty)}\le C\|Vf\|_{L^p(0,\infty)}
\end{equation*}
if and only if
\begin{equation*}
 \sup_{r>0}
 \left(\int_0^r|U(t)|^p\,dt\right)^{1/p}
 \left(\int_r^{\infty}|V(t)|^{-p'}\,dt\right)^{1/p'}<\infty.
\end{equation*}
\end{thm}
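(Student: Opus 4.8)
The plan is to recognize this as the classical weighted Hardy inequality and to prove both equivalences by a single method, noting at the outset that the statement for $F_*$ reduces to the one for $F^*$ under the reflection $t\mapsto 1/t$, which interchanges the two operators and swaps the two supremum conditions; so I would concentrate entirely on the inequality $\|UF^*\|_{L^p(0,\infty)}\le C\|Vf\|_{L^p(0,\infty)}$. After replacing $U,V,f$ by their absolute values I would first dispose of the endpoints $p=1$ and $p=\infty$, where a single application of Fubini's theorem (respectively a direct pointwise estimate) turns the inequality into the stated supremum condition with explicit constants, so that the substance lies in the range $1<p<\infty$.

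For $1<p<\infty$ the necessity is the easy half. Fixing $r>0$ and testing with $f=|V|^{-p'}\chi_{(0,r)}$ one has $F^*(s)=\int_0^r|V|^{-p'}$ for every $s\ge r$; restricting the left-hand integral to $(r,\infty)$ and computing the right-hand side gives
\[
 \Big(\int_r^\infty|U|^p\Big)^{1/p}\Big(\int_0^r|V|^{-p'}\Big)\le C\Big(\int_0^r|V|^{-p'}\Big)^{1/p},
\]
which, since $1-1/p=1/p'$, rearranges to the finiteness of the supremum together with the bound (supremum) $\le C$.

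The heart of the matter is sufficiency, and this is where I expect the only genuine difficulty. Writing $B(t)=\int_0^t|V|^{-p'}$, an increasing function with $B'=|V|^{-p'}$, I would insert an auxiliary weight $B^{-\beta}$ and apply Hölder's inequality,
\[
 F^*(r)=\int_0^r f\,dt=\int_0^r (f|V|B^{-\beta})\,(|V|B^{-\beta})^{-1}\,dt
 \le\Big(\int_0^r (f|V|)^p B^{-\beta p}\,dt\Big)^{1/p}\Big(\int_0^r |V|^{-p'}B^{\beta p'}\,dt\Big)^{1/p'},
\]
where, since $B'=|V|^{-p'}$, the last factor equals $(\beta p'+1)^{-1/p'}B(r)^{\beta+1/p'}$ provided $\beta>-1/p'$. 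Raising to the $p$-th power, multiplying by $|U(r)|^p$, integrating in $r$ and exchanging the order of integration by Fubini reduces everything to the pointwise estimate
\[
 \int_t^\infty |U(r)|^p B(r)^{(\beta+1/p')p}\,dr\le C\,B(t)^{\beta p}\qquad(\text{a.e.\ }t),
\]
after which the surviving factor $B(t)^{-\beta p}$ from the inner integral cancels and the right-hand side is exactly $C\|Vf\|_{L^p}^p$. The one delicate point is to produce this last estimate: the \emph{critical} choice $\beta=-1/p'$, which would eliminate the extra weight and turn the $r$-integral verbatim into the Muckenhoupt condition, makes the Hölder factor above diverge, so one must work subcritically. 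Choosing $\beta$ in the open interval $(-1/p',0)$ and integrating by parts in $r$, using $\int_s^\infty|U|^p\,dr\le A^p B(s)^{-(p-1)}$ from the Muckenhoupt supremum, yields the required bound as a sum of two convergent terms. Thus the main thing to get right is the admissible range of $\beta$ — subcritical, yet still making both the Hölder factor and the leftover $r$-integral converge. Since the theorem is classical and due to Muckenhoupt, one may of course simply invoke \cite{Muckenhoupt1972}, but the argument above is the self-contained route I would take.
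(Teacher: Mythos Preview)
The paper does not supply a proof of this theorem at all: it is quoted verbatim from Muckenhoupt~\cite{Muckenhoupt1972} and used as a black box in the proof of Lemma~\ref{lem:w*}. There is therefore no ``paper's own proof'' to compare your attempt against.

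That said, your sketch is the standard self-contained argument and is essentially correct. The reduction of the $F_*$ case to the $F^*$ case by the change of variable $t\mapsto 1/t$ is fine; necessity via the test function $f=|V|^{-p'}\chi_{(0,r)}$ is the usual and correct computation; and your sufficiency argument via H\"older with the auxiliary weight $B^{-\beta}$, $B(t)=\int_0^t|V|^{-p'}$, is exactly Muckenhoupt's method. The integration-by-parts step you flag as delicate does go through for any $\beta\in(-1/p',0)$: with $G(r)=\int_r^\infty|U|^p$ the Muckenhoupt hypothesis gives $G(r)\le A^pB(r)^{-p/p'}$, the boundary term at $r=t$ is $G(t)B(t)^{(\beta+1/p')p}\le A^pB(t)^{\beta p}$, the boundary term at infinity vanishes since $\beta<0$, and the remaining integral is $\int_t^\infty G(r)\,dB(r)^{(\beta+1/p')p}\le A^p\int_t^\infty B(r)^{\beta p-1}B'(r)\,dr=-(\beta p)^{-1}A^pB(t)^{\beta p}$, convergent because $\beta p<0$. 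The only technicalities you have left implicit are the routine reductions (e.g.\ the cases $B\equiv\infty$ on a half-line or $B$ bounded), which are handled by approximation; these do not affect the argument.
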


\begin{lem}\label{lem:w*}
Let $u_0,u_1,u\in(0,\infty]$, $\max(u_0,u_1)\le u$,
$w_0,w_1\in\cW^{\infty}$, $\Theta\in\iTheta$,
and let
\begin{equation*}
 w=w_0\,\Theta(w_1/w_0), \quad w_*=w_0/w_1.
\end{equation*}
\begin{enumerate}
\item 
Let $\max(u_0,u_1)<\infty$ and $w_0,w_1\in\cW^*$.
Assume that 
$w_*(r)r^{-\epsilon}$ is almost increasing 
for some positive constant $\epsilon$.
For $f\in\dBwu$, let
\begin{equation*}
 F_0(t)=w_0(t)^{u_0}\|f\|_{E(Q_t)}^{u_0}t^{-1}, \quad
 U_0(r)=\bigg(\Theta(w_*(r)^{-1})\bigg)^{u_0}r^{-u_0/u}, 
\end{equation*}
and
\begin{equation*}
 F_1(t)=w_1(t)^{u_1}\|f\|_{E(Q_t)}^{u_1}t^{-1}, \quad
 U_1(r)=\bigg(w_*(r)\Theta(w_*(r)^{-1})\bigg)^{u_1}r^{-u_1/u}.
\end{equation*}
Then
\begin{equation*}
 \left\|U_0(r)\int_0^r F_0(t)\,dt\right\|_{L^{u/u_0}(0,\infty)}^{1/u_0}
 +\left\|U_1(r)\int_r^{\infty}F_1(t)\,dt\right\|_{L^{u/u_1}(0,\infty)}^{1/u_1}
 \le C
 \|f\|_{\dBwu},
\end{equation*}
where $C$ is independent of $f$.
\item 
Let $u_0=u=\infty$.
Assume that 
$w_*(r)$ is almost increasing. 
For $f\in\dBwu$, let
\begin{equation*}
 F_0(t)=w_0(t)\|f\|_{E(Q_t)}, \quad
 U_0(r)=\Theta(w_*(r)^{-1}). \quad
\end{equation*}
Then
\begin{equation*}
 \bigg\|U_0(r)
  \left(\sup_{t\in(0,r)}F_0(t)\right)
 \bigg\|_{L^{\infty}(0,\infty)}
 \le C
 \|f\|_{\dBwu},
\end{equation*}
where $C$ is independent of $f$.
\item 
Let $u_1=u=\infty$.
Assume that 
$w_*(r)$ is almost increasing. 
For $f\in\dBwu$, let
\begin{equation*}
 F_1(t)=w_1(t)\|f\|_{E(Q_t)}, \quad
 U_1(r)=w_*(r)\Theta(w_*(r)^{-1}). \quad
\end{equation*}
Then
\begin{equation*}
 \bigg\|U_1(r)
  \left(\sup_{t\in(r,\infty)}F_1(t)\right)
 \bigg\|_{L^{\infty}(0,\infty)}
 \le C
 \|f\|_{\dBwu},
\end{equation*}
where $C$ is independent of $f$.
\end{enumerate}
\end{lem}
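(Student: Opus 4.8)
The plan is to handle the three parts under one scheme. Abbreviating $g(r)=\|f\|_{E(Q_r)}$ and writing $w=w_0\,\Theta(w_*^{-1})$ with $w_*=w_0/w_1$, so that $\|f\|_{\dBwu}=\|w(r)g(r)\|_{L^u((0,\infty),dr/r)}$, each inequality asserts that a Hardy-type averaging of the weighted profile $g$ is controlled by $\|f\|_{\dBwu}$. Parts (ii) and (iii), where $u=\infty$, are pointwise in nature and I would dispatch them directly; part (i) carries the real content, and I would deduce it from the weighted Hardy inequality of Muckenhoupt, Theorem~\ref{thm:Muck}.

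For part (i) I would invoke Theorem~\ref{thm:Muck} twice, with exponent $p=u/u_0$ on the $F_0$-term and $q=u/u_1$ on the $F_1$-term; both are $\ge1$ because $\max(u_0,u_1)\le u$. The crucial move is to pick the companion weight $V$ so that the right-hand side of Muckenhoupt's inequality becomes exactly the target norm. For the first inequality (with $F^*(r)=\int_0^rF_0$) I take $V(t)=\Theta(w_*(t)^{-1})^{u_0}\,t^{1-u_0/u}$, and for the second (with $F_*(r)=\int_r^{\infty}F_1$) I take $V(t)=\big(w_*(t)\Theta(w_*(t)^{-1})\big)^{u_1}\,t^{1-u_1/u}$. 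Substituting and using the identities $w_0\,\Theta(w_*^{-1})=w$ and $w_1\,w_*\Theta(w_*^{-1})=w$, one checks that
\[
 \|VF_0\|_{L^{u/u_0}(0,\infty)}^{1/u_0}=\|VF_1\|_{L^{u/u_1}(0,\infty)}^{1/u_1}=\|f\|_{\dBwu}.
\]
Thus Theorem~\ref{thm:Muck} gives the asserted estimate as soon as the two Muckenhoupt conditions hold, which after simplification (using $(1-u_0/u)p'=1$ and $(1-u_1/u)q'=1$) read
\[
 \sup_{r>0}\Big(\int_r^{\infty}\Theta(w_*(t)^{-1})^u\,\tfrac{dt}{t}\Big)^{u_0/u}\Big(\int_0^r\Theta(w_*(t)^{-1})^{-u_0p'}\,\tfrac{dt}{t}\Big)^{1/p'}<\infty,
\]
together with the companion condition in which $\int_0^r$ and $\int_r^{\infty}$ are interchanged and $\Theta(w_*^{-1})$ is replaced by $w_*\Theta(w_*^{-1})$.

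To verify these I would first record the structural consequence of the hypotheses. Since $w_*(r)r^{-\epsilon}$ is almost increasing, $w_*$ tends to $0$ at the origin and to $\infty$ at infinity at a power rate; feeding this into the two one-sided power bounds defining $\Theta\in\iTheta$ (that $\Theta(s)s^{-\epsilon_1}$ is almost increasing and $\Theta(s)s^{-\epsilon_1'}$ is almost decreasing for some $0<\epsilon_1\le\epsilon_1'<1$) yields, for some $\gamma,\delta>0$, that $\Theta(w_*(t)^{-1})\,t^{\gamma}$ is almost decreasing while $w_*(t)\Theta(w_*(t)^{-1})\,t^{-\delta}$ is almost increasing. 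Each of the four integrals above then has an integrand that is an almost-monotone power of $t$, so the elementary principle that such an integral is dominated by its endpoint value gives
\[
 \int_r^{\infty}\Theta(w_*(t)^{-1})^u\,\tfrac{dt}{t}\ls\Theta(w_*(r)^{-1})^u,\qquad \int_0^r\Theta(w_*(t)^{-1})^{-u_0p'}\,\tfrac{dt}{t}\ls\Theta(w_*(r)^{-1})^{-u_0p'},
\]
and likewise for the $F_1$-integrals with $w_*\Theta(w_*^{-1})$ in place of $\Theta(w_*^{-1})$. Multiplying the endpoint quantities and using $1/p=u_0/u$ collapses the first supremum to $\Theta(w_*(r)^{-1})^{u_0}\cdot\Theta(w_*(r)^{-1})^{-u_0}=1$ uniformly in $r$, and the companion condition collapses to $1$ in the same way.

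Parts (ii) and (iii) I would settle by pointwise comparison. For (ii), the bound $w_0(t)g(t)=w(t)g(t)/\Theta(w_*(t)^{-1})\le\|f\|_{\dBwu}/\Theta(w_*(t)^{-1})$ reduces the claim to $\sup_{0<t<r}\Theta(w_*(r)^{-1})/\Theta(w_*(t)^{-1})\ls1$, which holds because $w_*$ almost increasing gives $w_*(t)^{-1}\gs w_*(r)^{-1}$ for $t<r$ while $\Theta$ is almost increasing and doubling. Part (iii) is the mirror image: setting $\psi(s)=s\,\Theta(s^{-1})$, which is almost increasing since its effective exponents $1-\epsilon_1,1-\epsilon_1'$ lie in $(0,1)$, one gets $\psi(w_*(t))\gs\psi(w_*(r))$ for $t>r$, whence $\sup_{t>r}U_1(r)/U_1(t)\ls1$. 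The main obstacle, and the only genuinely technical point, is part (i): identifying the correct $V$ and then confirming that the resulting Muckenhoupt expression is the scale-invariant product that equals $1$, which is precisely where the power control of $w_*$ furnished by $w_*(r)r^{-\epsilon}$ almost increasing, together with $\Theta\in\iTheta$, is used.
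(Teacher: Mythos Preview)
Your proposal is correct and follows essentially the same route as the paper: for part~(i) you apply Muckenhoupt's weighted Hardy inequality (Theorem~\ref{thm:Muck}) with exactly the companion weights $V_0(t)=\Theta(w_*(t)^{-1})^{u_0}t^{1-u_0/u}$ and $V_1(t)=(w_*(t)\Theta(w_*(t)^{-1}))^{u_1}t^{1-u_1/u}$ that the paper chooses, and you verify the Muckenhoupt condition via the same endpoint-dominance estimates for the integrals $\int\Theta(w_*(t)^{-1})^{\pm a}\,dt/t$ and $\int(w_*\Theta(w_*^{-1}))^{\pm a}\,dt/t$; for parts~(ii) and~(iii) you argue by the same pointwise monotonicity comparison. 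The only cosmetic difference is that the paper splits the Muckenhoupt verification into three explicit subcases ($u_i<u<\infty$, $u_i=u<\infty$, $u_i<u=\infty$), replacing the relevant integral by a supremum when $p$ or $p'$ is infinite, whereas you write only the generic case $1<p<\infty$; the endpoint cases go through by the same reasoning and you should spell them out.
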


\begin{rem}\label{rem:w*}
In the definition of $F_0$ and $F_1$ of Lemma~\ref{lem:w*},
using $\|f\|_{E(Q_r)}\chi_{[1,\infty)}(r)$ 
instead of $\|f\|_{E(Q_r)}$,
we have the result for $f\in\Bwu$.
\end{rem}

\begin{proof}[Proof of Lemma~\ref{lem:w*}]
(i)
We may assume that 
$w_*(r)r^{-\epsilon}$ and $\Theta(r)r^{-\epsilon}$ are almost increasing 
and $\Theta(r)r^{\epsilon-1}$ is almost decreasing
for the same small $\epsilon$.
First note that, 
using these properties and the doubling condition of $\Theta$,
we have that, 
for $a>0$,
\begin{align*}
 \int_0^r \bigg(\Theta(w_*(t)^{-1})\bigg)^{-a}\,\frac{dt}t
 &=
 \int_0^r 
  \bigg(\Theta(w_*(t)^{-1})w_*(t)^{\epsilon}\bigg)^{-a}
  \bigg(w_*(t)^{-1}t^{\epsilon}\bigg)^{-\epsilon a}
  \,t^{\epsilon^2 a}\,\frac{dt}t
\\
 &\ls
  \bigg(\Theta(w_*(r)^{-1})w_*(r)^{\epsilon}\bigg)^{-a}
  \bigg(w_*(r)^{-1}r^{\epsilon}\bigg)^{-\epsilon a}
  \int_0^r t^{\epsilon^2 a}\,\frac{dt}t
\\
 &\sim
 \bigg(\Theta(w_*(r)^{-1})\bigg)^a,
\end{align*}
and
\begin{align*}
 \int_0^r \bigg(w_*(t)\Theta(w_*(t)^{-1})\bigg)^a\,\frac{dt}t
 &=
 \int_0^r 
  \bigg(w_*(t)^{1-\epsilon}\Theta(w_*(t)^{-1})\bigg)^a
  \bigg(w_*(t)t^{-\epsilon}\bigg)^{\epsilon a}
  \,t^{\epsilon^2 a}\,\frac{dt}t
\\
 &\ls
 \bigg(w_*(r)^{1-\epsilon}\Theta(w_*(r)^{-1})\bigg)^a
  \bigg(w_*(r)r^{-\epsilon}\bigg)^{\epsilon a}
  \int_0^r t^{\epsilon^2 a}\,\frac{dt}t
\\
 &\sim
 \bigg(w_*(r)\Theta(w_*(r)^{-1})\bigg)^a.
\end{align*}
Similarly, we can get
\begin{align*}
 \int_r^{\infty} \bigg(\Theta(w_*(t)^{-1})\bigg)^a\,\frac{dt}t
 \ls \bigg(\Theta(w_*(r)^{-1})\bigg)^a,
\end{align*}
and
\begin{align*}
 \int_r^{\infty} \bigg(w_*(t)\Theta(w_*(t)^{-1})\bigg)^{-a}\,\frac{dt}t
 \ls \bigg(w_*(r)\Theta(w_*(r)^{-1})\bigg)^{-a}.
\end{align*}

Let
\begin{equation*}
 V_0(r)=\bigg(\Theta(w_*(r)^{-1})\bigg)^{u_0}r^{1-u_0/u}, \quad
 V_1(r)=\bigg(w_*(r)\Theta(w_*(r)^{-1})\bigg)^{u_1}r^{1-u_1/u}.
\end{equation*}

\noindent
\underline{Part 1}.
Proof of
\begin{equation*}
 \left\|U_0(r)\int_0^r F_0(t)\,dt\right\|_{L^{u/u_0}(0,\infty)}^{1/u_0}
 \le C
 \|f\|_{\Bwu}.
\end{equation*}
Case 1: $u_0<u<\infty$.
\begin{align*}
 &
 \left(\int_r^{\infty}U_0(t)^{u/u_0}\,dt\right)^{u_0/u}
 \left(\int_0^rV_0(t)^{-u/(u-u_0)}\,dt\right)^{(u-u_0)/u}
\\
 &=
 \left(\int_r^{\infty}\bigg(\Theta(w_*(t)^{-1})\bigg)^u\,\frac{dt}{t}\right)^{u_0/u}
 \left(\int_0^r\bigg(\Theta(w_*(t)^{-1})\bigg)^{-u_0u/(u-u_0)}\,\frac{dt}{t}\right)^{(u-u_0)/u}
\\
 &\ls
 \bigg(\Theta(w_*(t)^{-1})\bigg)^{u_0}
 \bigg(\Theta(w_*(t)^{-1})\bigg)^{-u_0}
 =1.
\end{align*}
Case 2: $u_0=u<\infty$.
\begin{align*}
 &
 \left(\int_r^{\infty}U_0(t)\,dt\right)
 \left(\sup_{t\in(0,r)}V_0(t)^{-1}\right)
\\
 &=
 \left(\int_r^{\infty}\bigg(\Theta(w_*(t)^{-1})\bigg)^{u_0}\,\frac{dt}{t}\right)
 \left(\sup_{t\in(0,r)}\bigg(\Theta(w_*(t)^{-1})\bigg)^{-u_0}\right)
\\
 &\ls
 \bigg(\Theta(w_*(r)^{-1})\bigg)^{u_0}\bigg(\Theta(w_*(t)^{-1})\bigg)^{-u_0}
 =1.
\end{align*}
Case 3: $u_0<u=\infty$.
In this case
\begin{equation*}
 U_0(r)=\bigg(\Theta(w_*(r)^{-1})\bigg)^{u_0}, \quad
 V_0(r)=\bigg(\Theta(w_*(r)^{-1})\bigg)^{u_0}r.
\end{equation*}
Then
\begin{align*}
 &
 \left(\sup_{t\in(r,\infty)}U_0(t)\right)
 \left(\int_0^rV_0(t)^{-1}\,dt\right)
\\
 &=
 \left(\sup_{t\in(r,\infty)}\bigg(\Theta(w_*(t)^{-1})\bigg)^{u_0}\right)
 \left(\int_0^r\bigg(\Theta(w_*(t)^{-1})\bigg)^{-u_0}\,\frac{dt}{t}\right)
\\
 &\sim
 \bigg(\Theta(w_*(r)^{-1})\bigg)^{u_0}\bigg(\Theta(w_*(r)^{-1})\bigg)^{-u_0}
 =1.
\end{align*}
Since
\begin{align*}
 V_0(r)F_0(r)
 &=
 \bigg(\Theta(w_*(r)^{-1})\bigg)^{u_0}r^{1-u_0/u}
 w_0(r)^{u_0}\|f\|_{E(Q_r)}^{u_0}r^{-1}
\\
 &=
 w(r)^{u_0}\|f\|_{E(Q_r)}^{u_0}r^{-u_0/u},
\end{align*}
using Theorem~\ref{thm:Muck}, we have
\begin{align*}
 \left\|U_0(r)\int_0^r F_0(t)\,dt\right\|_{L^{u/u_0}(0,\infty)}^{1/u_0}
 &\ls
 \left\|V_0(r)F_0(r)\right\|_{L^{u/u_0}(0,\infty)}^{1/u_0}
\\
 &=
 \left\|w(r)^{u_0}\|f\|_{E(Q_r)}^{u_0}r^{-u_0/u}
  \right\|_{L^{u/u_0}(0,\infty)}^{1/u_0}
\\
 &=
 \left\|w(r)\|f\|_{E(Q_r)}
  \right\|_{L^{u}((0,\infty),dr/r)}
 = 
 \|f\|_{\dB_{w}^{u}(E)}.
\end{align*}

\noindent
\underline{Part 2}.
Proof of
\begin{equation*}
 \left\|U_1(r)\int_r^{\infty}F_1(t)\,dt\right\|_{L^{u/u_1}(0,\infty)}^{1/u_1}
 \le C
 \|f\|_{\Bwu}.
\end{equation*}
Case 1: $u_1<u<\infty$.
\begin{align*}
 &
 \left(\int_0^rU_1(t)^{u/u_1}\,dt\right)^{u_1/u}
 \left(\int_r^{\infty}V_1(t)^{-u/(u-u_1)}\,dt\right)^{(u-u_1)/u}
\\
 &=
 \left(\int_0^r\bigg(w_*(t)\Theta(w_*(t)^{-1})\bigg)^{u}\,\frac{dt}{t}\right)^{u_1/u}
 \left(\int_r^{\infty}\bigg(w_*(t)\Theta(w_*(t)^{-1})\bigg)^{-u_1u/(u-u_1)}\,\frac{dt}{t}\right)^{(u-u_1)/u}
\\
 &\ls
 \bigg(w_*(r)\Theta(w_*(r)^{-1})\bigg)^{u_1}
 \bigg(w_*(r)\Theta(w_*(r)^{-1})\bigg)^{-u_1}
 =1.
\end{align*}
Case 2: $u_1=u<\infty$.
\begin{align*}
 &
 \left(\int_0^rU_1(t)\,dt\right)
 \left(\sup_{t\in(r,\infty)}V_1(t)^{-1}\right)
\\
 &=
 \left(\int_0^r\bigg(w_*(t)\Theta(w_*(t)^{-1})\bigg)^{u_1}\,\frac{dt}{t}\right)
 \left(\sup_{t\in(r,\infty)}\bigg(w_*(t)\Theta(w_*(t)^{-1})\bigg)^{-u_1}\right)
\\
 &\ls
 \bigg(w_*(r)\Theta(w_*(r)^{-1})\bigg)^{u_1}
 \bigg(w_*(r)\Theta(w_*(r)^{-1})\bigg)^{-u_1}
 =1.
\end{align*}
Case 3: $u_1<u=\infty$.
In this case
\begin{equation*}
 U_1(r)=\bigg(w_*(r)\Theta(w_*(r)^{-1})\bigg)^{u_1}, \quad
 V_1(r)=\bigg(w_*(r)\Theta(w_*(r)^{-1})\bigg)^{u_1}r.
\end{equation*}
Then
\begin{align*}
 &
 \left(\sup_{t\in(0,r)}U_1(t)\right)
 \left(\int_r^{\infty}V_1(t)^{-1}\,dt\right)
\\
 &=
 \left(\sup_{t\in(0,r)}\bigg(w_*(t)\Theta(w_*(t)^{-1})\bigg)^{u_1}\right)
 \left(\int_r^{\infty}\bigg(w_*(t)\Theta(w_*(t)^{-1})\bigg)^{-u_1}\,\frac{dt}{t}\right)
\\
 &\ls
 \bigg(w_*(r)\Theta(w_*(r)^{-1})\bigg)^{u_1}\bigg(w_*(r)\Theta(w_*(r)^{-1})\bigg)^{-u_1}
 =1.
\end{align*}
Since
\begin{align*}
 V_1(r)F_1(r)
 &=
 \bigg(w_*(r)\Theta(w_*(r)^{-1})\bigg)^{u_1}r^{1-u_1/u}
 w_1(r)^{u_1}\|f\|_{E(Q_r)}^{u_1}r^{-1}
\\
 &=
 w(r)^{u_1}\|f\|_{E(Q_r)}^{u_1}r^{-u_1/u},
\end{align*}
using Theorem~\ref{thm:Muck}, we have
\begin{align*}
 \left\|U_1(r)\int_r^{\infty}F_1(t)\,dt\right\|_{L^{u/u_1}(0,\infty)}^{1/u_1}
 &\ls
 \left\|V_1(r)F_1(r)\right\|_{L^{u/u_1}(0,\infty)}^{1/u_1}
\\
 &=
 \left\|w(r)^{u_1}
  \|f\|_{E(Q_t)}^{u_1}r^{-u_1/u}
  \right\|_{L^{u/u_1}(0,\infty)}^{1/u_1}
\\
 &=
 \left\|w(r)\|f\|_{E(Q_r)}
  \right\|_{L^{u}((0,\infty),dr/r)}
 =
 \|f\|_{\dB_{w}^{u}(E)}.
\end{align*}

(ii)
Since $U_0(r)=\Theta(w_*(r)^{-1})$ is almost decreasing,
\begin{align*}
 &\bigg\|U_0(r)
  \left(\sup_{t\in(0,r)}F_0(t)\right)
 \bigg\|_{L^{\infty}(0,\infty)}
 \ls
 \bigg\|
  \left(\sup_{t\in(0,r)}U_0(t)F_0(t)\right)
 \bigg\|_{L^{\infty}(0,\infty)}
\\
 &=
 \bigg\|
  U_0(t)F_0(t)
 \bigg\|_{L^{\infty}(0,\infty)}
 =
 \bigg\|w(t)\|f\|_{E(Q_t)}
 \bigg\|_{L^{\infty}(0,\infty)}
 = 
 \|f\|_{\dB_{w}^{\infty}(E)}.
\end{align*}

(iii)
Since $U_1(r)=w_*(r)\Theta(w_*(r)^{-1})$ is almost increasing,
\begin{align*}
 &\bigg\|U_1(r)
  \left(\sup_{t\in(r,\infty)}F_1(t)\right)
 \bigg\|_{L^{\infty}(0,\infty)}
 \ls
 \bigg\|
  \left(\sup_{t\in(r,\infty)}U_1(t)F_1(t)\right)
 \bigg\|_{L^{\infty}(0,\infty)}
\\
 &=
 \bigg\|
  U_1(t)F_1(t)
 \bigg\|_{L^{\infty}(0,\infty)}
 =
 \bigg\|w(t)\|f\|_{E(Q_t)}
 \bigg\|_{L^{\infty}(0,\infty)}
 = 
 \|f\|_{\dB_{w}^{\infty}(E)}.
\end{align*}
Therefore, we have the conclusion.
\end{proof}

\begin{proof}[Proof of Theorem~\ref{thm:IP}]

We may assume that 
$(w_0(r)/w_1(r))r^{-\epsilon}$ is almost increasing,
by changing $w_0$ and $w_1$ if need.

\noindent
\underline{Part 1}.
Proof of
\begin{equation}\label{subset}
 (\dB_{w_0}^{u_0}(E)(\R^n),\dB_{w_1}^{u_1}(E)(\R^n),\Theta)_{u}
 \subset\dB_{w}^{u}(E)(\R^n).
\end{equation}
Let $f\in (\dB_{w_0}^{u_0}(E)(\R^n),\dB_{w_1}^{u_1}(E)(\R^n),\Theta)_{u}$
and $f=f_0+f_1$ with $f_i\in\dB_{w_i}^{u_i}(E)(\R^n)$, $i=1,2$.
Then
\begin{align*}
 w(r)\|f\|_{E(Q_r)}
 &\le C
 w(r)\left(\|f_0\|_{E(Q_r)}+\|f_1\|_{E(Q_r)}\right)
\\
 &\le C
 \frac{w(r)}{w_0(r)}
 \left(
  w_0(r)\|f_0\|_{E(Q_r)}+\frac{w_0(r)}{w_1(r)}w_1(r)\|f_1\|_{E(Q_r)}
 \right)
\\
 &\le C
 \frac{w(r)}{w_0(r)}
 \left(
  \|f_0\|_{\dB_{w_0}^{\infty}(E)}
   +\frac{w_0(r)}{w_1(r)}\|f_1\|_{\dB_{w_1}^{\infty}(E)}
 \right)
\\
 &\le C\,
 \Theta\!\left(\frac{w_1(r)}{w_0(r)}\right)
 \left(
  \|f_0\|_{\dB_{w_0}^{u_0}(E)}
   +\frac{w_0(r)}{w_1(r)}\|f_1\|_{\dB_{w_1}^{u_1}(E)}
 \right).
\end{align*}
Then, letting $w_*=w_0/w_1$, we have
\begin{equation*}
 w(r)\|f\|_{E(Q_r)}
 \le C
 \Theta(w_*(r)^{-1})
 K(w_*(r),f;\dB_{w_0}^{u_0}(E)(\R^n),\dB_{w_1}^{u_1}(E)(\R^n)).
\end{equation*}
By Lemma~\ref{lem:phi} we have
\begin{align*}
 \|f\|_{\dB_{w}^{u}(E)}
 &=
 \|w(r)\|f\|_{E(Q_r)}\|_{L^u((0,\infty),dr/r)}
\\
 &\ls
 \|\Theta(w_*(r)^{-1})K(w_*(r),f;\dB_{w_0}^{u_0}(E)(\R^n),\dB_{w_1}^{u_1}(E)(\R^n))\|_{L^u((0,\infty),dr/r)}
\\
 &\sim
 \|\Theta(r^{-1})K(r,f;\dB_{w_0}^{u_0}(E)(\R^n),\dB_{w_1}^{u_1}(E)(\R^n))\|_{L^u((0,\infty),dr/r)}
\\
 &=
 \|f\|_{(\dB_{w_0}^{u_0}(E)(\R^n),\dB_{w_1}^{u_1}(E)(\R^n),\Theta)_{u}}.
\end{align*}
This shows \eqref{subset}.

\noindent
\underline{Part 2}.
Proof of
\begin{equation}\label{supset}
 (\dB_{w_0}^{u_0}(E)(\R^n),\dB_{w_1}^{u_1}(E)(\R^n),\Theta)_{u}
 \supset \dB_{w}^{u}(E)(\R^n).
\end{equation}
We may assume that $0<\max(u_0,u_1)\le u\le\infty$,
since
\begin{equation*}
 (\dB_{w_0}^{u_0}(E)(\R^n),\dB_{w_1}^{u_1}(E)(\R^n),\Theta)_{u}
 \supset
 (\dB_{w_0}^{\min(u_0,u)}(E)(\R^n),\dB_{w_1}^{\min(u_1,u)}(E)(\R^n),\Theta)_{u}.
\end{equation*}
Let $f\in\dB_{w}^{u}(E)(\R^n)$ and $r>0$. 
From the decomposition property of $\EQ$,
we can take functions $f_0^r$ and $f_1^r$ satisfying
$f=f_0^r+f_1^r$,
\begin{equation}\label{f0}
 \|f_0^r\|_{E(Q_t)}\le
\begin{cases}
 C_E\|f\|_{E(Q_t)} & (0<t<r), \\
 C_E\|f\|_{E(Q_{ar})} & (r\le t<\infty),
\end{cases}
\end{equation}
and
\begin{equation}\label{f1}
 \|f_1^r\|_{E(Q_t)}\le
\begin{cases}
 0 & (0<t<cr), \\
 C_E\|f\|_{E(Q_{bt})} & (cr\le t<\infty).
\end{cases}
\end{equation}
Here we may assume that $a\ge1$ and $b\ge1$.
We will show that 
$f_0^r\in \dB_{w_0}^{u_0}(E)(\R^n)$,
$f_1^r\in \dB_{w_1}^{u_1}(E)(\R^n)$ and
\begin{multline}\label{K<B}
 \left\|\Theta(w_*(r)^{-1})\|f_0^r\|_{\dB_{w_0}^{u_0}(E)}\right\|_{L^u((0,\infty),dr/r)}
  +\left\|w_*(r)\Theta(w_*(r)^{-1})\|f_1^r\|_{\dB_{w_1}^{u_1}(E)}\right\|_{L^u((0,\infty),dr/r)}
\\
 \ls
 \|f\|_{\dB_{w}^{u}(E)}.
\end{multline}
Then, by Lemma~\ref{lem:phi}
\begin{align*}
 &\|f\|_{(\dB_{w_0}^{u_0}(E)(\R^n),\dB_{w_1}^{u_1}(E)(\R^n),\Theta)_{u}}
\\
 &=
 \left\|\Theta(r^{-1})
  K(r,f;\dB_{w_0}^{u_0}(E)(\R^n),\dB_{w_1}^{u_1}(E)(\R^n))
 \right\|_{L^u((0,\infty),dr/r)}
\\
 &\sim
 \left\|\Theta(w_*(r)^{-1})
  K(w_*(r),f;\dB_{w_0}^{u_0}(E)(\R^n),\dB_{w_1}^{u_1}(E)(\R^n))
 \right\|_{L^u((0,\infty),dr/r)}
\\
 &\le
 \left\|\Theta(w_*(r)^{-1})
  \|f_0^r\|_{\dB_{w_0}^{u_0}(E)}
  +w_*(r)\Theta(w_*(r)^{-1})\|f_1^r\|_{\dB_{w_1}^{u_1}(E)}
 \right\|_{L^u((0,\infty),dr/r)}
\\
 &\ls
 \|f\|_{\dB_{w}^{u}(E)}.
\end{align*}
This shows \eqref{supset}.

Now we prove \eqref{K<B}.
From Lemma~\ref{lem:w*} we see that 
\begin{equation*}
 \|w_0(t)\|f\|_{E(Q_t)}\|_{L^{u_0}((0,2ar),dt/t)}<\infty, \quad
 \|w_1(t)\|f\|_{E(Q_t)}\|_{L^{u_1}([r,\infty),dt/t)}<\infty,
\end{equation*}
and
\begin{multline*}
 \left\|\Theta(w_*(2ar)^{-1})
  \|w_0(t)\|f\|_{E(Q_t)}\|_{L^{u_0}((0,2ar),dt/t)}
 \right\|_{L^u((0,\infty),dr/r)}
\\
 +\left\|w_*(r)\Theta(w_*(r)^{-1})
  \|w_1(t)\|f\|_{E(Q_t)}\|_{L^{u_1}([r,\infty),dt/t)}
 \right\|_{L^u((0,\infty),dr/r)}
 \ls
 \|f\|_{\dB_{w}^{u}(E)}.
\end{multline*}
Therefore, to prove \eqref{K<B} it is enough to show
\begin{equation}\label{w0}
 \|w_0(t)\|f_0^r\|_{E(Q_t)}\|_{L^{u_0}((0,\infty),dt/t)}
 \ls
 \|w_0(t)\|f\|_{E(Q_t)}\|_{L^{u_0}((0,2ar),dt/t)},
\end{equation}
and
\begin{equation}\label{w1}
 \|w_1(t)\|f_1^r\|_{E(Q_t)}\|_{L^{u_1}((0,\infty),dt/t)}
 \ls
 \|w_1(t)\|f\|_{E(Q_t)}\|_{L^{u_1}([r,\infty),dt/t)}.
\end{equation}
Since $w_0\in\cW^{\infty}$ if $u_0=\infty$,
or $w_0\in\cW^*$ if $u_0<\infty$,
\begin{equation*}
 \|w_0(t)\|_{L^{u_0}([r,\infty),dt/t)}
 \ls
 w_0(r)
 \ls
 \|w_0(t)\|_{L^{u_0}([ar,2ar),dt/t)}.
\end{equation*}
From \eqref{f0} it follows that
\begin{align*}
 &
 \|w_0(t)\|f_0^r\|_{E(Q_t)}\|_{L^{u_0}((0,\infty),dt/t)}
\\
 &\ls
 \|w_0(t)\|f\|_{E(Q_t)}\|_{L^{u_0}((0,r),dt/t)}
 +\|f\|_{E(Q_{ar})}\|w_0(t)\|_{L^{u_0}([r,\infty),dt/t)}
\\
 &\ls
 \|w_0(t)\|f\|_{E(Q_t)}\|_{L^{u_0}((0,2ar),dt/t)}.
\end{align*}
This shows \eqref{w0}.
Next we show \eqref{w1}.
From \eqref{f1} it follows that
\begin{align*}
 \|w_1(t)\|f_1^r\|_{E(Q_t)}\|_{L^{u_1}((0,\infty),dt/t)}
 &\ls
 \|w_1(t)\|f\|_{E(Q_{bt})}\|_{L^{u_1}([cr,\infty),dt/t)}
\\
 &\sim
 \|w_1(bt)\|f\|_{E(Q_{bt})}\|_{L^{u_1}([cr,\infty),dt/t)}
\\
 &=
 \|w_1(t)\|f\|_{E(Q_{t})}\|_{L^{u_1}([cr/b,\infty),dt/t)}.
\end{align*}
If $c/b\ge1$, then we have \eqref{w1}. 
If $c/b<1$, then 
\begin{align*}
 &
 \|w_1(t)\|f\|_{E(Q_{t})}\|_{L^{u_1}([cr/b,\infty),dt/t)}
\\
 &=
 \|w_1(t)\|f\|_{E(Q_{t})}\|_{L^{u_1}([cr/b,r),dt/t)}
 +\|w_1(t)\|f\|_{E(Q_{t})}\|_{L^{u_1}([r,\infty),dt/t)}
\\
 &\ls
 \|w_1(t)\|f\|_{E(Q_{t})}\|_{L^{u_1}([r,br/c),dt/t)}
 +\|w_1(t)\|f\|_{E(Q_{t})}\|_{L^{u_1}([r,\infty),dt/t)}
\\
 &\le2
 \|w_1(t)\|f\|_{E(Q_{t})}\|_{L^{u_1}([r,\infty),dt/t)}.
\end{align*}
This shows \eqref{w1}.

\noindent
\underline{Part 3}.
Proof of
\begin{equation}\label{subset B}
 (B_{w_0}^{u_0}(E)(\R^n),B_{w_1}^{u_1}(E)(\R^n),\Theta)_{u,\,[1,\infty)}
 \subset
 B_{w}^{u}(E)(\R^n).
\end{equation}
Using $L^u([1,\infty),dr/r)$ instead of $L^u((0,\infty),dr/r)$ in Part 1,
we have the conclusion.

\noindent
\underline{Part 4}.
Proof of
\begin{equation}\label{supset B}
 (B_{w_0}^{u_0}(E)(\R^n),B_{w_1}^{u_1}(E)(\R^n),\Theta)_{u,\,[1,\infty)}
 \supset
 B_{w}^{u}(E)(\R^n).
\end{equation}
Instead of \eqref{K<B} we need 
\begin{multline}\label{K<B 1}
 \left\|
  \Theta(w_*(r)^{-1})\|f_0^r\|_{B_{w_0}^{u_0}(E)}
 \right\|_{L^u([1,\infty),dr/r)}
 +\left\|
    w_*(r)\Theta(w_*(r)^{-1})\|f_1^r\|_{B_{w_1}^{u_1}(E)}
  \right\|_{L^u([1,\infty),dr/r)}
\\
 \ls
 \|f\|_{B_{w}^{u}(E)}.
\end{multline}
By the same way as 
\eqref{w0} and \eqref{w1}
we can get, for $r\ge1$, 
\begin{equation*}\label{w0 1}
 \|w_0(t)\|f_0^r\|_{E(Q_t)}\|_{L^{u_0}([1,\infty),dt/t)}
 \ls
 \|w_0(t)\|f\|_{E(Q_t)}\|_{L^{u_0}([1,2ar),dt/t)},
\end{equation*}
and
\begin{equation*}\label{w1 1}
 \|w_1(t)\|f_1^r\|_{E(Q_t)}\|_{L^{u_1}([1,\infty),dt/t)}
 \ls
 \|w_1(t)\|f\|_{E(Q_t)}\|_{L^{u_1}([r,\infty),dt/t)},
\end{equation*}
respectively.
By Remark~\ref{rem:w*} 
we see that \eqref{K<B 1} follows from these inequalities. 
\end{proof}

\section{Boundedness of linear and sublinear operators}\label{s:bdd}

In this section we consider the boundedness of
linear and sublinear operators
on $\Bwu(\R^n)$ and $\dBwu(\R^n)$ 
with $E=L_{p,\lambda}$ or $\cL_{p,\lambda}$.
It is known that some classical operators are bounded on 
$B_{\sigma}(E)(\R^n)$ and $\dB_{\sigma}(E)(\R^n)$,
see \cite{KoMaNaSa2013RMC}. 
Applying the interpolation property, 
we extend these boundedness to $\Bwu(\R^n)$ and $\dBwu(\R^n)$.
We consider sublinear operators $T$ defined on $L^1_{\comp}(\R^n)$.
That is, the operator $T$ satisfies that, 
for all $f,g\in L^1_{\comp}(\R^n)$ and for a.e.\,$x\in\R^n$,
\begin{equation*}\label{subadd}
 |T(f+g)(x)|\le|Tf(x)|+|Tg(x)|.
\end{equation*}
We also assume that 
\begin{equation}\label{subdiff}
 |Tf(x)-Tg(x)|\le C|T(f-g)(x)|
\end{equation}
for some positive constant $C$.
For example, if $T$ is linear, or, 
sublinear and $Tf(x)\ge0$ for all $f$ and a.e.\,$x$,
then $T$ satisfies the condition \eqref{subdiff} with $C=1$. 

In general, for quasi-normed function spaces $A_i$ and $B_i$, $i=0,1$,
let a sublinear operator $T:A_0+A_1\to B_0+B_1$
be bounded from $A_i$ to $B_i$, $i=0,1$,
and satisfy \eqref{subdiff}
for all $f,g\in A_0+A_1$.
If $T$ is not linear,
we also assume that $B_i$, $i=0,1$, satisfy the lattice property \eqref{lattice}.
Then we conclude that
\begin{equation*}
 K(r,Tf;B_0,B_1) \le C_TK(r,f;A_0,A_1),
\end{equation*}
where $C_T$ is a positive constant dependent on $T$ and $C$ in \eqref{subdiff}.
Therefore we can use the interpolation property for the boundedness of $T$.
Actually, if $T$ is linear, then
\begin{equation*}
 Tf=Tf_0+Tf_1, \
 Tf_0\in B_0, \ Tf_1\in B_1
\end{equation*}
for any decomposition $f=f_0+f_1$ in $A_0+A_1$.
Hence
\begin{equation*}
 K(r,Tf;B_0,B_1)
 \le
 \|Tf_0\|_{B_0}+r\|Tf_1\|_{B_1}
 \le
 C_T(\|f_0\|_{A_0}+r\|f_1\|_{A_1}).
\end{equation*}
If $T$ is not linear,
then, using \eqref{subdiff} and the lattice property,
we have
\begin{equation*}
 |Tf(x)-Tf_0(x)|\le C|Tf_1(x)|
\end{equation*}
and 
\begin{equation*}
 Tf=Tf_0+(Tf-Tf_0), \
 Tf_0\in B_0, \ Tf-Tf_0\in B_1.
\end{equation*}
Hence
\begin{equation*}
 K(r,Tf;B_0,B_1)
 \le
 \|Tf_0\|_{B_0}+r\|Tf-Tf_0\|_{B_1}
 \le
 C_T(\|f_0\|_{A_0}+r\|f_1\|_{A_1}),
\end{equation*}
for any decomposition $f=f_0+f_1$ in $A_0+A_1$.

We also point out that the condition \eqref{subdiff} is important
to extend $L^p$-bounded operators to bounded operators on Morrey spaces.
Actually, there exists an $L^p$-bounded sublinear operator $T$ such that
$T$ does not satisfy \eqref{subdiff} and 
that $T$ cannot be extended to a bounded operator on Morrey spaces,
see Remark~\ref{rem:def Mor}.

In this section,
first we give the boundedness of 
the Hardy-Littlewood maximal and fractional maximal operators
in Subsection~\ref{ss:M}.
Next we investigate singular and fractional integral operators
and more general sublinear operators with \eqref{subdiff} 
in Subsection~\ref{ss:SI}.
In Subsections~\ref{ss:SICP} and \ref{ss:tFI}
we consider singular integral operators with the cancellation property
and modified fractional integral operators, respectively.
Finally, we show the vector-valued boundedness 
in Subsection~\ref{ss:Vector}.

If $\lambda=-n/p$,
then $L_{p,\lambda}=L^p$ 
and $\dB_w^u(L_{p,\lambda})(\R^n)=\dB_w^u(L^p)(\R^n)=LM_{pu,\tw}(\R^n)$ 
with $\tw(r)=w(r)/r$.
Let $\dB_w^u(WL^p)(\R^n)=WLM_{pu,\tw}(\R^n)$ with $\tw(r)=w(r)/r$,
where $WL^p$ is the weak $L^p$ space.

\subsection{The Hardy-Littlewood maximal and fractional maximal operators}\label{ss:M}

The fractional maximal operators $M_{\alpha}$ 
of order $\alpha\in[0,n)$ are sublinear, 
which is defined as
\begin{equation*}\label{frM}
 M_{\alpha}f(x)=\sup_{Q\ni x}\frac1{|Q|^{1-\alpha/n}}\int_Q|f(y)|\,dy,
\end{equation*}
where the supremum is taken over all cubes (or balls) $Q$ 
containing $x\in\R^n$.
If $\alpha=0$, 
then $M_{\alpha}$ is the Hardy-Littlewood maximal operator denoted by $M$.

It is known that, 
for $\alpha\in[0,n)$, $p,q\in[1,\infty]$ and $-n/p+\alpha=-n/q$,  
the operator $M_{\alpha}$ is bounded 
from $L^p(\R^n)$ to $L^q(\R^n)$ if $p\in(1,\infty]$,
and from $L^1(\R^n)$ to $WL^q(\R^n)$ if $p=1$.

It is also known that, 
for $\alpha\in[0,n)$, $p,q\in[1,\infty)$,
$\lambda\in[-n/p,0)$, $\mu\in[-n/q,0)$,
$\mu=\lambda+\alpha$ and $q\le(\lambda/\mu)p$,
the operator $M_{\alpha}$ is bounded 
from $L_{p,\lambda}(\R^n)$ to $L_{q,\mu}(\R^n)$ if $p\in(1,\infty)$,
and from $L_{1,\lambda}(\R^n)$ to $WL_{q,\mu}(\R^n)$ if $p=1$.
In particular,
the Hardy-Littlewood maximal operator $M$ is bounded 
from  $L_{p,\lambda}(\R^n)$ to itself if $p\in(1,\infty)$ and
from $L_{1,\lambda}(\R^n)$ to $WL_{1,\lambda}(\R^n)$,
see \cite{ChiarenzaFrasca1987}.

The following is known:

\begin{thm}[\cite{KoMaNaSa2013RMC}]\label{thm:M:Bs(Mor)}
Let $\alpha\in[0,n)$, $\sigma\in[0,\infty)$ and
$p,q\in[1,\infty)$,
and let $\lambda\in[-n/p,0)$ and $\mu\in[-n/q,0)$.
Assume that
\begin{equation*}
 \mu=\lambda+\alpha, \quad q\le(\lambda/\mu)p  
 \quad\text{and}\quad \sigma+\lambda+\alpha\le 0.
\end{equation*}
Then the operator $M_{\alpha}$ is bounded 
from $B_{\sigma}(L_{p,\lambda})(\R^n)$ to $B_{\sigma}(L_{q,\mu})(\R^n)$
if $p\in(1,\infty)$, 
from $B_{\sigma}(L_{1,\lambda})(\R^n)$ to $B_{\sigma}(WL_{q,\mu})(\R^n)$
if $p=1$.
The same conclusion holds for $\dB_{\sigma}(L_{p,\lambda})(\R^n)$.
\end{thm}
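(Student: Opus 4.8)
The plan is to reduce both estimates to a single scale-by-scale bound. Since $\|M_{\alpha}f\|_{\dB_{\sigma}(L_{q,\mu})}=\sup_{r>0}r^{-\sigma}\|M_{\alpha}f\|_{L_{q,\mu}(Q_r)}$ and $\|M_{\alpha}f\|_{B_{\sigma}(L_{q,\mu})}=\sup_{r\ge1}r^{-\sigma}\|M_{\alpha}f\|_{L_{q,\mu}(Q_r)}$, it is enough to prove $r^{-\sigma}\|M_{\alpha}f\|_{L_{q,\mu}(Q_r)}\ls\|f\|_{\dB_{\sigma}(L_{p,\lambda})}$ (respectively $\ls\|f\|_{B_{\sigma}(L_{p,\lambda})}$) uniformly in $r>0$ (respectively $r\ge1$). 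For each fixed $r$ I would decompose $f=f\chi_{Q_{2r}}+f(1-\chi_{Q_{2r}})=:g_r+h_r$ into a local part supported in $Q_{2r}$ and a tail part, and, using the sublinear bound $M_{\alpha}f\le M_{\alpha}g_r+M_{\alpha}h_r$, estimate the two contributions on $Q_r$ separately.

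For the local part I would invoke the known boundedness of $M_{\alpha}$ on the whole-space Morrey spaces, namely $M_{\alpha}\colon L_{p,\lambda}(\R^n)\to L_{q,\mu}(\R^n)$ for $p\in(1,\infty)$ and $M_{\alpha}\colon L_{1,\lambda}(\R^n)\to WL_{q,\mu}(\R^n)$ for $p=1$, which holds exactly under the hypotheses $\mu=\lambda+\alpha$ and $q\le(\lambda/\mu)p$. This yields $\|M_{\alpha}g_r\|_{L_{q,\mu}(\R^n)}\ls\|g_r\|_{L_{p,\lambda}(\R^n)}$. I would then compare the whole-space norm of the truncation $g_r=f\chi_{Q_{2r}}$ with the local norm, using that only subcubes of size comparable to $Q_{2r}$ contribute, to get $\|g_r\|_{L_{p,\lambda}(\R^n)}\ls\|f\|_{L_{p,\lambda}(Q_{2r})}$. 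Since $r\mapsto r^{-\sigma}$ is doubling, $r^{-\sigma}\|f\|_{L_{p,\lambda}(Q_{2r})}\ls\|f\|_{\dB_{\sigma}(L_{p,\lambda})}$ by definition, which disposes of the local contribution.

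For the tail part I would use that for $x\in Q_r$ every cube containing $x$ and meeting $\supp h_r\subset(Q_{2r})^c$ has radius $\gs r$, so that
\begin{equation*}
 M_{\alpha}h_r(x)\ls\sup_{s\ge r}\frac1{s^{n-\alpha}}\int_{Q_{Cs}}|f(y)|\,dy.
\end{equation*}
By Hölder's inequality and the definition of the Morrey norm on $Q_{Cs}$, together with $\mu=\lambda+\alpha$, the inner average is $\ls s^{\mu}\|f\|_{L_{p,\lambda}(Q_{Cs})}$, so $M_{\alpha}h_r$ is bounded on $Q_r$ by the constant $A_r:=\sup_{s\ge r}s^{\mu}\|f\|_{L_{p,\lambda}(Q_{Cs})}$. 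Because $\mu<0$, the $L_{q,\mu}(Q_r)$-norm of a nonnegative constant $c$ satisfies $\|c\|_{L_{q,\mu}(Q_r)}\sim c\,r^{-\mu}$, whence $\|M_{\alpha}h_r\|_{L_{q,\mu}(Q_r)}\ls r^{-\mu}A_r$. Estimating $s^{\mu}\|f\|_{L_{p,\lambda}(Q_{Cs})}\ls s^{\mu+\sigma}\|f\|_{\dB_{\sigma}(L_{p,\lambda})}$ and invoking the hypothesis $\sigma+\lambda+\alpha\le0$, i.e. $\mu+\sigma\le0$, the function $s\mapsto s^{\mu+\sigma}$ is nonincreasing, so $A_r\ls r^{\mu+\sigma}\|f\|_{\dB_{\sigma}(L_{p,\lambda})}$ and therefore $r^{-\sigma}\|M_{\alpha}h_r\|_{L_{q,\mu}(Q_r)}\ls r^{-\sigma}r^{-\mu}r^{\mu+\sigma}\|f\|_{\dB_{\sigma}(L_{p,\lambda})}=\|f\|_{\dB_{\sigma}(L_{p,\lambda})}$. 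Adding the local and tail bounds gives the desired estimate.

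The step I expect to require the most care is the comparison $\|f\chi_{Q_{2r}}\|_{L_{p,\lambda}(\R^n)}\ls\|f\|_{L_{p,\lambda}(Q_{2r})}$: an arbitrary subcube entering the whole-space Morrey norm may protrude from $Q_{2r}$, so this is not a single inclusion but needs an overlap/covering argument reducing to subcubes of comparable size. By contrast, the structural hypothesis $\sigma+\lambda+\alpha\le0$ enters only to sum the tail scales in the global estimate, while $\mu=\lambda+\alpha$ and $q\le(\lambda/\mu)p$ enter only through the cited whole-space boundedness. The weak-type case $p=1$ is handled identically upon replacing $L_{q,\mu}$ by $WL_{q,\mu}$ and the strong whole-space bound by its weak analogue, and the $B_{\sigma}$-statement follows verbatim by restricting every supremum to $r\ge1$.
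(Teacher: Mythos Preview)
Your argument is correct. The paper does not actually prove this theorem; it is quoted verbatim from \cite{KoMaNaSa2013RMC} and used as a black box input to the interpolation machinery, so there is no in-paper proof to compare against. What you have written is precisely the standard proof from that reference: the local/tail splitting $f=f\chi_{Q_{2r}}+f(1-\chi_{Q_{2r}})$, whole-space Morrey boundedness of $M_\alpha$ for the local piece, and a pointwise geometric bound plus the monotonicity coming from $\sigma+\mu\le0$ for the tail piece. The only point worth tightening is the one you already flag: the inequality $\|f\chi_{Q_{2r}}\|_{L_{p,\lambda}(\R^n)}\ls\|f\|_{L_{p,\lambda}(Q_{2r})}$ is most cleanly obtained as $\|f\chi_{Q_{2r}}\|_{L_{p,\lambda}(\R^n)}\ls\|f\|_{L_{p,\lambda}(Q_{4r})}$ (handling small cubes that overhang $\partial Q_{2r}$ by enclosing them in a subcube of $Q_{4r}$, and large cubes by the condition $\lambda+n/p\ge0$), after which the doubling of $r^{-\sigma}$ absorbs the factor $4$.
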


\begin{rem}
Let $\alpha=0$ in the theorem above.
Then we get the boundedness of the Hardy-Littlewood maximal operator $M$ 
on $B_{\sigma}(L_{p,\lambda})(\R^n)$ if $p\in(1,\infty)$,
and 
from $B_{\sigma}(L_{1,\lambda})(\R^n)$ to $B_{\sigma}(WL_{1,\lambda})(\R^n)$
if $p=1$.
\end{rem}

Using Theorem~\ref{thm:M:Bs(Mor)} and Example~\ref{exmp:MC3},
we have the following:

\begin{thm}\label{thm:M:Bwu(Mor)}
Let $\alpha\in[0,n)$, 
$p,q\in[1,\infty)$,
$\lambda\in[-n/p,0)$, $\mu\in[-n/q,0)$,
$u\in(0,\infty]$, $\Theta\in\iTheta$,
and let
\begin{equation*}\label{M w}
 w(r)=r^{-\sigma}\Theta(r^{\tau}),  \quad
 \sigma,\tau\in(0,\infty) \ \text{with} \ \sigma>\tau.
\end{equation*}
Assume that
\begin{equation*}
 \mu=\lambda+\alpha, \quad q\le(\lambda/\mu)p  
 \quad\text{and}\quad \sigma+\lambda+\alpha\le 0.
\end{equation*}
Then the operator $M_{\alpha}$ is bounded 
from $B_w^u(L_{p,\lambda})(\R^n)$ to $B_w^u(L_{q,\mu})(\R^n)$
if $p\in(1,\infty)$, 
from $B_w^u(L_{1,\lambda})(\R^n)$ to $B_w^u(WL_{q,\mu})(\R^n)$
if $p=1$.
The same conclusion holds for $\dB_w^u(L_{p,\lambda})(\R^n)$.
\end{thm}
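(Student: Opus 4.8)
The plan is to exhibit $B_w^u(L_{p,\lambda})$ and $B_w^u(L_{q,\mu})$ as real interpolation spaces between two $B_\sigma$-Morrey spaces, to invoke the endpoint boundedness already recorded in Theorem~\ref{thm:M:Bs(Mor)}, and to pass from the endpoints to the interpolated spaces through the $K$-functional comparison set up at the beginning of Section~\ref{s:bdd}.

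First I would fix the endpoint exponents. Writing $w(r)=r^{-\sigma}\Theta(r^{\tau})$ with $\sigma>\tau>0$, put $\sigma_0=\sigma$ and $\sigma_1=\sigma-\tau$, so that $\sigma_0,\sigma_1\in(0,\infty)$, $\sigma_0>\sigma_1$, $\sigma_0-\sigma_1=\tau$, and hence $w(r)=r^{-\sigma_0}\Theta(r^{\sigma_0-\sigma_1})$. By Remark~\ref{rem:w Theta} this $w$ belongs to $\cW^*$. Since Morrey and weak Morrey spaces enjoy the decomposition property, Corollary~\ref{cor:IP1} (the form of Example~\ref{exmp:MC3} that stays valid for every $u\in(0,\infty]$ once $\sigma_0,\sigma_1>0$) applies with $E=L_{p,\lambda}$, $E=L_{q,\mu}$ and $E=WL_{q,\mu}$, giving
\[
 (B_{\sigma_0}(E),B_{\sigma_1}(E),\Theta)_{u,\,[1,\infty)}=B_w^u(E),
\]
together with the homogeneous identification $(\dB_{\sigma_0}(E),\dB_{\sigma_1}(E),\Theta)_{u}=\dB_w^u(E)$.

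Next I would check the hypotheses of Theorem~\ref{thm:M:Bs(Mor)} at both endpoints. The conditions $\mu=\lambda+\alpha$ and $q\le(\lambda/\mu)p$ involve only $p,q,\lambda,\mu,\alpha$ and are assumed; the remaining condition $\sigma_i+\lambda+\alpha\le0$ holds for $i=0$ by the hypothesis $\sigma+\lambda+\alpha\le0$, and for $i=1$ \emph{a fortiori} because $\sigma_1=\sigma-\tau<\sigma$. Thus $M_\alpha$ is bounded from $B_{\sigma_i}(L_{p,\lambda})$ to $B_{\sigma_i}(L_{q,\mu})$ when $p\in(1,\infty)$, and to $B_{\sigma_i}(WL_{q,\mu})$ when $p=1$, for $i=0,1$, and the same holds on the homogeneous spaces $\dB_{\sigma_i}$.

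Finally I would transfer these endpoint bounds through the comparison set up at the start of Section~\ref{s:bdd}. The operator $M_\alpha$ is sublinear with $M_\alpha f\ge0$, so it satisfies \eqref{subdiff} with $C=1$; the target couple spaces $B_{\sigma_i}(L_{q,\mu})$ (resp. $B_{\sigma_i}(WL_{q,\mu})$) satisfy the lattice property, inherited from the local lattice property \eqref{lattice} of Morrey and weak Morrey spaces. Hence, with $A_i=B_{\sigma_i}(L_{p,\lambda})$ and $B_i=B_{\sigma_i}(L_{q,\mu})$, one obtains $K(r,M_\alpha f;B_0,B_1)\le C\,K(r,f;A_0,A_1)$. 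Applying $\|\Theta(r^{-1})\,\cdot\,\|_{L^u([1,\infty),dr/r)}$ to this inequality yields boundedness from $(A_0,A_1,\Theta)_{u,\,[1,\infty)}$ to $(B_0,B_1,\Theta)_{u,\,[1,\infty)}$, and the identifications above convert this into the claimed boundedness on $B_w^u$; the homogeneous statement follows identically using $L^u((0,\infty),dr/r)$ and the $\dB$-identifications. The step I expect to require the most care is the bookkeeping that the single hypothesis $\sigma+\lambda+\alpha\le0$ simultaneously secures the endpoint hypotheses at both $\sigma_0$ and $\sigma_1$, and that the weak-type endpoint $p=1$ still fits the lattice/\eqref{subdiff} framework so that the $K$-functional comparison remains legitimate.
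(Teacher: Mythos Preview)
Your proposal is correct and follows essentially the same approach as the paper, which simply states ``Using Theorem~\ref{thm:M:Bs(Mor)} and Example~\ref{exmp:MC3}'' without further detail; you have filled in exactly the intended argument, including the careful choice $\sigma_0=\sigma$, $\sigma_1=\sigma-\tau>0$ so that Corollary~\ref{cor:IP1} applies uniformly for all $u\in(0,\infty]$, and the verification that the sublinear/lattice framework at the start of Section~\ref{s:bdd} legitimizes the $K$-functional comparison for $M_\alpha$.
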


Taking $\lambda=-n/p$ and $\mu=-n/q$ in Theorem~\ref{thm:M:Bwu(Mor)},
we have the following:

\begin{cor}\label{cor:M:LocM}
Let $\alpha\in[0,n)$,
$p,q\in[1,\infty)$, $u\in(0,\infty]$, $\Theta\in\iTheta$,
and let
\begin{equation*}
 \tw(r)=w(r)/r, \quad
 w(r)=r^{-\sigma}\Theta(r^{\tau}),  \quad
 \sigma,\tau\in(0,\infty) \ \text{with} \ \sigma>\tau.
\end{equation*}
Assume that
\begin{equation*}
 -n/q=-n/p+\alpha  
 \quad\text{and}\quad \sigma-n/p+\alpha\le 0.
\end{equation*}
Then the operator $M_{\alpha}$ is bounded 
from $LM_{pu,\tw}(\R^n)$ to $LM_{qu,\tw}(\R^n)$
if $p\in(1,\infty)$, 
from $LM_{1u,\tw}(\R^n)$ to $WLM_{qu,\tw}(\R^n)$
if $p=1$.
\end{cor}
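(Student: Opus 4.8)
The plan is to obtain this as the direct specialization of Theorem~\ref{thm:M:Bwu(Mor)} to the endpoint Morrey parameters $\lambda=-n/p$ and $\mu=-n/q$, so that no new analysis is required beyond translating the hypotheses and rewriting the spaces.

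First I would recall that, by the remarks following Definition~\ref{defn:MC}, the choice $\lambda=-n/p$ gives $L_{p,\lambda}(\R^n)=L^p(\R^n)$, while $\mu=-n/q$ gives $L_{q,\mu}(\R^n)=L^q(\R^n)$ and $WL_{q,\mu}(\R^n)=WL^q(\R^n)$. Together with the identifications $\dB_w^u(L^p)(\R^n)=LM_{pu,\tw}(\R^n)$ and $\dB_w^u(WL^q)(\R^n)=WLM_{qu,\tw}(\R^n)$ with $\tw(r)=w(r)/r$, stated at the start of Section~\ref{s:bdd}, the source and target spaces in Theorem~\ref{thm:M:Bwu(Mor)} become exactly the local Morrey-type spaces appearing in the corollary.

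Next I would check that the three structural hypotheses of Theorem~\ref{thm:M:Bwu(Mor)} reduce to the two conditions stated here. The relation $\mu=\lambda+\alpha$ becomes $-n/q=-n/p+\alpha$, and $\sigma+\lambda+\alpha\le0$ becomes $\sigma-n/p+\alpha\le0$, matching the hypotheses verbatim. The remaining Morrey-type exponent condition $q\le(\lambda/\mu)p$ is the only point that requires a computation: since $\lambda/\mu=(-n/p)/(-n/q)=(n/p)/(n/q)=q/p$, it reads $q\le(q/p)p=q$, which holds automatically with equality. Thus all hypotheses of Theorem~\ref{thm:M:Bwu(Mor)} are met with $w(r)=r^{-\sigma}\Theta(r^{\tau})$, $\sigma>\tau>0$.

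Applying Theorem~\ref{thm:M:Bwu(Mor)} in its homogeneous ($\dB$) form and rewriting the spaces via the identifications above then yields the asserted boundedness of $M_{\alpha}$ from $LM_{pu,\tw}(\R^n)$ to $LM_{qu,\tw}(\R^n)$ for $p\in(1,\infty)$, and from $LM_{1u,\tw}(\R^n)$ to $WLM_{qu,\tw}(\R^n)$ for $p=1$. Since the whole argument is a pure specialization, there is no genuine obstacle; the single thing worth verifying is that the endpoint exponent condition $q\le(\lambda/\mu)p$ does not fail, and as just observed it holds with equality.
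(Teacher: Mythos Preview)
Your proof is correct and is exactly the paper's own approach: the corollary is obtained by specializing Theorem~\ref{thm:M:Bwu(Mor)} to $\lambda=-n/p$ and $\mu=-n/q$, with the space identifications and the automatic validity of $q\le(\lambda/\mu)p$ precisely as you describe.
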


For necessary and sufficient conditions for the boundedness of $M$
on local Morrey-type spaces, see \cite{BuGu2004}.

\subsection{Singular and fractional integral operators}\label{ss:SI}

We consider sublinear operators $T$ 
which satisfy \eqref{subdiff} and
the following condition:
There exist constants $\alpha\in[0,n)$ and $C\in(0,\infty)$
such that, for all $f\in L^1_{\comp}(\R^n)$,
\begin{equation}\label{size}
 |Tf(x)|
 \le 
 C\int_{\R^n} \frac{|\Omega(x-y)|}{|x-y|^{n-\alpha}}|f(y)|\,dy,
 \quad
 x\notin \supp f,
\end{equation}
where $\Omega$ is a function on $\R^n$ which is homogeneous of degree zero 
and $\Omega\in L^{\tp}(S^{n-1})$ for some $\tp\in[1,\infty]$.
For example, 
singular and fractional integral operators satisfy \eqref{size} with $\Omega\equiv1$. 
More precisely,
the singular integral operator $T$ is defined by 
\begin{equation}\label{SI}
 T f(x)=\int_{\R^n} K(x,y)f(y)\,dy,
 \quad
 x\notin \supp f,
 \quad
 f\in L^1_{\comp}(\R^n)
\end{equation}
with kernel $K(x,y)$ satisfying the condition
\begin{equation}\label{K}
 |K(x,y)|\le C|x-y|^{-n}, \quad x\ne y, 
\end{equation}
and some regularity conditions.
(For regularity conditions,
see Yabuta \cite{Yabuta1985} and references therein.)
Then the singular integral operator $T$ satisfies the condition \eqref{size} with $\alpha=0$
and
it is bounded on $L^p(\R^n)$, $p\in(1,\infty)$,
and from $L^1(\R^n)$ to $WL^1(\R^n)$.
Moreover, 
under the assumption that
$p\in[1,\infty)$ and $\lambda\in[-n/p,0)$,
$T$ can be extended to a bounded operator on $L_{p,\lambda}(\R^n)$ if $p\in(1,\infty)$,
and 
from $L_{1,\lambda}(\R^n)$ to $WL_{1,\lambda}(\R^n)$ if $p=1$, 
see \cite{ChiarenzaFrasca1987,Nakai1994MN,Peetre1966}.
Fractional integral operators $I_{\alpha}$, $\alpha\in(0,n)$, are defined by
\begin{equation*}\label{FI}
 I_{\alpha}f(x)=\int_{\R^n} \frac{f(y)}{|x-y|^{n-\alpha}}\,dy.
\end{equation*}
Then $I_{\alpha}$ satisfies \eqref{size} with this $\alpha$
and 
it is bounded from $L^p(\R^n)$ to $L^q(\R^n)$, $1<p<q<\infty$, $-n/p+\alpha=-n/q$,
and from $L^1(\R)$ to $WL^{n/(n-\alpha)}(\R)$.
Moreover, 
under the assumption that
$p,q\in[1,\infty)$, $\lambda\in[-n/p,0)$, $\mu\in[-n/q,0)$,
$\lambda+\alpha=\mu$ and $q\le(\lambda/\mu)p$,
$I_{\alpha}$ can be extended to a bounded operator 
from $L_{p,\lambda}(\R^n)$ to $L_{q,\mu}(\R^n)$ if $p\in(1,\infty)$,  
and from $L_{1,\lambda}(\R^n)$ to $WL_{q,\mu}(\R^n)$ if $p=1$,
see \cite{Adams1975,ChiarenzaFrasca1987}.

For the $L^p$-boundedness of Calder\'on-Zygmund singular integral operators
\begin{equation*}\label{CZ Omega}
 T_{\Omega}f(x)=p.v.\int_{\R^n} \frac{\Omega(x-y)}{|x-y|^{n}}f(y)\,dy,
\end{equation*}
and
fractional integral operators with rough kernel
\begin{equation*}\label{FI Omega}
 I_{\Omega,\alpha}f(x)=\int_{\R^n} \frac{\Omega(x-y)}{|x-y|^{n-\alpha}}f(y)\,dy,
\end{equation*}
see \cite{CalderonZygmund1956} and \cite{Muckenhoupt1960}, respectively.
See also 
\cite{DingYangZhow1998,GuliyevAliyevKaramanShukurov2011,LuLuYang2002,SorWei1994},
for C.~Fefferman's singular multipliers, 
Ricci-Stein's oscillatory singular integral, 
the Littlewood-Paley operator, Marcinkiewicz operator,
the Bochner-Riesz operator at the critical index and so on.

\begin{rem}\label{rem:def Mor}
Let $T$ be a sublinear operator satisfying 
\eqref{subdiff} and \eqref{size} for some $\alpha\in[0,n)$. 
Let $p,q\in[1,\infty)$, $\lambda\in[-n/p,0)$, $\mu\in[-n/q,0)$ 
and $\mu=\lambda+\alpha$.
Assume that $T$ is bounded 
from $L^p(\R^n)$ to $L^q(\R^n)$ or to $WL^q(\R^n)$.
Then, for $f\in L_{p,\lambda}(\R^n)$ and $R>0$,
$T(f\chi_R)$ is well defined and 
$\lim_{R\to\infty}T(f\chi_R)$ exists a.e.\,on $\R^n$, or in $L^q_{\loc}(\R^n)$,
with some additional assumption on $\Omega$ in \eqref{size}.
Actually, 
$f\chi_R\in L^p(\R^n)$ and 
we can prove that
\begin{equation*}
 |T(f\chi_S)(x)-T(f\chi_R)(x)|\le C|T(f(\chi_S-\chi_R)(x)|\to0
\end{equation*}
as $R,S\to\infty$ for a.e.\ $\R^n$, or in $L^q_{\loc}(\R^n)$,
see \cite[Lemmas~3 and 4]{KoMaNaSa2013RMC}.
Then, 
letting $Tf=\lim_{R\to\infty}T(f\chi_R)$ for $f\in L_{p,\lambda}(\R^n)$,
we can define $T$ as a bounded operator 
from $L_{p,\lambda}(\R^n)$ to $L_{q,\mu}(\R^n)$ or to $WL_{q,\mu}(\R^n)$,
see \cite[Remark~15]{KoMaNaSa2013RMC} 
in which we point out that we need the condition \eqref{subdiff}.
For example, 
the operator $Tf=e^{i\|f\|_{L^p(\R^n)}}Mf$, 
where $M$ is the Hardy-Littlewood maximal operator,
is bounded on $L^p(\R^n)$ but not well defined on Morrey spaces in general.
\end{rem}

\begin{rem}\label{rem:def T Mor}
If $T$ is a singular integral operator defined by \eqref{SI},
then the equality
\begin{equation*}
 \lim_{R\to\infty}T(f\chi_R)(x)
 =
 T(f\chi_{Q(z,2r)})(x)+\int_{\R^n\setminus Q(z,2r)}K(x,y)f(y)\,dy
\end{equation*}
holds for a.e.\,$x\in Q(z,r)$ and for any $Q(z,r)$,
see \cite{Nakai1994MN,Nakai2010RMC,SawanoTanaka2005}.
See also Rosenthal and Triebel~\cite{RosenthalTriebel2014} 
for the extension of singular integral (Calder\'on-Zygmund) operators 
to Morrey spaces.
\end{rem}

\begin{thm}[\cite{KoMaNaSa2013RMC}]\label{thm:T:Bs(Mor)}
Let $\sigma\in[0,\infty)$ and $p,q\in[1,\infty)$, 
and let $\lambda\in[-n/p,0)$ and $\mu\in[-n/q,0)$.
Let $T$ be a sublinear operator defined on $L^1_{\comp}(\R^n)$ and
satisfy \eqref{subdiff} and 
\eqref{size} for some $\alpha\in[0,n)$ 
and $\Omega\in L^{\tp}(S^{n-1})$
with $\tp\in[1,\infty]$. 
Assume one of the following conditions:
\begin{enumerate}
\item $\mu=\lambda+\alpha$, $\tp\ge p'$ and $\sigma+\lambda+\alpha<0$,
\item $\mu=\lambda+\alpha$, $\tp\ge q$ and $\sigma+\lambda+n/\tp+\alpha<0$.
\end{enumerate}
Assume in addition $T$ can be extended to a bounded operator 
from $L_{p,\lambda}(\R^n)$ to $L_{q,\mu}(\R^n)$ 
or to $WL_{q,\mu}(\R^n)$.
Then $T$ can be further extended to a bounded operator 
from $B_{\sigma}(L_{p,\lambda})(\R^n)$ to $B_{\sigma}(L_{q,\mu})(\R^n)$
or to $B_{\sigma}(WL_{q,\mu})(\R^n)$, respectively.
The same conclusion holds for $\dB_{\sigma}(L_{p,\lambda})(\R^n)$.
\end{thm}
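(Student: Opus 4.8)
The plan is to prove the homogeneous statement for $\dB_{\sigma}(L_{p,\lambda})(\R^n)$ and to read off the inhomogeneous one by restricting the outer radius to $[1,\infty)$; the two arguments are identical except for the range of the outer radius $R$. First I would record that, by Remark~\ref{rem:def Mor}, condition \eqref{subdiff} lets us define $Tf=\lim_{R\to\infty}T(f\chi_R)$ unambiguously for $f\in L_{p,\lambda}(\R^n)$, hence for each $f$ in our space, so that only the norm estimate $\|Tf\|_{\dB_{\sigma}(L_{q,\mu})}\ls\|f\|_{\dB_{\sigma}(L_{p,\lambda})}$ remains. Unwinding the definition, this amounts to bounding $R^{-\sigma}\|Tf\|_{L_{q,\mu}(Q_R)}$ uniformly in $R>0$, and since $\|Tf\|_{L_{q,\mu}(Q_R)}$ is a supremum over subcubes $Q(z,s)\subset Q_R$, it suffices to control $s^{-\mu}\big(\frac1{|Q(z,s)|}\int_{Q(z,s)}|Tf|^q\big)^{1/q}$ by $\|f\|_{\dB_{\sigma}(L_{p,\lambda})}\,R^{\sigma}$ for every such subcube.

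For a fixed subcube I would split $f=f_1+f_2$ with $f_1=f\chi_{Q(z,2s)}$ and $f_2=f-f_1$, so that $|Tf|\le|Tf_1|+|Tf_2|$ by sublinearity. For the near part the assumed boundedness $L_{p,\lambda}(\R^n)\to L_{q,\mu}(\R^n)$ gives
\[
 s^{-\mu}\Big(\tfrac1{|Q(z,s)|}\int_{Q(z,s)}|Tf_1|^q\Big)^{1/q}\le\|Tf_1\|_{L_{q,\mu}}\ls\|f_1\|_{L_{p,\lambda}}\ls\|f\|_{L_{p,\lambda}(Q(z,2s))},
\]
the last step being the support property \eqref{support} of Morrey spaces. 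Since $Q(z,s)\subset Q_R$ forces $Q(z,2s)\subset Q_{2R}$, monotonicity of the Morrey norm in its domain together with $\|f\|_{L_{p,\lambda}(Q_{2R})}\le\|f\|_{\dB_{\sigma}(L_{p,\lambda})}(2R)^{\sigma}$ bounds this by $\ls\|f\|_{\dB_{\sigma}(L_{p,\lambda})}R^{\sigma}$, as required.

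The core is the far part. For $x\in Q(z,s)$ one has $x\notin\supp f_2$, so \eqref{size} and $|x-y|\sim 2^k s$ on the dyadic annuli $A_k=Q(z,2^{k+1}s)\setminus Q(z,2^k s)$ give $|Tf_2(x)|\ls\sum_{k\ge1}(2^k s)^{\alpha-n}\int_{A_k}|\Omega(x-y)||f(y)|\,dy$. Under hypothesis (i), where $\tp\ge p'$, I would use Hölder with the three exponents $p,\tp$ and $(1-1/p-1/\tp)^{-1}$ (nonnegative precisely because $\tp\ge p'$), together with the homogeneity estimate $\big(\int_{A_k}|\Omega(x-y)|^{\tp}dy\big)^{1/\tp}\ls(2^k s)^{n/\tp}\|\Omega\|_{L^{\tp}(S^{n-1})}$ and the Morrey estimate $\big(\int_{A_k}|f|^p\big)^{1/p}\ls(2^k s)^{n/p+\lambda}\|f\|_{L_{p,\lambda}(Q(z,2^{k+1}s))}$, to obtain $|Tf_2(x)|\ls\sum_{k\ge1}(2^k s)^{\alpha+\lambda}\|f\|_{L_{p,\lambda}(Q(z,2^{k+1}s))}$. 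Inserting $\|f\|_{L_{p,\lambda}(Q(z,2^{k+1}s))}\le\|f\|_{\dB_{\sigma}(L_{p,\lambda})}(R+2^{k+1}s)^{\sigma}$ and splitting the sum at $2^k s\sim R$, I would use $\alpha+\lambda=\mu<0$ on the range $2^k s\le R$ (the sum being dominated by its first term $\sim s^{\mu}R^{\sigma}$) and the strict inequality $\sigma+\lambda+\alpha<0$ on the range $2^k s>R$ (dominated by its first term $\sim R^{\mu+\sigma}\le s^{\mu}R^{\sigma}$, since $\mu<0$ and $s\le R$). This yields $\sup_{Q(z,s)}|Tf_2|\ls\|f\|_{\dB_{\sigma}(L_{p,\lambda})}s^{\mu}R^{\sigma}$, and hence the desired bound on $s^{-\mu}\big(\frac1{|Q(z,s)|}\int_{Q(z,s)}|Tf_2|^q\big)^{1/q}$; combining the two parts and taking suprema settles case (i) with the strong target.

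Case (ii) ($\tp\ge q$) I would treat by the same scheme but pairing the angular integration of $\Omega$ against the $L^q$-average over $Q(z,s)$ rather than against $f$; this costs an extra factor $(2^k s)^{n/\tp}$ in the annular bound and so replaces the summability condition by $\sigma+\lambda+n/\tp+\alpha<0$, exactly hypothesis (ii). For the weak target one replaces the $L^q$-average by the quantity defining $\|\cdot\|_{WL_{q,\mu}}$ and uses $\{|Tf|>t\}\subset\{|Tf_1|>t/2\}\cup\{|Tf_2|>t/2\}$: the near term is handled by the assumed $L_{1,\lambda}\to WL_{q,\mu}$ bound, while the far term, being controlled pointwise exactly as above, contributes nothing once $t$ exceeds its pointwise bound. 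The main obstacle I expect is not any single estimate but the bookkeeping that makes the far-part dyadic sum converge to precisely $s^{\mu}R^{\sigma}$: this is where the strict sign conditions are used and where the two kernel regimes must be separated, and it is also where one must be careful that the centre $z$ of the subcube can lie anywhere in $Q_R$, so that the dilated balls $Q(z,2^{k+1}s)$ must be compared with the concentric balls controlling the $\dB_{\sigma}$-norm.
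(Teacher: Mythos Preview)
The paper does not prove this theorem at all: it is quoted from \cite{KoMaNaSa2013RMC} as a known result (see the citation in the theorem header and Remark~\ref{rem:def Bs}) and is used only as input, together with the interpolation identity of Example~\ref{exmp:MC3}, to deduce Theorem~\ref{thm:T:Bwu(Mor)}. So there is no ``paper's own proof'' to compare against here. Your sketch is the standard direct argument one finds in that reference: split $f=f\chi_{Q(z,2s)}+f(1-\chi_{Q(z,2s)})$, use the assumed Morrey boundedness on the near piece, and control the far piece pointwise via \eqref{size} and a dyadic annular decomposition, feeding the result into the $\dB_\sigma$-norm via $Q(z,2^{k+1}s)\subset Q_{C(R+2^{k}s)}$. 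The geometric sums are handled exactly by $\mu<0$ for small $k$ and by the strict condition $\sigma+\lambda+\alpha<0$ (resp.\ $\sigma+\lambda+n/\tp+\alpha<0$) for large $k$, which is why equality is excluded.

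One point deserves tightening. You write that Remark~\ref{rem:def Mor} defines $Tf$ for $f\in L_{p,\lambda}(\R^n)$, ``hence for each $f$ in our space.'' But $B_\sigma(L_{p,\lambda})(\R^n)\not\subset L_{p,\lambda}(\R^n)$ when $\sigma>0$, so this does not follow. The well-definedness of $Tf=\lim_{R\to\infty}T(f\chi_R)$ for $f\in B_\sigma(L_{p,\lambda})$ is a separate claim (this is Remark~\ref{rem:def Bs}, proved in \cite[Subsection~6.4]{KoMaNaSa2013RMC}); it is established by applying \eqref{subdiff} and then your far-piece estimate to $T(f(\chi_S-\chi_R))$, where again the strict inequality on $\sigma$ is what makes the tail vanish. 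With that adjustment your outline is correct and matches the original source.
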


\begin{rem}\label{rem:def Bs}
Under the assumption in Theorem~\ref{thm:T:Bs(Mor)},
$\lim_{R\to\infty}T(f\chi_R)$ exists a.e.\,on $\R^n$, or in $L^q_{\loc}(\R^n)$, 
for $f\in B_{\sigma}(L_{p,\lambda})(\R^n)$
or $f\in\dB_{\sigma}(L_{p,\lambda})(\R^n)$.
Then, letting $Tf=\lim_{R\to\infty}T(f\chi_R)$, we have the desired boundedness
(see \cite[Subsection~6.4]{KoMaNaSa2013RMC}).
\end{rem}

In Theorem~\ref{thm:T:Bs(Mor)} we cannot take $\sigma+\lambda+\alpha=0$ 
differently from Theorem~\ref{thm:M:Bs(Mor)},
see \cite[Remark~9]{KoMaNaSa2013RMC}.

Using Theorem~\ref{thm:T:Bs(Mor)} and Example~\ref{exmp:MC3},
we have the following:

\begin{thm}\label{thm:T:Bwu(Mor)}
Let $p,q\in[1,\infty)$, 
$\lambda\in[-n/p,0)$, $\mu\in[-n/q,0)$,
$u\in(0,\infty]$, $\Theta\in\iTheta$,
and let
\begin{equation}\label{T w}
 w(r)=r^{-\sigma}\Theta(r^{\tau}),  \quad
 \sigma,\tau\in(0,\infty) \ \text{with} \ \sigma>\tau.
\end{equation}
Let $T$ be a sublinear operator defined on $L^1_{\comp}(\R^n)$ and
satisfy \eqref{subdiff} and 
\eqref{size} for some $\alpha\in[0,n)$ 
and $\Omega\in L^{\tp}(S^{n-1})$
with $\tp\in[1,\infty]$. 
Assume one of the following conditions:
\begin{enumerate}
\item $\mu=\lambda+\alpha$, $\tp\ge p'$ and $\sigma+\lambda+\alpha<0$,
\item $\mu=\lambda+\alpha$, $\tp\ge q$ and $\sigma+\lambda+n/\tp+\alpha<0$.
\end{enumerate}
Assume in addition $T$ can be extended to a bounded operator 
from $L_{p,\lambda}(\R^n)$ to $L_{q,\mu}(\R^n)$ 
or to $WL_{q,\mu}(\R^n)$.
Then $T$ can be further extended to a bounded operator 
from $B_w^u(L_{p,\lambda})(\R^n)$ to $B_w^u(L_{q,\mu})(\R^n)$
or to $B_w^u(WL_{q,\mu})(\R^n)$, respectively.
The same conclusion holds for $\dB_w^u(L_{p,\lambda})(\R^n)$.
\end{thm}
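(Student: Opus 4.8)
The plan is to reduce the statement to the already-established endpoint result Theorem~\ref{thm:T:Bs(Mor)} via the interpolation identity of Example~\ref{exmp:MC3}, exactly as the analogous reduction was carried out for $M_{\alpha}$ in Theorem~\ref{thm:M:Bwu(Mor)}. First I would choose the two endpoint exponents compatible with \eqref{T w}: set $\sigma_0=\sigma$ and $\sigma_1=\sigma-\tau$, so that $w(r)=r^{-\sigma_0}\Theta(r^{\sigma_0-\sigma_1})$. Since $\sigma>\tau>0$ we have $0<\sigma_1<\sigma_0$, hence $w\in\cW^*$ and the hypotheses of Example~\ref{exmp:MC3} (and Corollary~\ref{cor:IP1}) on $\sigma_0,\sigma_1$ are met, with $\sigma_0,\sigma_1\in(0,\infty)$ in particular when $u<\infty$.

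Next I would verify that whichever of the conditions (i), (ii) is assumed for $\sigma$ also holds at $\sigma_1$. Because $\sigma_1=\sigma-\tau<\sigma$ while $\lambda$, $\alpha$ and $n/\tp$ are unchanged, the strict inequality $\sigma+\lambda+\alpha<0$ (respectively $\sigma+\lambda+n/\tp+\alpha<0$) immediately yields $\sigma_1+\lambda+\alpha<0$ (respectively $\sigma_1+\lambda+n/\tp+\alpha<0$). Thus Theorem~\ref{thm:T:Bs(Mor)} applies at both endpoints, and $T$ extends to a bounded operator from $B_{\sigma_i}(L_{p,\lambda})(\R^n)$ to $B_{\sigma_i}(L_{q,\mu})(\R^n)$ (or to $B_{\sigma_i}(WL_{q,\mu})(\R^n)$) for $i=0,1$. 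By Remark~\ref{rem:def Bs} the extension $Tf=\lim_{R\to\infty}T(f\chi_R)$ is the same on both endpoint spaces, and any $f\in B_w^u(L_{p,\lambda})(\R^n)$ lies in the sum $B_{\sigma_0}(L_{p,\lambda})(\R^n)+B_{\sigma_1}(L_{p,\lambda})(\R^n)$, so $Tf$ is unambiguously defined on the larger space.

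With boundedness at the endpoints secured, I would invoke the abstract $K$-functional estimate established at the beginning of Section~\ref{s:bdd}. Putting $A_i=B_{\sigma_i}(L_{p,\lambda})(\R^n)$ and $B_i=B_{\sigma_i}(L_{q,\mu})(\R^n)$ (or $B_i=B_{\sigma_i}(WL_{q,\mu})(\R^n)$), and using \eqref{subdiff} together with the fact that the target Morrey and weak Morrey spaces satisfy the lattice property \eqref{lattice}, which is needed when $T$ is merely sublinear, one obtains $K(r,Tf;B_0,B_1)\le C_T\,K(r,f;A_0,A_1)$. Multiplying by $\Theta(r^{-1})$ and taking the $L^u([1,\infty),dr/r)$ quasi-norm shows that $T$ maps $(A_0,A_1,\Theta)_{u,\,[1,\infty)}$ boundedly into $(B_0,B_1,\Theta)_{u,\,[1,\infty)}$. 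Finally, since $L_{p,\lambda}$, $L_{q,\mu}$ and $WL_{q,\mu}$ all enjoy the restriction and decomposition properties, Example~\ref{exmp:MC3} identifies these two interpolation spaces with $B_w^u(L_{p,\lambda})(\R^n)$ and $B_w^u(L_{q,\mu})(\R^n)$ (respectively $B_w^u(WL_{q,\mu})(\R^n)$), which gives the asserted boundedness; the homogeneous statement follows verbatim by replacing $(\cdot,\cdot,\Theta)_{u,\,[1,\infty)}$ with $(\cdot,\cdot,\Theta)_{u}$ and each $B$ with $\dB$.

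The main obstacle I anticipate is not the interpolation step itself, which is essentially bookkeeping once the endpoints are in place, but the two compatibility checks: confirming that the chosen pair $\sigma_0>\sigma_1$ keeps both endpoints inside the admissible range of Theorem~\ref{thm:T:Bs(Mor)}, and ensuring that the limiting definition $Tf=\lim_{R\to\infty}T(f\chi_R)$ agrees on the overlap $A_0\cap A_1$, so that the $K$-functional inequality is applied to a single well-defined operator rather than to two a priori distinct extensions.
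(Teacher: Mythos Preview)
Your proposal is correct and follows essentially the same route as the paper, which simply records that the result is obtained ``Using Theorem~\ref{thm:T:Bs(Mor)} and Example~\ref{exmp:MC3}''; you have merely spelled out the endpoint choice $\sigma_0=\sigma$, $\sigma_1=\sigma-\tau$, the monotonicity check on conditions (i)--(ii), the $K$-functional inequality from the beginning of Section~\ref{s:bdd}, and the well-definedness of $Tf$ that the paper addresses in Remark~\ref{rem:def Bwu}.
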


\begin{rem}\label{rem:def Bwu}
Let $f\in B_w^u(L_{p,\lambda})(\R^n)$ and $R>0$.
Then $f\chi_R\in B_{\tau}(L_{p,\lambda})(\R^n)$ and $f(1-\chi_R)\in B_{\sigma}(L_{p,\lambda})(\R^n)$.
Hence
$T(f\chi_R)$ and $T(f(1-\chi_R))$ are well defined by Theorem~\ref{thm:T:Bs(Mor)}.
Moreover, by Remark~\ref{rem:def Bs}, 
$Tf=\lim_{R\to\infty}T(f\chi_R)$ is well defined. 
\end{rem}

\begin{cor}\label{cor:SI:Bwu(Mor)}
Let $T$ be a singular integral operator 
with kernel $K(x,y)$ satisfying the condition \eqref{K}.
Let $p\in[1,\infty)$, $\lambda\in[-n/p,0)$,
$u\in(0,\infty]$, $\Theta\in\iTheta$,
and define $w$ by \eqref{T w}.
Assume that $\sigma+\lambda<0$.
If $T$ is bounded on $L^{p}(\R^n)$ with $p\in(1,\infty)$, 
then $T$ can be extended to a bounded operator on $B_w^u(L_{p,\lambda})(\R^n)$.
If $T$ is bounded from $L^1(\R^n)$ to $WL^1(\R^n)$,
then $T$ can be extended to a bounded operator 
from $B_w^u(L_{1,\lambda})(\R^n)$ to $B_w^u(WL_{1,\lambda})(\R^n)$.
The same conclusion holds for $\dB_w^u(L_{p,\lambda})(\R^n)$.
\end{cor}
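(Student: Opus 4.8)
The plan is to deduce this corollary directly from Theorem~\ref{thm:T:Bwu(Mor)} by specializing to the case of a singular integral operator, which corresponds to the parameter choice $\alpha=0$ together with the trivial kernel factor $\Omega\equiv1$. Thus essentially nothing new has to be proved; the whole task is to check that the hypotheses of Theorem~\ref{thm:T:Bwu(Mor)} are satisfied under the weaker-looking assumptions stated in the corollary.

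First I would note that a singular integral operator $T$ given by \eqref{SI} is linear, so it automatically satisfies the difference condition \eqref{subdiff} with constant $1$, as observed just after \eqref{subdiff}. Next, the kernel bound \eqref{K} shows that $T$ obeys the size estimate \eqref{size} with $\alpha=0$ and $\Omega\equiv1$; since the constant function $\Omega\equiv1$ belongs to $L^{\tp}(S^{n-1})$ for every $\tp\in[1,\infty]$, one may take $\tp=\infty$. With $\alpha=0$ the natural target exponents are $q=p$ and $\mu=\lambda$, and then hypothesis~(i) of Theorem~\ref{thm:T:Bwu(Mor)} becomes $\mu=\lambda+\alpha=\lambda$, $\tp=\infty\ge p'$, and $\sigma+\lambda+\alpha=\sigma+\lambda<0$; the last inequality is precisely the assumption $\sigma+\lambda<0$ of the corollary. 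The weight $w$ defined by \eqref{T w} already has the form demanded by Theorem~\ref{thm:T:Bwu(Mor)}, and $\Theta\in\iTheta$ by hypothesis, so the structural conditions are in place.

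The one remaining ingredient is the $L_{p,\lambda}$-boundedness of $T$, which is the standing assumption of Theorem~\ref{thm:T:Bwu(Mor)} and is not assumed directly in the corollary. This is supplied by the classical Morrey-space theory recalled in the discussion preceding Remark~\ref{rem:def Mor}: if $T$ is bounded on $L^p(\R^n)$ for some $p\in(1,\infty)$, then it extends to a bounded operator on $L_{p,\lambda}(\R^n)$, while if $T$ is bounded from $L^1(\R^n)$ to $WL^1(\R^n)$ it extends to a bounded operator from $L_{1,\lambda}(\R^n)$ to $WL_{1,\lambda}(\R^n)$. Inserting the strong-type case for $p\in(1,\infty)$ and the weak-type case $q=p=1$, $\mu=\lambda$ for $p=1$ into Theorem~\ref{thm:T:Bwu(Mor)} then produces the boundedness on $B_w^u(L_{p,\lambda})(\R^n)$ and the boundedness from $B_w^u(L_{1,\lambda})(\R^n)$ to $B_w^u(WL_{1,\lambda})(\R^n)$, respectively; running the identical argument with $\dB_w^u$ replacing $B_w^u$ gives the homogeneous statement.

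Because the corollary is a pure specialization, I do not expect any genuine obstacle. The only point that requires a little care is the endpoint $p=1$, where $p'=\infty$ forces the choice $\tp=\infty$ in condition~(i) (so that $\tp\ge p'$ holds with equality) and where the conclusion lands in the weak Morrey space $WL_{1,\lambda}$; here one must invoke the weak-type branch of Theorem~\ref{thm:T:Bwu(Mor)} rather than the strong-type one.
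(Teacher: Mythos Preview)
Your proposal is correct and follows exactly the approach the paper intends: the corollary is stated without proof as an immediate specialization of Theorem~\ref{thm:T:Bwu(Mor)} with $\alpha=0$, $\Omega\equiv1$, $\tp=\infty$, $q=p$, $\mu=\lambda$, together with the classical Morrey/weak-Morrey boundedness of singular integral operators recalled just before Remark~\ref{rem:def Mor}. Your verification of the hypotheses, including the endpoint $p=1$, is accurate.
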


\begin{cor}\label{cor:FI:Bwu(Mor)}
Let $\alpha\in(0,n)$, $p,q\in[1,\infty)$, 
$\lambda\in[-n/p,0)$, $\mu\in[-n/q,0)$,
$u\in(0,\infty]$, $\Theta\in\iTheta$,
and define $w$ by \eqref{T w}.
Assume that $\lambda+\alpha=\mu$, 
$q\le(\lambda/\mu)p$ and $\sigma+\mu<0$.
Then fractional integral operators $I_{\alpha}$ are bounded 
from $B_w^u(L_{p,\lambda})(\R^n)$ to $B_w^u(L_{q,\mu})(\R^n)$ if $p\in(1,\infty)$, 
and from $B_w^u(L_{1,\lambda})(\R^n)$ to $B_w^u(WL_{q,\mu})(\R^n)$ if $p=1$.
The same conclusion holds for $\dB_w^u(L_{p,\lambda})(\R^n)$.
\end{cor}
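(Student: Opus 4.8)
The plan is to derive this corollary directly from Theorem~\ref{thm:T:Bwu(Mor)} by specializing $T=I_\alpha$ and checking its hypotheses one by one. Since $I_\alpha$ is linear, it automatically satisfies the difference condition \eqref{subdiff} with constant $C=1$. Moreover $I_\alpha$ satisfies the size condition \eqref{size} with the homogeneous factor $\Omega\equiv1$ (indeed its kernel is exactly $|x-y|^{-(n-\alpha)}$ for $x\notin\supp f$), and $\Omega\equiv1\in L^{\tp}(S^{n-1})$ for every $\tp\in[1,\infty]$. I would therefore take $\tp=\infty$, which is the choice that renders the rough-kernel requirements harmless.

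With $\tp=\infty$ I would verify condition (i) of Theorem~\ref{thm:T:Bwu(Mor)}. The requirement $\tp\ge p'$ holds for all $p\in[1,\infty)$ because $\tp=\infty$; in particular this covers the endpoint $p=1$, where $p'=\infty$ and the choice $\tp=\infty$ is precisely what is needed, and this is the only place the $L^\infty$-membership of $\Omega\equiv1$ is used. The relation $\mu=\lambda+\alpha$ is among the standing assumptions, and substituting it shows that the inequality $\sigma+\lambda+\alpha<0$ appearing in condition (i) coincides with the hypothesis $\sigma+\mu<0$. Hence condition (i) is met with the given weight $w(r)=r^{-\sigma}\Theta(r^{\tau})$ from \eqref{T w} and $\Theta\in\iTheta$.

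The last ingredient is the base boundedness of $I_\alpha$ from $L_{p,\lambda}(\R^n)$ to $L_{q,\mu}(\R^n)$ when $p\in(1,\infty)$, and from $L_{1,\lambda}(\R^n)$ to $WL_{q,\mu}(\R^n)$ when $p=1$. This is the classical Adams / Chiarenza--Frasca result recalled earlier in the text, valid precisely under $\lambda+\alpha=\mu$ and $q\le(\lambda/\mu)p$, which are exactly our assumptions. With all hypotheses of Theorem~\ref{thm:T:Bwu(Mor)} in force, that theorem yields the asserted boundedness from $B_w^u(L_{p,\lambda})(\R^n)$ to $B_w^u(L_{q,\mu})(\R^n)$ (respectively to $B_w^u(WL_{q,\mu})(\R^n)$) and the same statement for $\dB_w^u$; the well-definedness of the extension $I_\alpha f=\lim_{R\to\infty}I_\alpha(f\chi_R)$ is supplied by Remarks~\ref{rem:def Bwu} and \ref{rem:def Bs}. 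I do not expect any genuine analytic obstacle here: the heavy lifting---the interpolation identity together with the restriction and decomposition machinery---is already encapsulated in Theorem~\ref{thm:T:Bwu(Mor)}, so the only care required is the bookkeeping that places $I_\alpha$ into case (i) with $\tp=\infty$ and the correct reading of the exponent inequalities $\mu=\lambda+\alpha$ and $\sigma+\mu<0$.
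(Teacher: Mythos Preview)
Your proposal is correct and matches the paper's intended argument: the corollary is simply the specialization of Theorem~\ref{thm:T:Bwu(Mor)} to $T=I_\alpha$ with $\Omega\equiv1$ and $\tp=\infty$, invoking the Adams/Chiarenza--Frasca Morrey bounds recalled just before that theorem as the required base estimate. The paper gives no separate proof of this corollary, and your verification of condition~(i) (in particular the identification $\sigma+\lambda+\alpha=\sigma+\mu$) is exactly the bookkeeping needed.
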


Further,
Theorem~\ref{thm:T:Bwu(Mor)} is valid for 
the Calder\'on-Zygmund singular integral operators,
fractional integral operators with rough kernel,
C.~Fefferman's singular multipliers, 
the Littlewood-Paley operator, the Marcinkiewicz operator,
Ricci-Stein's oscillatory singular integral, 
the Bochner-Riesz operator at the critical index, and so on.

Taking $\lambda=-n/p$ and $\mu=-n/q$ in Theorem~\ref{thm:T:Bwu(Mor)},
we have the following:

\begin{cor}\label{cor:T:locM)}
Let $p,q\in[1,\infty)$, 
$u\in(0,\infty]$, $\Theta\in\iTheta$,
and let
\begin{equation*}\label{T tw}
 \tw(r)=w(r)/r, \quad
 w(r)=r^{-\sigma}\Theta(r^{\tau}),  \quad
 \sigma,\tau\in(0,\infty) \ \text{with} \ \sigma>\tau.
\end{equation*}
Let $T$ be a sublinear operator defined on $L^1_{\comp}(\R^n)$ and
satisfy \eqref{subdiff} and 
\eqref{size} for some $\alpha\in[0,n)$ 
and $\Omega\in L^{\tp}(S^{n-1})$
with $\tp\in[1,\infty]$. 
Assume one of the following conditions:
\begin{enumerate}
\item $-n/q=-n/p+\alpha$, $\tp\ge p'$ and $\sigma-n/p+\alpha<0$,
\item $-n/q=-n/p+\alpha$, $\tp\ge q$ and $\sigma-n/p+n/\tp+\alpha<0$.
\end{enumerate}
Assume in addition $T$ is a bounded operator 
from $L^p(\R^n)$ to $L^q(\R^n)$ 
or to $WL^q(\R^n)$.
Then $T$ can be extended to a bounded operator 
from $LM_{pu,\tw}(\R^n)$ to $LM_{qu,\tw}(\R^n)$
or to $WLM_{qu,\tw}(\R^n)$, respectively.
\end{cor}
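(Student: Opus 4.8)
The plan is to obtain this statement as the special case $\lambda=-n/p$, $\mu=-n/q$ of Theorem~\ref{thm:T:Bwu(Mor)}, exactly as the sentence preceding the corollary announces. First I would check admissibility of these boundary exponents: Theorem~\ref{thm:T:Bwu(Mor)} permits $\lambda\in[-n/p,0)$ and $\mu\in[-n/q,0)$, and since each interval is closed at its left endpoint, the choices $\lambda=-n/p$ and $\mu=-n/q$ are allowed. The weight $w(r)=r^{-\sigma}\Theta(r^{\tau})$ with $\sigma>\tau>0$ and $\Theta\in\iTheta$ is precisely of the form \eqref{T w} used there, so the structural hypotheses on $w$, $u$, $\Theta$, $T$ and $\Omega\in L^{\tp}(S^{n-1})$ carry over verbatim.

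Next I would invoke the identifications already recorded in the paper. By Definition~\ref{defn:MC} one has $L_{p,-n/p}(\R^n)=L^p(\R^n)$ and $WL_{q,-n/q}(\R^n)=WL^q(\R^n)$, so the assumption that $T$ is bounded from $L^p(\R^n)$ to $L^q(\R^n)$ (or to $WL^q(\R^n)$) is exactly the boundedness from $L_{p,\lambda}(\R^n)$ to $L_{q,\mu}(\R^n)$ (or to $WL_{q,\mu}(\R^n)$) required as the additional hypothesis of Theorem~\ref{thm:T:Bwu(Mor)}. Moreover, by Example~\ref{exmp:locMt} and the convention fixed at the start of Section~\ref{s:bdd}, with $\tw(r)=w(r)/r$ one has $\dB_w^u(L^p)(\R^n)=LM_{pu,\tw}(\R^n)$, $\dB_w^u(L^q)(\R^n)=LM_{qu,\tw}(\R^n)$ and $\dB_w^u(WL^q)(\R^n)=WLM_{qu,\tw}(\R^n)$, so the conclusion of Theorem~\ref{thm:T:Bwu(Mor)} in its homogeneous form translates directly into a statement about local Morrey-type spaces.

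It then remains to translate the two alternative conditions. Substituting $\lambda=-n/p$ turns $\mu=\lambda+\alpha$ into $-n/q=-n/p+\alpha$ and turns $\sigma+\lambda+\alpha<0$ into $\sigma-n/p+\alpha<0$, matching condition (i); similarly $\sigma+\lambda+n/\tp+\alpha<0$ becomes $\sigma-n/p+n/\tp+\alpha<0$, matching condition (ii). With every hypothesis thus verified, the clause ``the same conclusion holds for $\dB_w^u(L_{p,\lambda})(\R^n)$'' of Theorem~\ref{thm:T:Bwu(Mor)} yields that $T$ extends to a bounded operator from $LM_{pu,\tw}(\R^n)$ to $LM_{qu,\tw}(\R^n)$, respectively to $WLM_{qu,\tw}(\R^n)$, which is the assertion. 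Since the argument is a pure specialization, there is no genuine obstacle; the only points demanding care are the admissibility of the endpoint exponents $\lambda=-n/p$ and $\mu=-n/q$ and the correct bookkeeping of the weak-type identification $\dB_w^u(WL^q)(\R^n)=WLM_{qu,\tw}(\R^n)$.
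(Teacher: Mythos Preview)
Your proposal is correct and follows exactly the paper's approach: the corollary is obtained by specializing Theorem~\ref{thm:T:Bwu(Mor)} to $\lambda=-n/p$ and $\mu=-n/q$ and then identifying $\dB_w^u(L^p)$, $\dB_w^u(L^q)$ and $\dB_w^u(WL^q)$ with $LM_{pu,\tw}$, $LM_{qu,\tw}$ and $WLM_{qu,\tw}$ via Example~\ref{exmp:locMt} and the conventions at the start of Section~\ref{s:bdd}. Your verification that the endpoint exponents are admissible and that the conditions (i) and (ii) translate correctly is precisely the bookkeeping needed.
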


For the boundedness of singular and fractional integral operators
on local Morrey-type spaces, see \cite{BuGu2009,BuGu2010}.

\subsection{Singular integral operators with the cancellation property}\label{ss:SICP}

Let $\kappa\in(0,1]$.
In this section we consider a singular integral operator $T$  with kernel $K(x,y)$
satisfying the following properties;
\begin{gather*} 
  |K(x,y)|\le \frac{C}{|x-y|^n}
     \quad\text{for}\ x\not=y;
                                                  \label{SK1}
 \\
  \begin{split}
  |K(x,y)-K(z,y)|+|K(y,x)-K(y,z)| 
  \le &
  \frac{C}{|x-y|^n}
  \left(\frac{|x-z|}{|x-y|}\right)^{\kappa} \\
  &\text{for}\ |x-y|\ge2|x-z|;
                                                  \label{SK2}
  \end{split}
 \\
  \begin{split}
  \int_{r\le|x-y|<R} K(x,y) \,dy
  =\int_{r\le|x-y|<R} & K(y,x) \,dy = 0  \\
  &\text{for}\ 0<r<R<\infty\ \text{and}\ x\in \R^n,
                                                  \label{SK3}
  \end{split}
\end{gather*}
where $C$ is a positive constant independent of $x,y,z\in\R^n$.
For $\eta>0$, let
\begin{equation*}
  T_{\eta}f(x)=\int_{|x-y|\ge\eta} K(x,y)f(y)\,dy.
\end{equation*}
Then the integral defining $T_{\eta}f(x)$ is convergent 
whenever $f\in L^p_{\comp}(\R^n)$ with $p\in(1,\infty)$.
We assume that, for all $p\in(1,\infty)$, there exists a positive constant $C_p$
such that for all $\eta>0$ and $f\in L^p_{\comp}(\R^n)$,
\begin{equation*} 
  \|T_{\eta}f\|_p \le C_p\|f\|_p, 
\end{equation*}
and that
\begin{equation*}
 \lim_{\eta\to0}T_{\eta}f=Tf
\end{equation*}
exists in $L^p(\R^n)$.
By this assumption, 
the operator $T$ can be extended to a continuous linear operator on $L^p(\R^n)$.
We shall say the operator $T$ satisfying the above conditions 
is a singular integral operator of type $\kappa$.
For example, Riesz transforms $R_j$, $j=1,\cdots,n$,
are singular integral operators of type $1$.

To define $T$ for Campanato spaces, 
we first define the modified version of $T_{\eta}$ as follows:
\begin{equation*}\label{tildeT}
     {\tT}_{\eta}f(x)
       =\int_{|x-y|\ge\eta} 
          \big[K(x,y)-K(0,y)(1-\chi_1(y))\big]f(y) 
                 \,dy.
\end{equation*}
Then, for $f\in\cL_{p,\lambda}(\R^n)$, $p\in(1,\infty)$, $\lambda\in[-n/p,1)$,
we can show that the integral in the definition above 
converges absolutely for all $x$ 
and that
${\tilde T}_{\eta}f$ converges 
in $L^{p}(Q)$ as $\eta\to0$ for each $Q$ 
(see the proof of \cite[Theorem~4.1]{Nakai2010RMC}).
We denote the limit by ${\tilde T}f$. 

\begin{rem}\label{rem:SI}
If $Tf$ is well defined, then ${\tT}f$ is also well defined and 
$Tf-{\tT}f$ is a constant function. Furthermore, for the constant function 
$1$, $T1$ is undefined, while ${\tT}1=0$.
See \cite[Remark~10]{KoMaNaSa2013RMC} for details.
\end{rem} 

The following results are known.

\begin{thm}[\cite{Nakai2010RMC,Peetre1966}]\label{thm:SI:Cam}
Let $T$ be a singular integral operator of type $\kappa\in(0,1]$.
Let $p\in(1,\infty)$.
If $\lambda\in[-n/p,\kappa)$,
then $\tT$ can be extended to a bounded operator 
on $\cL_{p,\lambda}(\R^n)$. 
Moreover, if $\lambda\in[0,\kappa)$, then
$\tT$ can be also extended to a bounded operator 
on $\cL_{1,\lambda}(\R^n)$. 
\end{thm}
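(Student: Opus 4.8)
The plan is to estimate, for each cube $B=Q(z,r)$, the $L^p$-mean oscillation of $\tT f$ over $B$ and show it is $\le C\,r^{\lambda}\|f\|_{\cL_{p,\lambda}}$ uniformly in $B$. Since $\cL_{p,\lambda}$ is taken modulo constants and, by Remark~\ref{rem:SI}, $Tf-\tT f$ is constant while $\tT 1=0$, it suffices to produce for each $B$ a constant $c_B$ with $\bigl(\frac1{|B|}\int_B|\tT f(x)-c_B|^p\,dx\bigr)^{1/p}\le C\,r^{\lambda}\|f\|_{\cL_{p,\lambda}}$. By linearity $\tT f=\tT(f-f_{2B})$, so I put $g=f-f_{2B}$ and split $g=g_1+g_2$ with $g_1=g\chi_{2B}$ and $g_2=g\chi_{(2B)^c}$. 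The compactly supported piece $g_1\in L^p(\R^n)$, so $\tT g_1=Tg_1$ modulo a constant, while the growing piece $g_2$ is handled by the modified operator.

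For the near part I would use the $L^p$-boundedness of $T$ directly: since $\|g_1\|_{L^p(\R^n)}=\bigl(\int_{2B}|f-f_{2B}|^p\bigr)^{1/p}\lesssim r^{n/p+\lambda}\|f\|_{\cL_{p,\lambda}}$, I get $\bigl(\frac1{|B|}\int_B|Tg_1|^p\bigr)^{1/p}\lesssim r^{-n/p}\|Tg_1\|_{L^p(\R^n)}\lesssim r^{-n/p}\|g_1\|_{L^p(\R^n)}\lesssim r^{\lambda}\|f\|_{\cL_{p,\lambda}}$, which is the desired bound for the local term.

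For the far part I would subtract the value at the center and absorb it into $c_B$. The key observation is that for $x,z\in B$ the modification terms $K(0,y)(1-\chi_1(y))$ are independent of the evaluation point and cancel in the difference, and the cut-off $|x-y|\ge\eta$ disappears as $\eta\to0$ since $y\in(2B)^c$; hence $\tT g_2(x)-\tT g_2(z)=\int_{(2B)^c}[K(x,y)-K(z,y)]\,g(y)\,dy$, involving only the plain kernel. The regularity condition of order $\kappa$ gives $|K(x,y)-K(z,y)|\lesssim r^{\kappa}/|z-y|^{n+\kappa}$ on $(2B)^c$. Decomposing $(2B)^c=\bigcup_{j\ge1}(2^{j+1}B\setminus 2^{j}B)$, on the $j$-th annulus $|z-y|\sim 2^j r$ and the telescoping estimate gives $\int_{2^{j+1}B}|f-f_{2B}|\lesssim j\,(2^j r)^{n+\lambda}\|f\|_{\cL_{p,\lambda}}$ (the factor $j$ only when $\lambda\ge0$; for $\lambda<0$ no logarithm appears). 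Thus the $j$-th term is $\lesssim j\,2^{j(\lambda-\kappa)}r^{\lambda}\|f\|_{\cL_{p,\lambda}}$, and $\sum_{j\ge1}j\,2^{j(\lambda-\kappa)}$ converges \emph{exactly} because $\lambda<\kappa$. This bounds $|\tT g_2(x)-\tT g_2(z)|$ uniformly in $x\in B$, controlling the oscillation of the far part. Taking $c_B=(Tg_1)_B+\tT g_2(z)$ and combining the two parts yields boundedness on $\cL_{p,\lambda}(\R^n)$ for $p\in(1,\infty)$ and $\lambda\in[-n/p,\kappa)$. For the endpoint $p=1$, $\lambda\in[0,\kappa)$, I would simply reduce to the case $p>1$: when $\lambda=0$ the John--Nirenberg inequality gives $\cL_{1,0}=\BMO=\cL_{p,0}$ with equivalent norms, and when $\lambda\in(0,\kappa)\subset(0,1]$ Theorem~\ref{thm:Lip} gives $\cL_{1,\lambda}=\Lip_{\lambda}=\cL_{p,\lambda}$ with equivalent norms for every $p\in[1,\infty)$; choosing any $p\in(1,\infty)$ transfers the boundedness to $\cL_{1,\lambda}(\R^n)$.

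The main obstacle I expect is the careful treatment of the modified kernel: verifying that $\tT g_2$ is absolutely convergent for functions $g$ that grow at infinity in the Lipschitz range $\lambda\in(0,\kappa)$ (this is precisely what the subtraction of $K(0,y)(1-\chi_1(y))$ and the cancellation condition secure), and confirming that these modification terms drop out of the oscillation $\tT g_2(x)-\tT g_2(z)$, reducing it to the plain kernel difference. Once that reduction is in place the annular estimate is routine, with $\lambda=\kappa$ excluded exactly because the geometric series $\sum_j j\,2^{j(\lambda-\kappa)}$ diverges there.
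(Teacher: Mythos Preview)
The paper does not prove this theorem; it quotes it as a known result from \cite{Nakai2010RMC,Peetre1966} and uses it as input for the $B_{\sigma}$- and $B_w^u$-theory developed afterward. Your outline is precisely the standard argument one finds in those references: subtract $f_{2B}$, split into a near part handled by the $L^p$-boundedness of $T$ and a far part handled by the $\kappa$-regularity of the kernel via the annular decomposition, with the series $\sum_j j\,2^{j(\lambda-\kappa)}$ converging exactly for $\lambda<\kappa$; the $p=1$ case is then reduced to $p>1$ through John--Nirenberg and Theorem~\ref{thm:Lip}. So there is nothing in the paper to compare against, but your sketch matches the classical proof and is correct.
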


\begin{thm}[\cite{KoMaNaSa2013RMC}]\label{thm:SI:Bs(Cam)}
Let $T$ be a singular integral operator of type $\kappa\in(0,1]$.
Let $\sigma\in[0,\infty)$ and $p\in(1,\infty)$.
If $-n/p+\sigma<\kappa$ and if $\lambda\in[-n/p,\kappa-\sigma)$,
then $\tT$ can be extended to a bounded operator 
on $B_{\sigma}(\cL_{p,\lambda})(\R^n)$
and $\dB_{\sigma}(\cL_{p,\lambda})(\R^n)$.
Moreover, if $\sigma<\kappa$ and if $\lambda\in[0,\kappa-\sigma)$, then
$\tT$ can be also extended to a bounded operator 
on $B_{\sigma}(\cL_{1,\lambda})(\R^n)$
and $\dB_{\sigma}(\cL_{1,\lambda})(\R^n)$. 
\end{thm}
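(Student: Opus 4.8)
The plan is to reduce the claim to a family of local mean-oscillation estimates and then combine the $L^p$-boundedness of the underlying singular integral with the smoothness of its kernel. Since $\|f\|_{\dB_{\sigma}(\cL_{p,\lambda})}=\sup_{r>0}r^{-\sigma}\|f\|_{\cL_{p,\lambda}(Q_r)}$ (with the supremum restricted to $r\ge1$ for $B_{\sigma}$), it suffices to show that for every $r$ and every subcube $Q=Q(z,s)\subset Q_r$ one has $\big(\frac{1}{|Q|}\int_Q|\tT f-(\tT f)_Q|^p\,dx\big)^{1/p}\ls s^{\lambda}r^{\sigma}\|f\|_{\dB_{\sigma}(\cL_{p,\lambda})}$; dividing by $s^{\lambda}$, taking the supremum over $Q\subset Q_r$ and multiplying by $r^{-\sigma}$ then yields the boundedness. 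First I would note that the modification defining $\tT$ subtracts only the $x$-independent quantity $\int K(0,y)(1-\chi_1(y))f(y)\,dy$, so, working as we do modulo constants, the oscillation of $\tT f$ over $Q$ coincides with that of the formal singular integral of any convenient representative of $f$.

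Fixing $Q=Q(z,s)$, I would split $f-f_{Q(z,2s)}=b_1+b_2$ with near part $b_1=(f-f_{Q(z,2s)})\chi_{Q(z,2s)}$ and far part $b_2$. The function $b_1$ lies in $L^p(\R^n)$, with $\|b_1\|_{L^p}\le|Q(z,2s)|^{1/p}(2s)^{\lambda}\|f\|_{\cL_{p,\lambda}(Q(z,2s))}$ straight from the definition of the Campanato norm. Applying the $L^p$-boundedness of the singular integral gives $\big(\frac{1}{|Q|}\int_Q|\tT b_1|^p\,dx\big)^{1/p}\ls|Q|^{-1/p}\|b_1\|_{L^p}\ls s^{\lambda}\|f\|_{\cL_{p,\lambda}(Q(z,2s))}$, and since $Q(z,2s)\subset Q_{|z|+2s}\subset Q_{2r}$ this is controlled by $s^{\lambda}r^{\sigma}\|f\|_{\dB_{\sigma}(\cL_{p,\lambda})}$, as required. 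In the endpoint $p=1$, where only the weak-type $L^1$ estimate holds, I would instead route $b_1$ through the $\cL_{1,\lambda}$-boundedness of $\tT$ supplied by Theorem~\ref{thm:SI:Cam}; this is why that case demands $\lambda\ge0$.

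The decisive estimate is for $b_2$, supported in $\R^n\setminus Q(z,2s)$. For $x,x'\in Q$ the $x$-independent modification cancels, and the kernel smoothness yields $|K(x,y)-K(x',y)|\ls s^{\kappa}|y-z|^{-n-\kappa}$, whence
\[
 |\tT b_2(x)-\tT b_2(x')|
 \ls
 s^{\kappa}\int_{|y-z|\ge2s}\frac{|f(y)-f_{Q(z,2s)}|}{|y-z|^{n+\kappa}}\,dy .
\]
I would break this integral into the dyadic annuli $2^{k}s\le|y-z|<2^{k+1}s$, bounding each by H\"older's inequality, the Campanato estimate on $Q(z,2^{k+1}s)$, and a telescoping control of the constants $f_{Q(z,2s)}-f_{Q(z,2^{k+1}s)}$. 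The $k$-th term turns out to be of size $s^{\lambda+\sigma}2^{k(\lambda+\sigma-\kappa)}\|f\|_{\dB_{\sigma}(\cL_{p,\lambda})}$, so the series converges exactly when $\lambda+\sigma<\kappa$, i.e.\ $\lambda<\kappa-\sigma$; the accompanying hypothesis $-n/p+\sigma<\kappa$ only ensures that the admissible interval $[-n/p,\kappa-\sigma)$ is nonempty. Summing gives an oscillation bound $\ls s^{\lambda+\sigma}\|f\|_{\dB_{\sigma}(\cL_{p,\lambda})}\le s^{\lambda}r^{\sigma}\|f\|_{\dB_{\sigma}(\cL_{p,\lambda})}$ (since $s\le r$), of the required form.

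The main obstacle is the bookkeeping of this dyadic sum when the center $z$ sits far from the origin: for the small values of $k$ with $2^{k}s<|z|$ the cube $Q(z,2^{k+1}s)$ has effective radius $\sim|z|\le r$ rather than $\sim2^{k}s$, so the clean growth $\|f\|_{\cL_{p,\lambda}(Q(z,2^{k+1}s))}\sim(2^{k}s)^{\sigma}\|f\|_{\dB_{\sigma}(\cL_{p,\lambda})}$ breaks down and the sum must be split at the scale $2^{k}s\sim|z|$. The telescoping of the averages is moreover sign-sensitive in $\lambda$, the borderline $\lambda=0$ producing the familiar logarithmic ($\BMO$-type) growth, so the ranges $\lambda<0$, $\lambda=0$ and $\lambda>0$ require slightly different accounting. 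Finally, the inhomogeneous statement for $B_{\sigma}$ follows by restricting every supremum to $r\ge1$ and invoking the restriction property~\eqref{restriction} to absorb the scales below $1$, while the well-definedness of $\tT f$ on these spaces is exactly the absolute convergence of the modified integral noted before the theorem, again guaranteed by $\lambda+\sigma<\kappa$.
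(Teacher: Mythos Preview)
The paper does not prove this theorem: it is quoted from \cite{KoMaNaSa2013RMC} as a known input, and the present paper only \emph{uses} it (together with the interpolation Example~\ref{exmp:MC3}) to deduce Theorem~\ref{thm:SI:Bwu(Cam)}. So there is no ``paper's own proof'' to compare against.

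Your direct argument is, however, exactly the standard route used for results of this type and is the kind of proof one finds in \cite{KoMaNaSa2013RMC,MatsuNakai2011,Nakai2010RMC}: freeze $Q(z,s)\subset Q_r$, split $f-f_{Q(z,2s)}$ into a near part handled by the $L^p$-boundedness of $T$ and a far part handled by the H\"older-$\kappa$ regularity of the kernel, then sum over dyadic shells. The two genuine technical points you flag---splitting the dyadic sum at the scale $2^ks\sim|z|$ because $Q(z,2^{k+1}s)\subset Q_{\max(|z|,2^{k+1}s)}$ and treating the telescoping of the averages separately according to the sign of $\lambda$---are precisely where the work lies, and your identification of the convergence threshold $\lambda+\sigma<\kappa$ is correct. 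One small refinement: for the near part in the case $p=1$ you do not actually need to reroute through Theorem~\ref{thm:SI:Cam}; since $b_1$ is compactly supported one may use $L^{p_0}$-boundedness for any $p_0>1$ together with H\"older on $Q(z,2s)$, which is why the extra restriction $\lambda\ge0$ enters (it guarantees $b_1\in L^{p_0}$ with the right control). With that adjustment the sketch is sound.
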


Using Theorem~\ref{thm:SI:Bs(Cam)} and Example~\ref{exmp:MC3},
we have the following:

\begin{thm}\label{thm:SI:Bwu(Cam)}
Let $T$ be a singular integral operator of type $\kappa\in(0,1]$.
Let $p\in(1,\infty)$,
$u\in(0,\infty]$, $\Theta\in\iTheta$,
and let
\begin{equation}\label{tT w}
 w(r)=r^{-\sigma}\Theta(r^{\tau}),  \quad
 \sigma,\tau\in(0,\infty) \ \text{with} \ \sigma>\tau.
\end{equation}
If $-n/p+\sigma<\kappa$ and if $\lambda\in[-n/p,\kappa-\sigma)$,
then $\tT$ can be extended to a bounded operator 
on $B_w^u(\cL_{p,\lambda})(\R^n)$
and $\dB_w^u(\cL_{p,\lambda})(\R^n)$.
Moreover, if $\sigma<\kappa$ and if $\lambda\in[0,\kappa-\sigma)$, then
$\tT$ can be also extended to a bounded operator 
on $B_w^u(\cL_{1,\lambda})(\R^n)$
and $\dB_w^u(\cL_{1,\lambda})(\R^n)$. 
\end{thm}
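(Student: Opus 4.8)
The plan is to obtain the result from the endpoint boundedness of Theorem~\ref{thm:SI:Bs(Cam)} combined with the interpolation identity of Corollary~\ref{cor:IP1} (its concrete Campanato instance being Example~\ref{exmp:MC3}), exactly as in the proofs of Theorems~\ref{thm:M:Bwu(Mor)} and~\ref{thm:T:Bwu(Mor)}. First note that $\cL_{p,\lambda}$ is admissible as the space $E$: by Proposition~\ref{prop:decomp} the family $\{\cL_{p,\lambda}(Q_r)\}$ has the restriction and decomposition properties for $\lambda\in[-n/p,1]$, which covers the range $\lambda\in[-n/p,\kappa-\sigma)\subset[-n/p,1)$ at hand. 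I would then match $w(r)=r^{-\sigma}\Theta(r^{\tau})$ against the shape $r^{-\sigma_0}\Theta(r^{\sigma_0-\sigma_1})$ in Corollary~\ref{cor:IP1}, which forces
\[
 \sigma_0=\sigma,\qquad \sigma_1=\sigma-\tau;
\]
the hypothesis $\sigma>\tau>0$ gives $\sigma_0>\sigma_1>0$. With $E=\cL_{p,\lambda}$, Corollary~\ref{cor:IP1} then yields, for every $u\in(0,\infty]$,
\[
 (B_{\sigma_0}(\cL_{p,\lambda}),B_{\sigma_1}(\cL_{p,\lambda}),\Theta)_{u,\,[1,\infty)}
 =
 B_w^u(\cL_{p,\lambda}),
\]
together with the homogeneous analogue for $\dB_w^u(\cL_{p,\lambda})$ over $(0,\infty)$.

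Next I would check the endpoint boundedness at both exponents. At $\sigma_0=\sigma$ the conditions $-n/p+\sigma<\kappa$ and $\lambda\in[-n/p,\kappa-\sigma)$ are precisely the standing hypotheses, so Theorem~\ref{thm:SI:Bs(Cam)} makes $\tT$ bounded on $B_{\sigma_0}(\cL_{p,\lambda})$ and on $\dB_{\sigma_0}(\cL_{p,\lambda})$. At $\sigma_1=\sigma-\tau<\sigma$ the requirements only weaken: $-n/p+\sigma_1<-n/p+\sigma<\kappa$, while $\kappa-\sigma_1=\kappa-\sigma+\tau>\kappa-\sigma$ gives $\lambda\in[-n/p,\kappa-\sigma)\subset[-n/p,\kappa-\sigma_1)$. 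Hence $\tT$ is also bounded on $A_1:=B_{\sigma_1}(\cL_{p,\lambda})$, and, writing $A_0:=B_{\sigma_0}(\cL_{p,\lambda})$, it is bounded from $A_i$ to $A_i$ for $i=0,1$.

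Now I would run the interpolation argument of Section~\ref{s:bdd}. Since $\tT$ is linear, any splitting $f=f_0+f_1$ with $f_i\in A_i$ gives $\tT f=\tT f_0+\tT f_1$ with $\tT f_i\in A_i$, so
\[
 K(r,\tT f;A_0,A_1)
 \le
 \|\tT f_0\|_{A_0}+r\|\tT f_1\|_{A_1}
 \lesssim
 \|f_0\|_{A_0}+r\|f_1\|_{A_1};
\]
taking the infimum over decompositions yields $K(r,\tT f;A_0,A_1)\lesssim K(r,f;A_0,A_1)$. Multiplying by $\Theta(r^{-1})$, taking the $L^u([1,\infty),dr/r)$ quasi-norm, and using the identity above at both ends (the target and source interpolation spaces coincide, both equalling $B_w^u(\cL_{p,\lambda})$) gives
\[
 \|\tT f\|_{B_w^u(\cL_{p,\lambda})}
 \lesssim
 \|f\|_{B_w^u(\cL_{p,\lambda})},
\]
and the $\dB_w^u$ case is identical with $(0,\infty)$ in place of $[1,\infty)$. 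The $p=1$ statement follows verbatim from the \emph{moreover} clause of Theorem~\ref{thm:SI:Bs(Cam)}: under $\sigma<\kappa$ and $\lambda\in[0,\kappa-\sigma)$ the endpoint boundedness holds at $\sigma_0$, and at $\sigma_1<\sigma$ the conditions $\sigma_1<\kappa$ and $\lambda\in[0,\kappa-\sigma)\subset[0,\kappa-\sigma_1)$ hold automatically.

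The only point beyond this bookkeeping is the well-definedness of $\tT f$ for $f$ in the enlarged space $B_w^u(\cL_{p,\lambda})$, whose members lie only locally in $\cL_{p,\lambda}$. One must verify that $\tT f=\tT f_0+\tT f_1$ is independent of the splitting $f=f_0+f_1\in A_0+A_1$ and agrees with $\lim_{\eta\to0}\tT_\eta$ on a dense subclass. This is the analogue, for $\tT$ on the Campanato scale, of the constructions in Remarks~\ref{rem:def Bs} and~\ref{rem:def Bwu}: cutting $f=f\chi_R+f(1-\chi_R)$ so that each piece lands in a $B_{\sigma'}(\cL_{p,\lambda})$ on which $\tT$ has already been built by Theorem~\ref{thm:SI:Bs(Cam)}, and letting $R\to\infty$, reduces the matter to the known $B_\sigma$ case. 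I expect this to be the only step requiring genuine verification rather than mechanical manipulation.
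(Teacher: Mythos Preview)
Your approach is correct and is exactly the paper's own proof, which is the single line ``Using Theorem~\ref{thm:SI:Bs(Cam)} and Example~\ref{exmp:MC3}''; your identification $\sigma_0=\sigma$, $\sigma_1=\sigma-\tau$ and the endpoint checks are precisely what is required. One small caution on your final paragraph: the sharp cutoff $f=f\chi_R+f(1-\chi_R)$ is not appropriate in the Campanato setting (the jump destroys the oscillation control), so if you spell out the well-definedness you should use the smooth decomposition of Proposition~\ref{prop:decomp} instead---though in the non-homogeneous case this is largely moot, since $\sigma_0>\sigma_1$ gives $B_{\sigma_1}(\cL_{p,\lambda})\subset B_{\sigma_0}(\cL_{p,\lambda})$, so $B_w^u(\cL_{p,\lambda})\subset A_0+A_1=B_\sigma(\cL_{p,\lambda})$ and $\tT$ is already defined there by Theorem~\ref{thm:SI:Bs(Cam)}.
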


Let $\lambda=0$ in Theorem~\ref{thm:SI:Bwu(Cam)} we have the following.

\begin{cor}\label{cor:SI:Bwu(BMO)}
Let $T$ be a singular integral operator of type $\kappa\in(0,1]$.
Let $u\in(0,\infty]$, $\Theta\in\iTheta$,
and define $w$ by \eqref{tT w}.
If $\sigma<\kappa$,
then $\tT$ can be extended to a bounded operator 
on $B_w^u(\BMO)(\R^n)$ and $\dB_w^u(\BMO)(\R^n)$. 
\end{cor}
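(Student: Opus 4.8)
The plan is to obtain this as the special case $\lambda = 0$ of Theorem~\ref{thm:SI:Bwu(Cam)}. The only conceptual ingredient beyond that theorem is the John--Nirenberg identification recorded in Example~\ref{exmp:BsMC}: for every $p \in [1,\infty)$ one has $\cL_{p,0}(\R^n) = \BMO(\R^n)$ with equivalent norms, and the same equivalence holds on each cube $Q_r$ with constants independent of $r$. Consequently the families $\{\cL_{p,0}(Q_r)\}$ and $\{\BMO(Q_r)\}$ coincide, so that $B_w^u(\cL_{p,0})(\R^n) = B_w^u(\BMO)(\R^n)$ and $\dB_w^u(\cL_{p,0})(\R^n) = \dB_w^u(\BMO)(\R^n)$ with equivalent quasi-norms. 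Thus it suffices to read off the boundedness of $\tT$ on $B_w^u(\cL_{p,0})(\R^n)$ and $\dB_w^u(\cL_{p,0})(\R^n)$ from Theorem~\ref{thm:SI:Bwu(Cam)}.

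First I would fix any exponent $p \in (1,\infty)$ and check that the hypotheses of Theorem~\ref{thm:SI:Bwu(Cam)} are met when $\lambda = 0$. The standing assumption $\sigma < \kappa$ gives $\kappa - \sigma > 0$, whence $0 \in [-n/p, \kappa - \sigma)$; this is exactly the admissibility condition $\lambda \in [-n/p, \kappa-\sigma)$ evaluated at $\lambda = 0$. Moreover, since $-n/p < 0$, we have $-n/p + \sigma < \sigma < \kappa$, so the condition $-n/p + \sigma < \kappa$ holds automatically. The remaining data, namely $w(r) = r^{-\sigma}\Theta(r^{\tau})$ with $\Theta \in \iTheta$ and $\sigma > \tau > 0$, together with $u \in (0,\infty]$, are inherited verbatim. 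Hence Theorem~\ref{thm:SI:Bwu(Cam)} applies and shows that $\tT$ extends to a bounded operator on $B_w^u(\cL_{p,0})(\R^n)$ and on $\dB_w^u(\cL_{p,0})(\R^n)$. One could equally invoke the ``moreover'' clause with $p = 1$, where $\cL_{1,0}(\R^n) = \BMO(\R^n)$ holds by the very definition in Subsection~\ref{ss:MCL}; the constraint $\lambda \in [0,\kappa-\sigma)$ again collapses to $\sigma < \kappa$.

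Combining the two displays, the identification $\cL_{p,0} = \BMO$ transports this boundedness to $B_w^u(\BMO)(\R^n)$ and $\dB_w^u(\BMO)(\R^n)$, which is the asserted conclusion. I do not expect any genuine obstacle here: the argument is a clean specialization. The only points deserving a word of care are that the John--Nirenberg identification is independent of the auxiliary exponent $p$ (so that the resulting space is unambiguously $\BMO$ and does not depend on the chosen $p$), and that the two parameter constraints of Theorem~\ref{thm:SI:Bwu(Cam)} degenerate to the single hypothesis $\sigma < \kappa$ once $\lambda$ is set to $0$.
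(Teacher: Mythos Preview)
Your proposal is correct and follows exactly the paper's approach: the paper simply states that setting $\lambda=0$ in Theorem~\ref{thm:SI:Bwu(Cam)} yields the corollary, and you have spelled out the routine verification that the hypotheses reduce to $\sigma<\kappa$ together with the identification $\cL_{p,0}=\BMO$.
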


By Theorem~\ref{thm:Lip} we have the following:

\begin{cor}\label{cor:SI:Bwu(Lip)}
Let $T$ be a singular integral operator of type $\kappa\in(0,1]$.
Let 
$u\in(0,\infty]$, $\Theta\in\iTheta$,
and define $w$ by \eqref{tT w}.
If $\sigma<\sigma+\alpha<\kappa$,
then $\tT$ can be extended to a bounded operator 
on $B_w^u(\Lip_{\alpha})(\R^n)$ and $\dB_w^u(\Lip_{\alpha})(\R^n)$. 
\end{cor}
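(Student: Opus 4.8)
The plan is to deduce this Lipschitz statement directly from the Campanato statement of Theorem~\ref{thm:SI:Bwu(Cam)}, using the identification of Lipschitz and Campanato spaces furnished by Theorem~\ref{thm:Lip}. Fix any $p\in(1,\infty)$; this choice keeps $\tT$ within the range in which it was defined (namely $p\in(1,\infty)$, $\lambda\in[-n/p,1)$). Applying Theorem~\ref{thm:Lip} with $\lambda=\alpha\in(0,1]$, we have, for every $r>0$, the identity $\cL_{p,\alpha}(Q_r)=\Lip_{\alpha}(Q_r)$ modulo constants, together with
\[
 C^{-1}\|f\|_{\cL_{p,\alpha}(Q_r)}\le\|f\|_{\Lip_{\alpha}(Q_r)}\le C\|f\|_{\cL_{p,\alpha}(Q_r)},
\]
where the key point is that $C$ depends only on $n$ and $\alpha$ and is \emph{independent} of $r$.

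First I would exploit this uniformity to identify the two scales of function spaces. Since $\|f\|_{\Lip_{\alpha}(Q_r)}$ and $\|f\|_{\cL_{p,\alpha}(Q_r)}$ are comparable with a constant independent of $r$, multiplying by $w(r)$ and taking the $L^u(dr/r)$ quasi-norm over $[1,\infty)$, respectively over $(0,\infty)$, preserves the comparison. Hence
\[
 B_w^u(\Lip_{\alpha})(\R^n)=B_w^u(\cL_{p,\alpha})(\R^n),\qquad
 \dB_w^u(\Lip_{\alpha})(\R^n)=\dB_w^u(\cL_{p,\alpha})(\R^n),
\]
with equivalent quasi-norms. Next I would check the hypotheses of Theorem~\ref{thm:SI:Bwu(Cam)} for $E=\cL_{p,\alpha}$, i.e.\ with $\lambda=\alpha$ and $w$ as in \eqref{tT w} (which is literally the weight assumed there). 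The standing assumption $\sigma<\sigma+\alpha<\kappa$ gives $\alpha<\kappa-\sigma$, and since $\alpha>0>-n/p$ we get $\lambda=\alpha\in[-n/p,\kappa-\sigma)$; moreover $-n/p+\sigma<\sigma<\sigma+\alpha<\kappa$. Thus both requirements $-n/p+\sigma<\kappa$ and $\lambda\in[-n/p,\kappa-\sigma)$ hold, and Theorem~\ref{thm:SI:Bwu(Cam)} yields that $\tT$ is bounded on $B_w^u(\cL_{p,\alpha})(\R^n)$ and on $\dB_w^u(\cL_{p,\alpha})(\R^n)$.

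Combining the two steps, I would transport this boundedness across the norm equivalence of the first step to conclude that $\tT$ is bounded on $B_w^u(\Lip_{\alpha})(\R^n)$ and $\dB_w^u(\Lip_{\alpha})(\R^n)$, as claimed. There is essentially no analytic obstacle: all the substance resides in Theorems~\ref{thm:Lip} and \ref{thm:SI:Bwu(Cam)}. The only two points meriting a word of care are that the modified operator $\tT$ is the \emph{same} operator on $\Lip_{\alpha}(\R^n)$ as on $\cL_{p,\alpha}(\R^n)$ — which is immediate, since these coincide as spaces modulo constants by Theorem~\ref{thm:Lip}, so no re-definition of $\tT$ is needed — and that the comparison constants stay independent of $r$, which is precisely the form in which Theorem~\ref{thm:Lip} is stated and is what makes the passage to the $B_w^u$ and $\dB_w^u$ norms legitimate.
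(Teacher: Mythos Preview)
Your proof is correct and follows exactly the paper's approach: the paper simply records ``By Theorem~\ref{thm:Lip} we have the following'' before stating the corollary, relying on the identification $\cL_{p,\alpha}=\Lip_{\alpha}$ with constants independent of $r$ to transfer the boundedness from Theorem~\ref{thm:SI:Bwu(Cam)}. Your careful verification that $\lambda=\alpha$ satisfies the hypotheses $-n/p+\sigma<\kappa$ and $\alpha\in[-n/p,\kappa-\sigma)$ is exactly what is implicitly used.
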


\subsection{Modified fractional integral operators}\label{ss:tFI}

To define fractional integral operators on Campanato spaces
we define the modified version of $I_{\alpha}$, $\alpha\in(0,n)$, 
as follows;
\begin{equation*}
     \tI_{\alpha}f(x)
       =\int_{\R^n} f(y) 
                 \left(\frac {1}{|x-y|^{n-\alpha}} 
                       -\frac {1-\chi_{1}(y)}{|y|^{n-\alpha}}
                       \right)
                 \,dy.
\end{equation*}
If $I_{\alpha}f$ is well defined, 
then $\tI_{\alpha}f$ is also well defined and 
$I_{\alpha}f-\tI_{\alpha}f$ is a constant function. 
For the constant function $1$, $I_{\alpha}1\equiv\infty$, while
$\tI_{\alpha}1$ is well defined and also a constant function,
see \cite[Remark~2.1]{MatsuNakai2011} for example.

The following is known:
\begin{thm}[\cite{MatsuNakai2011}]\label{thm:FI Bs(Cam)}
Let $\alpha\in(0,1)$, $p,q\in[1,\infty)$, 
$\lambda\in[-n/p,1)$, $\mu\in[-n/q,1)$, $\sigma\in[0,\infty)$,
$\lambda+\alpha=\mu$
and $\sigma+\lambda+\alpha<1$.
Assume that $p$ and $q$ satisfy one of the following conditions:
\begin{enumerate}
\item 
$p=1$ and $1\le q<n/(n-\alpha)$;
\item
$1<p<n/\alpha$ and $1\le q\le pn/(n-p\alpha)$;
\item
$n/\alpha\le p<\infty$ and $1\le q<\infty$ {\rm (}in this case, $0\le\mu<1${\rm )}.
\end{enumerate}
Then $\tI_{\alpha}$ is bounded 
from $B_{\sigma}(\cL_{p,\lambda})(\R^n)$ 
to $B_{\sigma}(\cL_{q,\mu})(\R^n)$
and 
from $\dB_{\sigma}(\cL_{p,\lambda})(\R^n)$ 
to $\dB_{\sigma}(\cL_{q,\mu})(\R^n)$. 
\end{thm}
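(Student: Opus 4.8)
The plan is to reduce everything to the \emph{unweighted} Campanato estimate and an annular tail bound. Concretely, I would establish, uniformly in the scale parameter, the local inequality
\begin{equation*}
 \|\tI_{\alpha}f\|_{\cL_{q,\mu}(Q_r)}\ls r^{\sigma}\|f\|_{B_{\sigma}(\cL_{p,\lambda})(\R^n)},
\end{equation*}
after which taking $\sup_{r\ge1}r^{-\sigma}(\cdots)$ gives the non-homogeneous case and $\sup_{r>0}r^{-\sigma}(\cdots)$ the homogeneous one. The input is the case $\sigma=0$, i.e. the classical boundedness $\tI_{\alpha}\colon\cL_{p,\lambda}(\R^n)\to\cL_{q,\mu}(\R^n)$ (with $\mu=\lambda+\alpha$ and conditions (i)--(iii)); this is genuinely more elementary than the $B_\sigma$ statement (it carries no scale structure) and the modification built into $\tI_{\alpha}$ is exactly what makes it hold, so I take it as known. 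Note also that $\tI_{\alpha}$ sends constants to constants, which drop out of the Campanato seminorm; I use this both to normalize and to justify working modulo constants throughout.

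Fix $r$ and split $f=f_0^r+f_1^r$ as in Proposition~\ref{prop:decomp}, with $f_0^r=(f-f_{Q_{2r}})h_r$ and $f_1^r=f-f_0^r$, where $h_r$ is the cutoff from \eqref{h}. Modulo a constant, $\tI_{\alpha}f=\tI_{\alpha}f_0^r+\tI_{\alpha}f_1^r$. For the \emph{near} part, $f_0^r$ is supported in $Q_{2r}$ and Proposition~\ref{prop:decomp} gives $\|f_0^r\|_{\cL_{p,\lambda}(\R^n)}\ls\|f\|_{\cL_{p,\lambda}(Q_{3r})}\ls r^{\sigma}\|f\|_{B_{\sigma}(\cL_{p,\lambda})(\R^n)}$; applying the $\sigma=0$ boundedness yields $\|\tI_{\alpha}f_0^r\|_{\cL_{q,\mu}(Q_r)}\le\|\tI_{\alpha}f_0^r\|_{\cL_{q,\mu}(\R^n)}\ls r^{\sigma}\|f\|_{B_{\sigma}(\cL_{p,\lambda})(\R^n)}$, which is the desired bound for this piece.

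The work is in the \emph{far} part. Since $h_r\equiv1$ on $Q_r$, for $x,x'\in Q(z,s)\subset Q_r$ the difference $\tI_{\alpha}f_1^r(x)-\tI_{\alpha}f_1^r(x')$ reduces to an integral of $f_1^r(y)\,[\,|x-y|^{\alpha-n}-|x'-y|^{\alpha-n}\,]$ over $\{|y|\gtrsim r\}$, the subtraction term in the definition of $\tI_{\alpha}$ cancelling. I would decompose this region into dyadic annuli $A_k=Q_{2^{k+1}r}\setminus Q_{2^kr}$, and on each annulus replace $f$ by $f-f_{A_k}$: the oscillation part is controlled via the mean-value bound $|\,|x-y|^{\alpha-n}-|x'-y|^{\alpha-n}\,|\ls s\,|z-y|^{\alpha-n-1}$ against $|A_k|^{-1}\int_{A_k}|f-f_{A_k}|\ls(2^kr)^{\lambda}\|f\|_{\cL_{p,\lambda}(Q_{2^{k+1}r})}\ls(2^kr)^{\lambda+\sigma}\|f\|_{B_{\sigma}(\cL_{p,\lambda})(\R^n)}$, while the leftover constants $f_{A_k}$ are handled by telescoping $f_{Q_{2^{k}r}}-f_{Q_r}$ (after normalizing $f_{Q_r}=0$, permissible since constants are killed). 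Summing the resulting geometric series in $k$ bounds the oscillation of $\tI_{\alpha}f_1^r$ on $Q(z,s)$ by $\ls s\,r^{\sigma+\lambda+\alpha-1}\|f\|_{B_{\sigma}(\cL_{p,\lambda})(\R^n)}$; dividing by $s^{\mu}$, using $s\le r$ and $\mu=\lambda+\alpha$ then collapses the $s$- and $r$-powers exactly to $r^{\sigma}$. The homogeneous case is identical with the supremum over all $r>0$. The main obstacle is precisely this tail estimate: one must exploit the \emph{oscillation} of $f$ on each annulus rather than its pointwise size (so that the $C^1$-smoothness of the kernel can be used), carefully control the transition zone $r<|y|<2r$ coming from $h_r$ and the telescoped means, and verify absolute convergence of every integral. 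The convergence of the dyadic series is the entire arithmetic content of the hypothesis $\sigma+\lambda+\alpha<1$, which is where I expect the real care to lie.
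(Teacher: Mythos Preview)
The paper does not prove this theorem: it is quoted from \cite{MatsuNakai2011} and used as a black box (together with the interpolation result, Example~\ref{exmp:MC3}) to obtain Theorem~\ref{thm:FI Bwu(Cam)}. So there is no proof in the paper to compare against.

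Your direct argument is nonetheless the natural one and matches the strategy of \cite{MatsuNakai2011} (whose Lemma~3.5 is precisely the cutoff estimate behind Proposition~\ref{prop:decomp} here). The near/far split via $f_0^r,f_1^r$, the use of the $\sigma=0$ Campanato boundedness on the compactly supported piece, and the dyadic annular estimate exploiting the $C^1$-regularity of $|x-y|^{\alpha-n}$ for the tail are all correct, and you have correctly identified $\sigma+\lambda+\alpha<1$ as exactly the condition making the geometric series $\sum_k (2^kr)^{\sigma+\lambda+\alpha-1}$ converge. Two points deserve more care than you indicate. First, the telescoped means $|f_{Q_{2^{k+1}r}}-f_{Q_r}|\ls\sum_{j\le k}(2^jr)^{\lambda+\sigma}\|f\|_{B_\sigma(\cL_{p,\lambda})}$ behave differently according to the sign of $\lambda+\sigma$ (and pick up a factor $k$ in the borderline case), so you should check each case separately; all are absorbed by $\alpha<1$ or $\sigma+\lambda+\alpha<1$, but this is not automatic. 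Second, before taking differences you need $\tI_\alpha f_1^r(x)$ itself to be finite; this is where the normalization $f_{Q_r}=0$ and the absolute convergence you flag are genuinely needed, not merely convenient.
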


Using Theorem~\ref{thm:FI Bs(Cam)} and Example~\ref{exmp:MC3},
we have the following:

\begin{thm}\label{thm:FI Bwu(Cam)}
Let $\alpha\in(0,1)$, $p,q\in[1,\infty)$, 
$\lambda\in[-n/p,1)$, $\mu\in[-n/q,1)$ 
and $\lambda+\alpha=\mu$.
Let
$u\in(0,\infty]$, $\Theta\in\iTheta$,
and let
\begin{equation}\label{tI w}
 w(r)=r^{-\sigma}\Theta(r^{\tau}),  \quad
 \sigma,\tau\in(0,\infty) \ \text{with} \ \sigma>\tau.
\end{equation}
Assume that $\sigma+\lambda+\alpha<1$.
Assume also that $p$ and $q$ satisfy one of the conditions
{\rm(i)}, {\rm(ii)} and {\rm(iii)} in Theorem~\ref{thm:FI Bs(Cam)}.
Then $\tI_{\alpha}$ is bounded 
from $B_w^u(\cL_{p,\lambda})(\R^n)$ 
to $B_w^u(\cL_{q,\mu})(\R^n)$
and 
from $\dB_w^u(\cL_{p,\lambda})(\R^n)$ 
to $\dB_w^u(\cL_{q,\mu})(\R^n)$. 
\end{thm}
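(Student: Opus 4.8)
The plan is to obtain Theorem~\ref{thm:FI Bwu(Cam)} as a formal consequence of the endpoint boundedness in Theorem~\ref{thm:FI Bs(Cam)} and the interpolation identity of Corollary~\ref{cor:IP1} (in the form recorded in Example~\ref{exmp:MC3}), following verbatim the scheme already used for Theorems~\ref{thm:M:Bwu(Mor)} and \ref{thm:T:Bwu(Mor)}. The operator $\tI_{\alpha}$ is linear, so it automatically satisfies \eqref{subdiff} with constant $1$; consequently the interpolation-of-operators principle stated at the start of Section~\ref{s:bdd} applies directly, and because the map is linear we never need the lattice property \eqref{lattice} of the target spaces (which Campanato spaces do not possess).

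First I would record the two endpoints. Set $\sigma_0=\sigma$ and $\sigma_1=\sigma-\tau$; the hypothesis $\sigma>\tau>0$ forces $\sigma_0,\sigma_1\in(0,\infty)$ with $\sigma_0>\sigma_1$, and $\sigma_0-\sigma_1=\tau$ gives precisely $w(r)=r^{-\sigma_0}\Theta(r^{\sigma_0-\sigma_1})$. The only $\sigma$-dependent hypothesis of Theorem~\ref{thm:FI Bs(Cam)} is $\sigma+\lambda+\alpha<1$, which is assumed for $\sigma_0=\sigma$ and, since $\sigma_1<\sigma_0$, is then automatic for $\sigma_1$; the conditions on $p,q,\lambda,\mu,\alpha$ and {\rm(i)}--{\rm(iii)} do not involve $\sigma$ and hold for both indices. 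Hence $\tI_{\alpha}$ is bounded from $A_i:=\dB_{\sigma_i}(\cL_{p,\lambda})(\R^n)$ to $B_i:=\dB_{\sigma_i}(\cL_{q,\mu})(\R^n)$ for $i=0,1$, and likewise for the non-homogeneous couples.

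With both endpoints available, the interpolation-of-operators principle yields $K(r,\tI_{\alpha}f;B_0,B_1)\le C\,K(r,f;A_0,A_1)$ for every $f\in A_0+A_1$, so that $\tI_{\alpha}$ maps $(A_0,A_1,\Theta)_u$ boundedly into $(B_0,B_1,\Theta)_u$ for each $u\in(0,\infty]$. Applying Corollary~\ref{cor:IP1} (as in Example~\ref{exmp:MC3} when $u<\infty$, and directly when $u=\infty$) to the domain couple with $E=\cL_{p,\lambda}$ and to the target couple with $E=\cL_{q,\mu}$ identifies these interpolation spaces as $\dB_w^u(\cL_{p,\lambda})(\R^n)$ and $\dB_w^u(\cL_{q,\mu})(\R^n)$ respectively, which is the asserted homogeneous boundedness. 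The non-homogeneous statement follows identically upon replacing $(\,\cdot\,)_u$ by $(\,\cdot\,)_{u,[1,\infty)}$ and $\dB$ by $B$ throughout.

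The one genuinely non-formal point, and the step I expect to demand the most care, is the well-definedness of $\tI_{\alpha}f$ on the larger space $B_w^u(\cL_{p,\lambda})(\R^n)$, since here the modified truncation in the definition of $\tI_{\alpha}$ is what guarantees convergence. I would handle it exactly as in Remark~\ref{rem:def Bwu}: the interpolation space is contained in the sum $A_0+A_1$, on which $\tI_{\alpha}$ is defined by linearity, each summand lying in some $\dB_{\sigma_i}(\cL_{p,\lambda})(\R^n)$ on which $\tI_{\alpha}$ is already defined and bounded by Theorem~\ref{thm:FI Bs(Cam)}. Because both partial definitions descend from the single linear operator $\tI_{\alpha}$ constructed on Campanato functions in \cite{MatsuNakai2011}, they agree on $A_0\cap A_1$, so the extension to $A_0+A_1$, and in particular to $\dB_w^u(\cL_{p,\lambda})(\R^n)$, is unambiguous. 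This closes the argument.
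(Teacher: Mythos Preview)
Your proposal is correct and follows exactly the route the paper indicates: invoke the endpoint boundedness of Theorem~\ref{thm:FI Bs(Cam)} at $\sigma_0=\sigma$ and $\sigma_1=\sigma-\tau$, then apply the interpolation identity of Corollary~\ref{cor:IP1}/Example~\ref{exmp:MC3} with $E=\cL_{p,\lambda}$ and $E=\cL_{q,\mu}$ (Proposition~\ref{prop:decomp} supplying the decomposition property), using linearity of $\tI_{\alpha}$ so that the $K$-functional comparison of Section~\ref{s:bdd} goes through without any lattice condition on the target. Your remarks on well-definedness via $A_0+A_1$ simply make explicit what the paper leaves implicit.
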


If $\lambda=0$, then $\cL_{p,\lambda}=\BMO$.
If $0<\lambda<1$, then $\cL_{p,\lambda}=\Lip_{\lambda}$.
Therefore, we have the following:

\begin{cor} \label{cor:Lip}
Let $\alpha,\beta,\gamma\in(0,1)$
and $\alpha+\beta=\gamma$.
Let
$u\in(0,\infty]$, $\Theta\in\iTheta$,
and define $w$ by \eqref{tI w} with $\alpha+\beta+\sigma<1$.
Then $\tI_{\alpha}$ is bounded 
from $B_w^u(\BMO)(\R^n)$ to $B_w^u(\Lip_{\alpha})(\R^n)$, 
from $B_w^u(\Lip_{\beta})(\R^n)$ to $B_w^u(\Lip_{\gamma})(\R^n)$,
from $\dB_w^u(\BMO)(\R^n)$ to $\dB_w^u(\Lip_{\alpha})(\R^n)$ 
and
from $\dB_w^u(\Lip_{\beta})(\R^n)$ to $\dB_w^u(\Lip_{\gamma})(\R^n)$. 
\end{cor}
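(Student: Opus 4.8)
The plan is to deduce every assertion directly from Theorem~\ref{thm:FI Bwu(Cam)}, exploiting the coincidences $\cL_{p,0}(\R^n)=\BMO(\R^n)$ and $\cL_{q,\nu}(\R^n)=\Lip_{\nu}(\R^n)$ for $\nu\in(0,1)$, which hold for every $p,q\in[1,\infty)$ (Example~\ref{exmp:BsMC}, via Theorem~\ref{thm:Lip}). The strategy is to choose $\lambda,\mu$ so that the source and target Campanato spaces become the stated $\BMO$ or $\Lip$ spaces, then to pick admissible exponents $p,q$ so that one of the conditions (i)--(iii) of Theorem~\ref{thm:FI Bs(Cam)} is met. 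Since the homogeneous ($\dB$) statements follow verbatim under the same substitutions, I would carry out only the inhomogeneous cases.

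For $\tI_{\alpha}\colon B_w^u(\BMO)(\R^n)\to B_w^u(\Lip_{\alpha})(\R^n)$ I would set $\lambda=0$, so that $\cL_{p,\lambda}=\BMO$, and take $\mu=\lambda+\alpha=\alpha\in(0,1)$, so that $\cL_{q,\mu}=\Lip_{\alpha}$. For $\tI_{\alpha}\colon B_w^u(\Lip_{\beta})(\R^n)\to B_w^u(\Lip_{\gamma})(\R^n)$ I would set $\lambda=\beta\in(0,1)$ and $\mu=\lambda+\alpha=\beta+\alpha=\gamma\in(0,1)$, giving $\cL_{p,\lambda}=\Lip_{\beta}$ and $\cL_{q,\mu}=\Lip_{\gamma}$. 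In both cases the relation $\mu=\lambda+\alpha$ demanded by Theorem~\ref{thm:FI Bwu(Cam)} holds by construction.

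It then remains to check the two hypotheses of Theorem~\ref{thm:FI Bwu(Cam)}. The constraint $\sigma+\lambda+\alpha<1$ specializes to $\sigma+\alpha<1$ in the first case and to $\sigma+\beta+\alpha<1$ in the second; both follow from the standing assumption $\alpha+\beta+\sigma<1$ (using $\beta>0$ in the first case). To satisfy one of (i)--(iii), I would fix $q=1$ and choose any $p\ge n/\alpha$; since $\mu>0$ in each regime, condition (iii) (namely $n/\alpha\le p<\infty$, $1\le q<\infty$, $0\le\mu<1$) is met, and the freedom to pick such a $p$ is precisely what the identifications $\cL_{p,0}=\BMO$ and $\cL_{p,\beta}=\Lip_{\beta}$ (valid for all $p\in[1,\infty)$) permit. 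Feeding these data into Theorem~\ref{thm:FI Bwu(Cam)} yields the stated boundedness, and the same substitutions applied to its $\dB$-conclusion give the homogeneous assertions.

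The argument presents no genuine obstacle: the only point requiring care is the bookkeeping of the parameter inequalities, in particular verifying that the single hypothesis $\alpha+\beta+\sigma<1$ delivers the stronger-looking constraint $\sigma+\lambda+\alpha<1$ in each of the two regimes, and that an admissible pair $(p,q)$ realizing one of (i)--(iii) exists for the chosen $(\lambda,\mu)$.
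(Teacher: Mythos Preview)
Your proposal is correct and follows exactly the paper's approach: the paper simply notes that $\cL_{p,0}=\BMO$ and $\cL_{p,\lambda}=\Lip_{\lambda}$ for $\lambda\in(0,1)$, then invokes Theorem~\ref{thm:FI Bwu(Cam)}. You have merely made explicit the choice of $(p,q)$ realizing condition~(iii) and the parameter check $\sigma+\lambda+\alpha<1$, which the paper leaves implicit.
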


\subsection{Vector-valued boundedness}\label{ss:Vector}

In this section we state the vector-valued inequalities for 
$B_w^u(L_{p,\lambda})(\R^n)$ and $\dB_w^u(L_{p,\lambda})(\R^n)$.

\begin{defn}\label{defn:E(Br)-vector}
Let $\dmn=\R^n$ or $Q_r$ with $r>0$.
Let $p\in[1,\infty)$, $\lambda\in\R$ and $v\in(0,\infty]$.
For
\begin{equation*}
 E=L^p,\ WL^p,\ L_{p,\lambda} \ \text{or}\ WL_{p,\lambda},
\end{equation*}
let $E(\ell^v)(\dmn)$ 
be the sets of all sequences
of functions $\sqc{f}$ such that the following 
functional is finite:
\begin{equation*}
 \|\sqc{f}\|_{E(\ell^v)(\dmn)}
  =\left\|\left(\sum_{j=1}^\infty |f_j|^v\right)^{1/v}\right\|_{E(\dmn)},
\end{equation*}
where we use the obvious modification when $v=\infty$.
\end{defn} 

Then
$\{(E(\ell^v)(Q_r),\|\cdot\|_{E(\ell^v)(Q_r)})\}_{0<r<\infty}$
has the restriction and decomposition properties
for 
$E=L^p,\ WL^p,\ L_{p,\lambda}\ \text{or}\ WL_{p,\lambda}$,
since
\begin{equation*}
 \left(\sum_{j=1}^\infty |f_j|_{Q_r}|^v\right)^{1/v}
 =
 \left(\sum_{j=1}^\infty |f_j|^v\right)^{1/v}\bigg|_{Q_r}
 \quad\text{and}\quad
 \left(\sum_{j=1}^\infty |f_j\chi_r|^v\right)^{1/v}
 =
 \left(\sum_{j=1}^\infty |f_j|^v\right)^{1/v}\chi_r.
\end{equation*}

\begin{defn}\label{defn:Bs(E)-vector}
Let $p\in[1,\infty)$, $\lambda\in\R$, $u,v\in(0,\infty]$
and $w\in\cW^{u}$.
For 
\begin{equation*}
 E=L^p,\ WL^p,\ L_{p,\lambda}\ \text{or}\ WL_{p,\lambda},
\end{equation*}
let
$B_w^u(E(\ell^v))(\R^n)$ and $\dB_w^u(E(\ell^v))(\R^n)$ 
be the sets of all sequences $\sqc{f}$, $f_j\in E_Q(\R^n)$, such that
$\|\sqc{f}\|_{B_w^u(E(\ell^v))}<\infty$ 
and 
$\|\sqc{f}\|_{\dB_w^u(E(\ell^v))}<\infty$, 
respectively,
where
\begin{align*}
 \|\sqc{f}\|_{B_w^u(E(\ell^v))}
 &=\left\|w(r)\|\sqc{f}\|_{E(\ell^v)(Q_r)}\right\|_{L^u([1,\infty),dr/r)}, 
\\
 \|\sqc{f}\|_{\dB_w^u(E(\ell^v))}
 &=\left\|w(r)\|\sqc{f}\|_{E(\ell^v)(Q_r)}\right\|_{L^u((0,\infty),dr/r)}.
\end{align*}
\end{defn}

We consider sublinear operators $T$ as in Subsection~\ref{ss:SI} 
on vector-valued function spaces, that is,
\begin{equation*}
 T:\sqc{f}\mapsto\sqc{Tf}.
\end{equation*}
Then the following is an extension of Theorem \ref{thm:T:Bs(Mor)} 
to the vector-valued version.

\begin{thm}[\cite{KoMaNaSa2013RMC}]\label{thm:T:Bs(Mor)-vector}
Suppose that the parameters 
$\sigma,p,q,\lambda,\mu$ and $v$
satisfy
\begin{equation*}\label{T-parameter}
 \sigma\in[0,\infty), 
 p,q\in[1,\infty), \,
 \lambda\in[-n/p,0), \, \mu\in[-n/q,0) \ \text{and}\
 v\in(1,\infty].
\end{equation*}
Let $T$ be a sublinear operator defined on $L^1_{\comp}(\R^n)$ and
satisfy \eqref{subdiff} and 
\eqref{size} for some $\alpha\in[0,n)$ 
and $\Omega\in L^{\tp}(S^{n-1})$ with $\tp\in[1,\infty]$.
Assume one of the following conditions:
\begin{enumerate}
\item $\mu=\lambda+\alpha$, $\tp\ge p'$ and $\sigma+\lambda+\alpha<0$,
\item $\mu=\lambda+\alpha$, $\tp\ge q$ and $\sigma+\lambda+n/\tp+\alpha<0$.
\end{enumerate}
If $T$ can be extended to a bounded operator 
from $L_{p,\lambda}(\ell^v)(\R^n)$ to $L_{q,\mu}(\ell^v)(\R^n)$ 
or to $WL_{q,\mu}(\ell^v)(\R^n)$,
then $T$ can be further extended to a bounded operator 
from $B_{\sigma}(L_{p,\lambda}(\ell^v))(\R^n)$ 
to $B_{\sigma}(L_{q,\mu}(\ell^v))(\R^n)$
or to $B_{\sigma}(WL_{q,\mu}(\ell^v))(\R^n)$, respectively.
That is,
\begin{equation*}
 \left\|\left(\sum_{j=1}^\infty |Tf_j|^v\right)^{1/v}\right\|_{B_{\sigma}(L_{q,\mu})}
 \le C
 \left\|\left(\sum_{j=1}^\infty |f_j|^v\right)^{1/v}\right\|_{B_{\sigma}(L_{p,\lambda})},
 \quad
 \text{if $p\in(1,\infty)$},
\end{equation*}
and
\begin{equation*}
 \left\|\left(\sum_{j=1}^\infty |Tf_j|^v\right)^{1/v}\right\|_{B_{\sigma}(WL_{q,\mu})}
 \le C
 \left\|\left(\sum_{j=1}^\infty |f_j|^v\right)^{1/v}\right\|_{B_{\sigma}(L_{1,\lambda})},
 \quad
 \text{if $p=1$},
\end{equation*}
where we use the obvious modification when $v=\infty$.
The same conclusion holds 
for $\dB_{\sigma}(L_{p,\lambda}(\ell^v))(\R^n)$.
\end{thm}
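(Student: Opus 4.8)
The plan is to run the scalar argument underlying Theorem~\ref{thm:T:Bs(Mor)} essentially verbatim, after upgrading the pointwise size condition \eqref{size} to its $\ell^v$-valued form. The point of departure is the observation, recorded just before the statement, that $\{(L_{p,\lambda}(\ell^v)(Q_r),\|\cdot\|_{L_{p,\lambda}(\ell^v)(Q_r)})\}_{0<r<\infty}$ has the restriction and decomposition properties. Thus the whole $B_{\sigma}$-machinery is available for the family $E(\ell^v)$, and the only genuinely new task is to show that the vectorized operator $\vec T\colon\{f_j\}\mapsto\{Tf_j\}$ behaves, at the level of $\ell^v$-norms, like a sublinear operator obeying a size estimate. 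I would therefore fix $r\ge1$ and split each coordinate as $f_j=f_j\chi_{2r}+f_j(1-\chi_{2r})$.

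For the near part the assumed vector-valued boundedness from $L_{p,\lambda}(\ell^v)(\R^n)$ to $L_{q,\mu}(\ell^v)(\R^n)$ (or to $WL_{q,\mu}(\ell^v)(\R^n)$) gives directly
\[
 \|\{T(f_j\chi_{2r})\}\|_{L_{q,\mu}(\ell^v)(\R^n)}
 \le C\|\{f_j\chi_{2r}\}\|_{L_{p,\lambda}(\ell^v)(\R^n)}
 \le C\|\{f_j\}\|_{L_{p,\lambda}(\ell^v)(Q_{2r})},
\]
where the last step uses the restriction property. This reproduces the scalar near-part bound, now read off in $\ell^v$-norms.

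The key new step is the far part. For $x\in Q_r$ each $f_j(1-\chi_{2r})$ is supported away from $x$, so \eqref{size} applies coordinatewise; since $v\ge1$, Minkowski's inequality in $\ell^v$ yields the vector-valued size estimate
\[
 \left(\sum_{j}|T(f_j(1-\chi_{2r}))(x)|^v\right)^{1/v}
 \le C\int_{|x-y|\ge r}\frac{|\Omega(x-y)|}{|x-y|^{n-\alpha}}\,g(y)\,dy,
 \quad
 g(y)=\left(\sum_j|f_j(y)|^v\right)^{1/v}.
\]
The right-hand side no longer sees the vector structure: it is exactly the scalar far-part quantity for $g$, and $\|g\|_{L_{p,\lambda}(Q_{\rho})}=\|\{f_j\}\|_{L_{p,\lambda}(\ell^v)(Q_{\rho})}$ for every radius $\rho$. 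Hence the far-part estimate over the dyadic annuli $Q_{2^{k+1}r}\setminus Q_{2^k r}$, together with the summation of the geometric series governed by $\sigma+\lambda+\alpha<0$ in case (i) (respectively $\sigma+\lambda+n/\tp+\alpha<0$ in case (ii), where the extra $n/\tp$ arises from H\"older's inequality against $\Omega\in L^{\tp}(S^{n-1})$), is word-for-word the scalar computation, controlled by $\|g\|_{B_{\sigma}(L_{p,\lambda})}=\|\{f_j\}\|_{B_{\sigma}(L_{p,\lambda}(\ell^v))}$.

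Finally I would assemble the two pieces. As $T$ is merely sublinear, I invoke \eqref{subdiff} and the lattice property of the target $L_{q,\mu}(\ell^v)$ (equivalently $WL_{q,\mu}(\ell^v)$) exactly as in the scalar proof: coordinatewise $|Tf_j-T(f_j\chi_{2r})|\le C|T(f_j(1-\chi_{2r}))|$, so taking $\ell^v$-norms and the lattice property reduces the remainder to the far part already estimated. Taking the $L_{q,\mu}(Q_r)$-quasinorm, multiplying by $r^{-\sigma}$, and supremizing over $r\ge1$ gives boundedness on $B_{\sigma}(L_{p,\lambda}(\ell^v))$; replacing $r\ge1$ by all $r>0$ yields the $\dB_{\sigma}$ statement, and well-definedness of $\vec Tf=\lim_{R\to\infty}\{T(f_j\chi_R)\}$ follows as in Remark~\ref{rem:def Bs} from the same $\ell^v$ Cauchy estimate. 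The main obstacle is purely the bookkeeping in the far part, namely justifying the coordinatewise use of \eqref{size} and the Minkowski step uniformly in $j$ and then rerunning the scalar geometric-series estimate with $g$ in place of $|f|$, rather than any new analytic phenomenon.
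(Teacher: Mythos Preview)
The paper does not supply its own proof of this statement: Theorem~\ref{thm:T:Bs(Mor)-vector} is quoted from \cite{KoMaNaSa2013RMC} as a known result and then combined with Example~\ref{exmp:MC3} to obtain Theorem~\ref{thm:T:Bwu(Mor)-vector}. So there is no in-paper argument to compare against.

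That said, your sketch is the natural vector-valued version of the scalar argument underlying Theorem~\ref{thm:T:Bs(Mor)} and is correct. The only genuinely new ingredient over the scalar case is the Minkowski step in $\ell^v$ (legitimate since $v\ge1$), which collapses the coordinatewise size bound \eqref{size} to a scalar far-part inequality for $g=(\sum_j|f_j|^v)^{1/v}$; after that the dyadic-annulus summation governed by $\sigma+\lambda+\alpha<0$ (respectively $\sigma+\lambda+n/\tp+\alpha<0$) is literally the scalar computation with $g$ in place of $|f|$. The near part uses the assumed $L_{p,\lambda}(\ell^v)\to L_{q,\mu}(\ell^v)$ boundedness directly, and the recombination via \eqref{subdiff} together with the lattice property of $L_{q,\mu}(\ell^v)$ and $WL_{q,\mu}(\ell^v)$ goes through coordinatewise as you say. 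This is precisely the route one expects the cited reference to take.
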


Using Theorem~\ref{thm:T:Bs(Mor)-vector} and Example~\ref{exmp:MC3},
we have the following:

\begin{thm}\label{thm:T:Bwu(Mor)-vector}
Let
\begin{equation}\label{v-T-w}
\begin{cases}
 p,q\in[1,\infty), \,
 \lambda\in[-n/p,0), \, \mu\in[-n/q,0), v\in(1,\infty],
 u\in(0,\infty], 
\\
 w(r)=r^{-\sigma}\Theta(r^{\tau}),  \ \Theta\in\iTheta, \ \text{and} \
 \sigma,\tau\in(0,\infty) \ \text{with} \ \sigma>\tau.
\end{cases}
\end{equation}
Let $T$ be a sublinear operator defined on $L^1_{\comp}(\R^n)$ and
satisfy \eqref{subdiff} and 
\eqref{size} for some $\alpha\in[0,n)$ 
and $\Omega\in L^{\tp}(S^{n-1})$ with $\tp\in[1,\infty]$.
Assume one of the following conditions:
\begin{enumerate}
\item $\mu=\lambda+\alpha$, $\tp\ge p'$ and $\sigma+\lambda+\alpha<0$,
\item $\mu=\lambda+\alpha$, $\tp\ge q$ and $\sigma+\lambda+n/\tp+\alpha<0$.
\end{enumerate}
If $T$ can be extended to a bounded operator 
from $L_{p,\lambda}(\ell^v)(\R^n)$ to $L_{q,\mu}(\ell^v)(\R^n)$ 
or to $WL_{q,\mu}(\ell^v)(\R^n)$,
then $T$ can be further extended to a bounded operator 
from $B_w^u(L_{p,\lambda}(\ell^v))(\R^n)$ 
to $B_w^u(L_{q,\mu}(\ell^v))(\R^n)$
or to $B_w^u(WL_{q,\mu}(\ell^v))(\R^n)$, respectively.
That is,
\begin{equation*}
 \left\|\left(\sum_{j=1}^\infty |Tf_j|^v\right)^{1/v}\right\|_{B_w^u(L_{q,\mu})}
 \le C
 \left\|\left(\sum_{j=1}^\infty |f_j|^v\right)^{1/v}\right\|_{B_w^u(L_{p,\lambda})},
 \quad
 \text{if $p\in(1,\infty)$},
\end{equation*}
and
\begin{equation*}
 \left\|\left(\sum_{j=1}^\infty |Tf_j|^v\right)^{1/v}\right\|_{B_w^u(WL_{q,\mu})}
 \le C
 \left\|\left(\sum_{j=1}^\infty |f_j|^v\right)^{1/v}\right\|_{B_w^u(L_{1,\lambda})},
 \quad
 \text{if $p=1$},
\end{equation*}
where we use the obvious modification when $v=\infty$.
The same conclusion holds 
for $\dB_w^u(L_{p,\lambda}(\ell^v))(\R^n)$.
\end{thm}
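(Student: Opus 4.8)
The plan is to run exactly the argument of Theorem~\ref{thm:T:Bwu(Mor)} on the vector-valued family: interpolate the scalar-index result Theorem~\ref{thm:T:Bs(Mor)-vector} between two endpoints and then identify the interpolation space via Example~\ref{exmp:MC3}. Writing $w(r)=r^{-\sigma}\Theta(r^{\tau})$ as in \eqref{v-T-w}, I would set $\sigma_0=\sigma$ and $\sigma_1=\sigma-\tau$, so that $\sigma_0>\sigma_1>0$ and $w(r)=r^{-\sigma_0}\Theta(r^{\sigma_0-\sigma_1})$. Since the families $\{L_{p,\lambda}(\ell^v)(Q_r)\}$, $\{L_{q,\mu}(\ell^v)(Q_r)\}$ and the corresponding weak families have the restriction and decomposition properties (recorded just before Definition~\ref{defn:Bs(E)-vector}), Example~\ref{exmp:MC3}, an instance of Corollary~\ref{cor:IP1}, applies with $E$ replaced by these spaces and yields
\begin{align*}
 (B_{\sigma_0}(L_{p,\lambda}(\ell^v)),B_{\sigma_1}(L_{p,\lambda}(\ell^v)),\Theta)_{u,\,[1,\infty)}
 &= B_w^u(L_{p,\lambda}(\ell^v)),
\\
 (B_{\sigma_0}(L_{q,\mu}(\ell^v)),B_{\sigma_1}(L_{q,\mu}(\ell^v)),\Theta)_{u,\,[1,\infty)}
 &= B_w^u(L_{q,\mu}(\ell^v)),
\end{align*}
together with the analogues for the weak target and for the homogeneous $\dB$ spaces.

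First I would verify the hypotheses of Theorem~\ref{thm:T:Bs(Mor)-vector} at \emph{both} endpoints. They hold at $\sigma_0=\sigma$ by assumption, and since $0<\sigma_1<\sigma_0$ while the constraints in conditions (i) and (ii) read $\sigma+\lambda+\alpha<0$ and $\sigma+\lambda+n/\tp+\alpha<0$, lowering the index from $\sigma_0$ to $\sigma_1$ only tightens them; hence the same condition holds at $\sigma_1$. Thus Theorem~\ref{thm:T:Bs(Mor)-vector} gives boundedness of $T$ from $B_{\sigma_i}(L_{p,\lambda}(\ell^v))$ to $B_{\sigma_i}(L_{q,\mu}(\ell^v))$ (or to the weak target when $p=1$) for $i=0,1$, and likewise on the homogeneous scale.

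Next I would feed these two endpoint estimates into the $K$-functional mechanism described at the opening of Section~\ref{s:bdd}, with $A_i=B_{\sigma_i}(L_{p,\lambda}(\ell^v))$ and $B_i=B_{\sigma_i}(L_{q,\mu}(\ell^v))$. The operator satisfies \eqref{subdiff} componentwise, and for the sublinear case the lattice property \eqref{lattice} of the targets reduces to that of the scalar spaces $L_{q,\mu}$ and $WL_{q,\mu}$: since $\|\{f_j\}\|_{L_{q,\mu}(\ell^v)(Q_r)}=\big\|(\sum_j|f_j|^v)^{1/v}\big\|_{L_{q,\mu}(Q_r)}$, the componentwise bound $|Tf_j-Tf_{0,j}|\le C|Tf_{1,j}|$ gives $(\sum_j|Tf_j-Tf_{0,j}|^v)^{1/v}\le C(\sum_j|Tf_{1,j}|^v)^{1/v}$ by monotonicity of the $\ell^v$-norm. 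The mechanism then yields $K(r,T\{f_j\};B_0,B_1)\le C_T\,K(r,\{f_j\};A_0,A_1)$, so $T$ maps $(A_0,A_1,\Theta)_{u,\,[1,\infty)}$ boundedly into $(B_0,B_1,\Theta)_{u,\,[1,\infty)}$. Combined with the interpolation identities above, this is the asserted boundedness, and the weak-type and homogeneous versions follow identically; well-definedness of $T\{f_j\}=\lim_{R\to\infty}T(\{f_j\}\chi_R)$ is handled as in Remark~\ref{rem:def Bwu}.

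The one place I expect to need care, rather than pure bookkeeping, is the lattice property demanded by the sublinear $K$-functional step in the $\ell^v$-valued setting. As indicated, this is not a genuinely new obstacle: it follows from monotonicity of the $\ell^v$-norm together with the scalar lattice property of the Morrey and weak Morrey targets. Everything else simply repeats the scalar argument already used for Theorem~\ref{thm:T:Bwu(Mor)}.
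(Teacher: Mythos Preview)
Your proposal is correct and follows precisely the route the paper indicates: the paper's proof consists only of the sentence ``Using Theorem~\ref{thm:T:Bs(Mor)-vector} and Example~\ref{exmp:MC3}, we have the following,'' and you have simply unpacked this in detail, choosing the endpoints $\sigma_0=\sigma$, $\sigma_1=\sigma-\tau$, applying Corollary~\ref{cor:IP1} to the vector-valued families (whose restriction and decomposition properties are noted before Definition~\ref{defn:Bs(E)-vector}), and invoking the $K$-functional mechanism from the start of Section~\ref{s:bdd}. One minor remark: Example~\ref{exmp:MC3} is stated for $u\in(0,\infty)$, so for the case $u=\infty$ you are implicitly appealing directly to Corollary~\ref{cor:IP1} (or Example~\ref{exmp:MC2}), which your parenthetical already acknowledges.
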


\begin{cor}\label{cor:SI:Bwu(Mor)-vector}
Let $p,\lambda,u,v,\Theta,\sigma,\tau$ and $w$ be as in \eqref{v-T-w}.
Assume that $\sigma+\lambda<0$. 
If a singular integral operator $T$ is bounded 
on $L^p(\ell^v)(\R^n)$ with $p\in(1,\infty)$, 
then $T$ can be extended to a bounded operator 
on $B_w^u(L_{p,\lambda}(\ell^v))(\R^n)$.
If $T$ is bounded from $L^1(\ell^v)(\R^n)$ to $WL^1(\ell^v)(\R^n)$,
then $T$ can be extended to a bounded operator 
from $B_w^u(L_{1,\lambda}(\ell^v))(\R^n)$ 
to $B_w^u(WL_{1,\lambda}(\ell^v))(\R^n)$.
The same conclusion holds for $\dB_w^u(L_{p,\lambda}(\ell^v))(\R^n)$.
\end{cor}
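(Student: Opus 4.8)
The plan is to read this off Theorem~\ref{thm:T:Bwu(Mor)-vector} by the specialization $\alpha=0$, $\Omega\equiv1$, $\tp=\infty$, $q=p$ and $\mu=\lambda$. A singular integral operator is linear, hence satisfies \eqref{subdiff} with $C=1$, and \eqref{size} holds with $\alpha=0$ and $\Omega\equiv1\in L^{\infty}(S^{n-1})$, so that $\tp=\infty\ge p'$. Under these choices condition (i) of the theorem collapses to the single hypothesis $\sigma+\lambda<0$, which is assumed. Consequently, once we know that $T$ extends to a bounded operator on $L_{p,\lambda}(\ell^v)(\R^n)$ when $p\in(1,\infty)$ (resp.\ from $L_{1,\lambda}(\ell^v)(\R^n)$ to $WL_{1,\lambda}(\ell^v)(\R^n)$ when $p=1$), the conclusion for both $\Bwu$ and $\dBwu$ follows immediately from Theorem~\ref{thm:T:Bwu(Mor)-vector}.

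So the real work is the vector-valued Morrey (resp.\ weak Morrey) boundedness of $T$, which is the $\ell^v$-valued analogue of the classical Chiarenza--Frasca result quoted in Subsection~\ref{ss:SI}. First I would fix a cube $Q=Q(x_0,s)$, write $2^kQ=Q(x_0,2^ks)$, and for $\sqc{f}\in L_{p,\lambda}(\ell^v)(\R^n)$ split each coordinate as $f_j=f_j\chi_{2Q}+f_j(1-\chi_{2Q})$. For the local part the assumed $L^p(\ell^v)$-boundedness of $T$ gives
\begin{equation*}
 \left\|\Big(\sum_j|T(f_j\chi_{2Q})|^v\Big)^{1/v}\right\|_{L^p(Q)}
 \ls
 \left\|\Big(\sum_j|f_j|^v\Big)^{1/v}\right\|_{L^p(2Q)}
 \ls
 s^{\lambda+n/p}\,\|\sqc{f}\|_{L_{p,\lambda}(\ell^v)}.
\end{equation*}
For the far part, \eqref{size} with $\alpha=0$ and $\Omega\equiv1$ together with Minkowski's inequality in $\ell^v$ (valid since $v>1$) yields, for $x\in Q$,
\begin{equation*}
 \Big(\sum_j|T(f_j(1-\chi_{2Q}))(x)|^v\Big)^{1/v}
 \ls
 \int_{\R^n\setminus 2Q}\frac{\big(\sum_j|f_j(y)|^v\big)^{1/v}}{|x-y|^n}\,dy,
\end{equation*}
and splitting $\R^n\setminus2Q$ into the annuli $2^{k+1}Q\setminus2^kQ$ and estimating each by the Morrey norm produces, since $\lambda<0$, a convergent geometric series bounded pointwise by $s^{\lambda}\,\|\sqc{f}\|_{L_{p,\lambda}(\ell^v)}$. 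Taking the supremum over all cubes gives the $L_{p,\lambda}(\ell^v)$-bound; for $p=1$ the same splitting, with the weak-type estimate $L^1(\ell^v)\to WL^1(\ell^v)$ on the local part, gives the $WL_{1,\lambda}(\ell^v)$-bound.

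The main obstacle, exactly as in the scalar theory, is not this norm estimate but the well-definedness of the extended operator. For $\sqc{f}\in L_{p,\lambda}(\ell^v)(\R^n)$ one sets $T\sqc{f}=\big\{\lim_{R\to\infty}T(f_j\chi_R)\big\}_{j=1}^{\infty}$ and must check that each limit exists a.e.\ on $\R^n$ (or in $L^q_{\loc}$) and that the result lies in the appropriate space; this is precisely where \eqref{subdiff} is used, through the inequality $|Tf_j(x)-T(f_j\chi_R)(x)|\le C|T(f_j(1-\chi_R))(x)|$ of Remark~\ref{rem:def Mor}, whose $\ell^v$-sum is controlled by the far-part computation above and tends to $0$ as $R\to\infty$. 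With the Morrey (resp.\ weak Morrey) boundedness of $T$ thus established, the hypothesis of Theorem~\ref{thm:T:Bwu(Mor)-vector} is met with $\alpha=0$, $q=p$ and $\mu=\lambda$, and the theorem delivers the boundedness on $B_w^u(L_{p,\lambda}(\ell^v))(\R^n)$ and $\dB_w^u(L_{p,\lambda}(\ell^v))(\R^n)$ (resp.\ from the $L_{1,\lambda}$-spaces to the $WL_{1,\lambda}$-spaces), completing the proof.
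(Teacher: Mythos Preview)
Your proposal is correct and follows the paper's implicit approach: the corollary is stated in the paper without proof, as an immediate specialization of Theorem~\ref{thm:T:Bwu(Mor)-vector} with $\alpha=0$, $q=p$, $\mu=\lambda$, $\Omega\equiv1$ and $\tp=\infty$. You supply more detail than the paper does, in particular the vector-valued Chiarenza--Frasca step (passing from $L^p(\ell^v)$-boundedness to $L_{p,\lambda}(\ell^v)$-boundedness, and the weak-type analogue for $p=1$), which the paper leaves to the reader just as it does in the scalar Corollary~\ref{cor:SI:Bwu(Mor)}.
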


\begin{cor}\label{cor:FI:Bwu(Mor)-vector}
Let $\alpha\in(0,n)$, and 
let $p,q,\lambda,\mu,u,v,\Theta,\sigma,\tau$ and $w$ be as in \eqref{v-T-w}.
Assume that $\mu=\lambda+\alpha$, $q\le(\lambda/\mu)p$
and $\sigma+\lambda+\alpha<0$.
Then fractional integral operators $I_{\alpha}$ are bounded 
from $B_w^u(L_{p,\lambda}(\ell^v))(\R^n)$ 
to $B_w^u(L_{q,\mu}(\ell^v))(\R^n)$ if $p\in(1,\infty)$, 
and from $B_w^u(L_{1,\lambda}(\ell^v))(\R^n)$ 
to $B_w^u(WL_{q,\mu}(\ell^v))(\R^n)$ if $p=1$.
The same conclusion holds for $\dB_w^u(L_{p,\lambda}(\ell^v))(\R^n)$.
\end{cor}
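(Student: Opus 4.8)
The plan is to obtain the statement as a direct specialization of Theorem~\ref{thm:T:Bwu(Mor)-vector} to the operator $T=I_{\alpha}$, so that the whole argument reduces to (a) checking that $I_{\alpha}$ satisfies the structural hypotheses of that theorem and (b) supplying the single genuinely analytic input it requires. For (a), first I would note that $I_{\alpha}$ is linear, hence it satisfies \eqref{subdiff} with constant $C=1$ and the non-linear lattice caveat in Theorem~\ref{thm:T:Bwu(Mor)-vector} is vacuous. Next, reading off its kernel $|x-y|^{-(n-\alpha)}$, the operator $I_{\alpha}$ obeys the size estimate \eqref{size} with the given $\alpha\in(0,n)$ and with $\Omega\equiv1$. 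Since $\Omega\equiv1\in L^{\tp}(S^{n-1})$ for every $\tp\in[1,\infty]$, I may take $\tp=\infty$; then $\tp\ge p'$ holds automatically, and combined with the hypotheses $\mu=\lambda+\alpha$ and $\sigma+\lambda+\alpha<0$ this is exactly condition~(i) of Theorem~\ref{thm:T:Bwu(Mor)-vector}. The remaining parameter constraints collected in \eqref{v-T-w} coincide with those imposed in the corollary, so all the arithmetic hypotheses are in place.

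The hard part, and the only piece not already packaged by the interpolation machinery, is item (b): verifying the hypothesis of Theorem~\ref{thm:T:Bwu(Mor)-vector} that $I_{\alpha}$ extends to a bounded operator from $L_{p,\lambda}(\ell^v)(\R^n)$ to $L_{q,\mu}(\ell^v)(\R^n)$, and to $WL_{q,\mu}(\ell^v)(\R^n)$ in the endpoint case $p=1$. This is precisely where the condition $q\le(\lambda/\mu)p$ is used: it is the Adams--Spanne relation that already guarantees the scalar boundedness $I_{\alpha}\colon L_{p,\lambda}\to L_{q,\mu}$ recalled in Subsection~\ref{ss:SI} (see \cite{Adams1975,ChiarenzaFrasca1987}). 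To upgrade the scalar estimate to the $\ell^v$-valued one I would invoke a Fefferman--Stein type vector-valued inequality: the cleanest route is to start from the weighted $(A_{p,q})$ bounds for $I_{\alpha}$ and run a Rubio de Francia extrapolation to produce the $\ell^v$-valued inequality, then transfer it to the Morrey scale $L_{p,\lambda}(\ell^v)\to L_{q,\mu}(\ell^v)$; alternatively one may appeal directly to the known vector-valued fractional-integral inequalities on Morrey spaces. I expect this to be the main obstacle, since everything else is formal.

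Once (a) and (b) are assembled, Theorem~\ref{thm:T:Bwu(Mor)-vector} immediately delivers the boundedness of $I_{\alpha}$ from $B_w^u(L_{p,\lambda}(\ell^v))(\R^n)$ to $B_w^u(L_{q,\mu}(\ell^v))(\R^n)$ when $p\in(1,\infty)$ and to $B_w^u(WL_{q,\mu}(\ell^v))(\R^n)$ when $p=1$, which is exactly the asserted inequality for the quantity $(\sum_j|I_{\alpha}f_j|^v)^{1/v}$. The homogeneous case is handled verbatim: Theorem~\ref{thm:T:Bwu(Mor)-vector} is stated simultaneously for $\dB_w^u(L_{p,\lambda}(\ell^v))(\R^n)$, and none of the verifications above distinguish between the norm taken over $[1,\infty)$ and over $(0,\infty)$, so the conclusion for $\dB_w^u$ follows by the same application.
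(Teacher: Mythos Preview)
Your proposal is correct and matches the paper's intended argument: the corollary is stated without proof as a direct specialization of Theorem~\ref{thm:T:Bwu(Mor)-vector} with $T=I_{\alpha}$, $\Omega\equiv1$, and $\tp=\infty$, so that condition~(i) of that theorem is met under the present hypotheses. The paper does not spell out the vector-valued Morrey input $I_{\alpha}\colon L_{p,\lambda}(\ell^v)\to L_{q,\mu}(\ell^v)$ (resp.\ $WL_{q,\mu}(\ell^v)$) any more than you do; it is taken as known background in parallel with the scalar case recalled in Subsection~\ref{ss:SI}, and your sketch via weighted bounds and extrapolation is a standard way to justify it.
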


On fractional maximal operators $M_{\alpha}$, $\alpha\in[0,n)$,
in the case $\sigma+\lambda+\alpha=0$,
Theorem \ref{thm:M:Bs(Mor)} can be extended to 
the vector-valued version in only the case $v=\infty$,
see \cite[Theorem~15 and Remark~14]{KoMaNaSa2013RMC}.

\begin{cor}\label{cor:M:Bwu(Mor)-vector}
Let $\alpha\in[0,n)$, and 
let $p,q,\lambda,\mu,u,\Theta,\sigma,\tau$ and $w$ be as in \eqref{v-T-w}.
Assume that $\mu=\lambda+\alpha$ and $q\le(\lambda/\mu)p$.
Assume also one of the following conditions.
\begin{enumerate}
\item $\sigma+\lambda+\alpha<0$ and $v\in(1,\infty]$,
\item $\sigma+\lambda+\alpha=0$ and $v=\infty$.
\end{enumerate}
Then the operator $M_{\alpha}$ can be extended to a bounded operator 
from $B_w^u(L_{p,\lambda}(\ell^v))(\R^n)$ 
to $B_w^u(L_{q,\mu}(\ell^v))(\R^n)$ 
if $p\in(1,\infty)$,
and from $B_w^u(L_{1,\lambda}(\ell^v))(\R^n)$ 
to $B_w^u(WL_{q,\mu}(\ell^v))(\R^n)$ 
if $p=1$.
The same conclusion holds for $\dB_w^u(L_{p,\lambda}(\ell^v))(\R^n)$.
\end{cor}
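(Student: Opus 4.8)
The plan is to lift the known $B_\sigma$-level boundedness of $M_\alpha$ to the $B_w^u$ scale by the interpolation machinery, exactly as Theorem~\ref{thm:T:Bwu(Mor)-vector} was deduced from Theorem~\ref{thm:T:Bs(Mor)-vector}. Throughout we use that $M_\alpha$ is sublinear and nonnegative, so the vector-valued operator $T\colon\sqc{f}\mapsto\sqc{M_\alpha f}$ satisfies \eqref{subdiff} with $C=1$, and that the families $\{L_{p,\lambda}(\ell^v)(Q_r)\}$, $\{L_{q,\mu}(\ell^v)(Q_r)\}$ and $\{WL_{q,\mu}(\ell^v)(Q_r)\}$ enjoy the restriction, decomposition and lattice properties recorded after Definition~\ref{defn:E(Br)-vector}. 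The lattice property is what makes the operator-interpolation principle of Section~\ref{s:bdd} applicable to the non-linear operator $M_\alpha$.

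Under hypothesis (i), where $\sigma+\lambda+\alpha<0$ and $v\in(1,\infty]$, there is essentially nothing new: $M_\alpha$ satisfies \eqref{size} with $\Omega\equiv1$, so $\tp=\infty\ge p'$, and the vector-valued Fefferman--Stein inequality on Morrey spaces supplies the required boundedness of $M_\alpha$ from $L_{p,\lambda}(\ell^v)$ to $L_{q,\mu}(\ell^v)$ (to $WL_{q,\mu}(\ell^v)$ when $p=1$). Hence this case is precisely Theorem~\ref{thm:T:Bwu(Mor)-vector} with this choice of $\Omega$, and I would simply invoke it.

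The substance lies in hypothesis (ii), the critical case $\sigma+\lambda+\alpha=0$ with $v=\infty$, where the strict inequality required by Theorem~\ref{thm:T:Bwu(Mor)-vector} fails. Here I would set $\sigma_0=\sigma$ and $\sigma_1=\sigma-\tau$, both positive since $\sigma>\tau>0$, so that $w(r)=r^{-\sigma_0}\Theta(r^{\sigma_0-\sigma_1})$ has the form \eqref{w Theta}. The two endpoint statements for $M_\alpha$ from $B_{\sigma_i}(L_{p,\lambda}(\ell^\infty))$ to $B_{\sigma_i}(L_{q,\mu}(\ell^\infty))$ come from different sources: at $\sigma_0$ one has $\sigma_0+\lambda+\alpha=0$ and $v=\infty$, so the critical vector-valued extension of Theorem~\ref{thm:M:Bs(Mor)} (available only for $v=\infty$, cf.\ \cite[Theorem~15 and Remark~14]{KoMaNaSa2013RMC}) applies; at $\sigma_1$ one has $\sigma_1+\lambda+\alpha=-\tau<0$, so the subcritical Theorem~\ref{thm:T:Bs(Mor)-vector} applies (with $\Omega\equiv1$, $\tp=\infty$). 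With both endpoints in hand, I would apply Corollary~\ref{cor:IP1} (equivalently Example~\ref{exmp:MC3}) to identify $B_w^u(L_{p,\lambda}(\ell^\infty))$ with $(B_{\sigma_0}(L_{p,\lambda}(\ell^\infty)),B_{\sigma_1}(L_{p,\lambda}(\ell^\infty)),\Theta)_{u,\,[1,\infty)}$, and likewise on the target side with $L_{q,\mu}(\ell^\infty)$ or $WL_{q,\mu}(\ell^\infty)$; the extra requirement $\sigma_0,\sigma_1\in(0,\infty)$ needed when $u<\infty$ is met. The operator-interpolation principle then yields $K(r,T\sqc{f};B_0,B_1)\ls K(r,\sqc{f};A_0,A_1)$ for the corresponding couples, hence the asserted boundedness. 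The homogeneous spaces $\dB_w^u$ are handled identically, with $L^u((0,\infty),dr/r)$ in place of $L^u([1,\infty),dr/r)$.

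The main obstacle is precisely anchoring the interpolation at the critical exponent $\sigma_0+\lambda+\alpha=0$: this is the single place where the special endpoint feature of $M_\alpha$, available solely for $v=\infty$, is essential, and it is the reason the two hypotheses of the corollary cannot be merged. A secondary point to verify is that, for the weak-type target in the case $p=1$, the $K$-functional comparison for the non-linear $M_\alpha$ still goes through, which relies on the (vector-valued) lattice property \eqref{lattice} of $WL_{q,\mu}(\ell^\infty)(Q_r)$.
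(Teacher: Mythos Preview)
Your proposal is correct and follows essentially the same route the paper intends: case~(i) is a direct instance of Theorem~\ref{thm:T:Bwu(Mor)-vector} with $\Omega\equiv1$, while case~(ii) requires anchoring one interpolation endpoint at the critical exponent $\sigma_0=\sigma$ using the $v=\infty$ extension of Theorem~\ref{thm:M:Bs(Mor)} from \cite[Theorem~15 and Remark~14]{KoMaNaSa2013RMC}, then interpolating against a subcritical endpoint via Example~\ref{exmp:MC3}. The paper gives no explicit proof, but the sentence preceding the corollary signals exactly this strategy.
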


\section*{Acknowledgments}
The authors were partially supported by Grant-in-Aid 
for Scientific Research (C), No.~24540159, 
Japan Society for the Promotion of Science.


\end{document}